\pdfoutput=1
%to make pictures smaller: 0;/r.13pc/:
\documentclass[12pt,article]{amsart}
\usepackage{xcolor}
\usepackage[all,color]{xy}
\usepackage{amsmath,amssymb,amsthm,amscd,amsxtra,amsfonts,mathrsfs,graphicx,bm,slashed,array,mathtools,setspace,amsfonts,mathtools,enumitem}
\setlength\extrarowheight{5pt}
\input xy
\xyoption{all}
\newtheorem{Theorem}{Theorem}[section]
\newtheorem{Lemma}[Theorem]{Lemma} 
\newtheorem{Proposition}[Theorem]{Proposition}

\newtheorem{Example}[Theorem]{Example}
\newtheorem{Remark}[Theorem]{Remark}
\newtheorem{Question}[Theorem]{Question}
\newtheorem{Definition}[Theorem]{Definition}

\newtheorem{Notation}[Theorem]{Notation}
\newtheorem*{Theorem A}{Theorem A}
\newtheorem*{Theorem C}{Theorem C}

\newcommand*{\overbar}[1]{\mkern 1.5mu\overline{\mkern-1.5mu#1\mkern-1.5mu}\mkern 1.5mu}
\advance\evensidemargin-.5in
\advance\oddsidemargin-.5in
\advance\textwidth1in

\setcounter{tocdepth}{1}

\begin{document}
\author{Charlie Beil}
 %\thanks{}
 \address{Institut f\"ur Mathematik und Wissenschaftliches Rechnen, Universit\"at Graz, Heinrichstrasse 36, 8010 Graz, Austria.}
 \email{charles.beil@uni-graz.at}
\title[Nonnoetherian homotopy dimer algebras and NCCRs]{Nonnoetherian homotopy dimer algebras and noncommutative crepant resolutions}
\keywords{Non-noetherian ring, noncommutative algebraic geometry, tiled matrix ring, dimer algebra, noncommutative crepant resolution.}
 \subjclass[2010]{13C15, 14A20}
 \date{}
 
\begin{abstract}
Noetherian dimer algebras form a prominent class of examples of noncommutative crepant resolutions (NCCRs). 
However, dimer algebras which are noetherian are quite rare, and we consider the question: how close are nonnoetherian homotopy dimer algebras to being NCCRs? 
To address this question, we introduce a generalization of NCCRs to nonnoetherian tiled matrix rings. 
We show that if a noetherian dimer algebra is obtained from a nonnoetherian homotopy dimer algebra $A$ by contracting each arrow whose head has indegree 1, then $A$ is a noncommutative desingularization of its nonnoetherian center.
Furthermore, if any two arrows whose tails have indegree 1 are coprime, then $A$ is a nonnoetherian NCCR. 
\end{abstract}

\maketitle

\section{Introduction}

Let $(R,\mathfrak{m})$ be a local domain with an algebraically closed residue field $k$.
In the mid 1950's, Auslander, Buchsbaum, and Serre established the famous homological characterization of regularity in the case $R$ is noetherian \cite{AB,AB2,S}: 
$R$ is regular if and only if
$$\operatorname{gldim}R = \operatorname{pd}_R(k) = \operatorname{dim}R.$$

In 1984, Brown and Hajarnavis generalized this characterization to the setting of noncommutative noetherian rings which are module-finite over their centers \cite{BH}: such a ring $A$ with local center $R$ is said to be homologically homogeneous if for each simple $A$-module $V$,
\begin{equation*} \label{homhom3}
\operatorname{gldim}A = \operatorname{pd}_A(V) = \operatorname{dim}R.
\end{equation*}

In 2002, Van den Bergh placed this notion in the context of derived categories with the introduction of noncommutative crepant resolutions (henceforth NCCRs).
Specifically, a homologically homogeneous ring $A$ is a (local) NCCR if $R$ is a normal Gorenstein domain and $A$ is the endomorphism ring of a finitely generated reflexive $R$-module \cite[Definition 4.1]{V}.\footnote{A proper birational map $f: Y \to X$ from a non-singular variety $Y$ to a Gorenstein singularity $X$ is a crepant resolution if $f^* \omega_X = \omega_Y$. 
%Bondal and Orlov conjectured that all crepant resolutions of $X$ have equivalent bounded derived categories of coherent sheaves \cite[Conjecture --]{BO1} \cite{BO2}, and Bridgeland proved this conjecture in dimension 3 \cite{Br}.
%This conjecture was proven in dimension 3 by Bridgeland \cite{Br}, and for quotient singularities by Bridgeland-King-Reid \cite{BKR}. 
%Bridgeland proved this conjecture in dimension 3 \cite{Br}, and Bridgeland-King-Reid proved the conjecture for certain quotient singularities \cite{BKR}.
Given an NCCR $A$ of $R = k[X]$, Van den Bergh conjectured that the bounded derived category of $A$-modules is equivalent to the bounded derived category of coherent sheaves on $Y$ \cite[Conjecture 4.6]{V}.}

A prominent class of NCCRs are noetherian dimer algebras on a torus (Definition \ref{dimer def}) \cite{Br,Bo,D,B4,B7}. 
In fact, every 3-dimensional affine toric Gorenstein singularity admits an NCCR given by such a dimer algebra \cite{G,IU}.
Although dimer quivers may be defined on any compact surface, in this article we consider the case where the surface is a torus. 

A \textit{homotopy algebra} is the quotient of a dimer algebra by homotopy-like relations on the paths in its quiver; a dimer algebra coincides with its homotopy algebra if and only if it is noetherian \cite[Theorem 1.1]{B4}. 
Homotopy algebras, just like noetherian dimer algebras, are tiled matrix rings over polynomial rings.
The homotopy algebra of a nonnoetherian dimer algebra is also nonnoetherian and an infinitely generated module over its nonnoetherian center. 
Here we consider the question:
\begin{center}
\textit{How close are nonnoetherian homotopy algebras to being NCCRs?} 
\end{center}

To address this question, we consider a relatively small but important class of nonnoetherian homotopy algebras: Let $A$ be a homotopy algebra with quiver $Q$ such that a noetherian dimer algebra is obtained by contracting each arrow of $Q$ whose head has indegree 1, and no arrow of $Q$ has head and tail of indegree both 1. 
Denote by $R$ the center of $A$.
The scheme $\operatorname{Spec}R$ has a unique closed point $\mathfrak{m}_0$ of positive geometric dimension \cite[Theorem 1.1]{B6}.
Furthermore, $\mathfrak{m}_0$ is the unique closed point for which the localizations 
$$R_{\mathfrak{m}_0} \ \ \ \text{ and } \ \ \ A_{\mathfrak{m}_0} := A \otimes_R R_{\mathfrak{m}_0}$$ 
are nonnoetherian \cite[Section 3]{B6}, \cite[Theorem 3.4]{B3}.
An initial answer to our question appears to be negative: 
\begin{itemize}
 \item $A_{\mathfrak{m}_0}$ has infinite global dimension (Proposition \ref{infinite global dimension}).
 \item $A_{\mathfrak{m}_0}$ is typically not the endomorphism ring of a module over its center.
\end{itemize}

However, the underlying structure of $A_{\mathfrak{m}_0}$ is more subtle.
To uncover this structure, we introduce a generalization of homological homogeneity and NCCRs for nonnoetherian tiled matrix rings.
Let $A$ be a nonnoetherian tiled matrix ring with local center $(R,\mathfrak{m})$.
Firstly, we introduce
\begin{itemize}
 \item the \textit{cycle algebra} $S$ of $A$, which is a commutative algebra that contains the center $R$ as a subalgebra (but in general is not a subalgebra of $A$); and
 \item the \textit{cyclic localization} $A_{\mathfrak{q}}$ of $A$ at a prime ideal $\mathfrak{q}$ of $S$.
\end{itemize}

We then say $A$ is \textit{cycle regular} if for each $\mathfrak{q} \in \operatorname{Spec}S$ minimal over $\mathfrak{m}$ and each simple $A_{\mathfrak{q}}$-module $V$, we have
$$\operatorname{gldim}A_{\mathfrak{q}} = \operatorname{pd}_{A_{\mathfrak{q}}}(V) = \operatorname{dim}S_{\mathfrak{q}}.$$ 
Furthermore, we say $A$ is a \textit{nonnoetherian NCCR} if the cycle algebra $S$ is a noetherian normal Gorenstein domain, $A$ is cycle regular, and for each $\mathfrak{q} \in \operatorname{Spec}S$ minimal over $\mathfrak{m}$, $A_{\mathfrak{q}}$ is the endomorphism ring of a reflexive module over its center $Z(A_{\mathfrak{q}})$. 

Our main result is the following. 
\newpage

\begin{Theorem} \label{yesterday} (Theorems \ref{height}, \ref{big theorem 1}, \ref{NCCR theorem}.)
Let $A$ be a nonnoetherian homotopy algebra such that a noetherian dimer algebra is obtained by contracting each arrow whose head has indegree 1, and no arrow of $A$ has head and tail of indegree both 1. 
Then
\begin{enumerate} 
 \item $A_{\mathfrak{m}_0}$ is cycle regular.
  \item For each prime $\mathfrak{q}$ of the cycle algebra $S$ which is minimal over $\mathfrak{m}_0$, we have
\begin{equation*} \label{ilovezephy}
\operatorname{gldim} A_{\mathfrak{q}} = \dim S_{\mathfrak{q}} = \operatorname{ght}_R(\mathfrak{m}_0) = 1 < 3 = \operatorname{ht}_R(\mathfrak{m}_0) = \operatorname{dim}R_{\mathfrak{m}_0},
\end{equation*}
where $\operatorname{ght}_R(\mathfrak{m}_0)$ and $\operatorname{ht}_R(\mathfrak{m}_0)$ denote the geometric height and height of $\mathfrak{m}_0$ in $R$ respectively.
Furthermore, for each prime $\mathfrak{q}$ of $S$ minimal over $\mathfrak{q} \cap R$,
$$\operatorname{gldim} A_{\mathfrak{q}} = \operatorname{ght}_R(\mathfrak{q}\cap R).$$ 
 \item If the arrows whose tails have indegree 1 are pairwise coprime, then $A_{\mathfrak{m}_0}$ is a nonnoetherian NCCR.
\end{enumerate}
\end{Theorem}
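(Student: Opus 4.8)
The plan is to reduce all three assertions to two structural facts about $A$: first, that the cycle algebra $S$ is the center of the contracted noetherian dimer algebra $\Lambda$, hence a $3$-dimensional affine toric Gorenstein normal domain; and second, that the cyclic localization of $A$ at any prime of $S$ minimal over $\mathfrak{m}_0$ is a hereditary tiled order over the corresponding discrete valuation ring. Granting these, cycle regularity and the height equalities follow from classical order theory over Dedekind-like bases, and the nonnoetherian NCCR property follows once the coprimality hypothesis is used to identify the endomorphism ring.

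For the first fact, I would show that contracting an arrow $a$ whose head has indegree $1$ does not change the monoid of classes of cyclic paths: every cyclic path through the head of $a$ factors through $a$, hence corresponds canonically to a cyclic path through the tail of $a$. Therefore $A$ and $\Lambda$ share the same cycle algebra $S$. Since $\Lambda$ is a noetherian dimer algebra on a torus it is an NCCR \cite{B4,B7}, so $Z(\Lambda)$, and with it $S$, is a noetherian normal Gorenstein domain of Krull dimension $3$; this is exactly the first clause in the definition of a nonnoetherian NCCR. For the height statements in (2), I would use the analysis of $\mathfrak{m}_0$ in \cite{B6}: $\mathfrak{m}_0$ is a positive-dimensional closed point whose geometric locus in the depiction furnished by $S$ is a divisor, so $\operatorname{ght}_R(\mathfrak{m}_0) = 1$ and every prime $\mathfrak{q}$ of $S$ lying minimally over $\mathfrak{m}_0$ has $\operatorname{ht}_S(\mathfrak{q}) = 1$, making $S_{\mathfrak{q}}$ a DVR with $\dim S_{\mathfrak{q}} = 1$. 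On the other hand $R_{\mathfrak{m}_0}$ is a $3$-dimensional local domain because $S$ is a $3$-dimensional integral overring of $R$ that agrees with $R$ away from $\mathfrak{m}_0$; hence $\operatorname{ht}_R(\mathfrak{m}_0) = \dim R_{\mathfrak{m}_0} = 3$.

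For the second fact and the remaining global-dimension statements, I would show that for $\mathfrak{q}$ minimal over $\mathfrak{m}_0$ the cyclic localization $A_{\mathfrak{q}}$ is a connected tiled order over the DVR $S_{\mathfrak{q}}$, with center $S_{\mathfrak{q}}$; in particular $A_{\mathfrak{q}}$ is noetherian and module-finite over its center, so cyclic localization repairs the non-noetherianity recorded in Proposition \ref{infinite global dimension}. Because the exponent matrix of this order is controlled by the arrows incident to the split vertices, and each such arrow has a head or a tail of indegree $1$ while (by the standing hypothesis) no arrow has both, the defining relations among them do not overlap, so each simple $A_{\mathfrak{q}}$-module has projective dimension exactly $1$. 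This gives $\operatorname{gldim} A_{\mathfrak{q}} = \operatorname{pd}_{A_{\mathfrak{q}}}(V) = 1 = \dim S_{\mathfrak{q}}$ for every simple $V$, which is cycle regularity at $\mathfrak{m}_0$ and the equality in (2). For a prime $\mathfrak{q}$ of $S$ minimal over $\mathfrak{q}\cap R \neq \mathfrak{m}_0$, the algebra $A$ is noetherian at $\mathfrak{q}\cap R$ and the split arrows, which are non-units only at $\mathfrak{m}_0$, become units after localizing, so $A_{\mathfrak{q}}$ is Morita equivalent to a localization of $\Lambda$; homological homogeneity of $\Lambda$ then gives $\operatorname{gldim} A_{\mathfrak{q}} = \dim S_{\mathfrak{q}} = \operatorname{ht}_S(\mathfrak{q}) = \operatorname{ght}_R(\mathfrak{q}\cap R)$, completing (2).

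For (3), the cyclic localization $A_{\mathfrak{q}}$ is always a tiled order over the DVR $S_{\mathfrak{q}} = Z(A_{\mathfrak{q}})$, and a tiled order is the endomorphism ring of the reflexive module $M = \bigoplus_i A_{\mathfrak{q}}e_i$ given by the direct sum of its columns precisely when its exponent matrix is additively closed, i.e.\ $\lambda_{kj} = \max_i(\lambda_{ik} - \lambda_{ij})$ for all $j,k$. I would check that the only way this identity can fail is through a common factor of two arrows whose tails have indegree $1$, so that the coprimality hypothesis forces $A_{\mathfrak{q}} = \operatorname{End}_{S_{\mathfrak{q}}}(M)$ with $M$ reflexive over $Z(A_{\mathfrak{q}}) = S_{\mathfrak{q}}$; together with the first two facts this verifies all three clauses of the definition of a nonnoetherian NCCR. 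The hard part will be this second structural fact: pinning down the tiled-order structure of $A_{\mathfrak{q}}$, proving that its global dimension collapses uniformly from the ambient $\operatorname{ht}_R(\mathfrak{m}_0) = 3$ down to the geometric height $1$ once one passes to $S$, and showing that the coprimality condition is exactly the obstruction to $A_{\mathfrak{q}}$ being an endomorphism ring. Proposition \ref{infinite global dimension} shows that none of this can be arranged before cyclic localization, so the argument must genuinely exploit the gap between $R$ and $S$.
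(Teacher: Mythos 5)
Your ``first fact'' --- that the cycle algebra $S$ of $A$ coincides with the cycle algebra (hence the center) of the contracted cancellative dimer algebra $A'$, and that $S$ is therefore a $3$-dimensional noetherian normal Gorenstein domain --- is correct and is essentially Lemma \ref{over} and Proposition \ref{ngd} in the paper. Likewise, the identification of $\operatorname{ght}_R(\mathfrak{m}_0)=1$ via the depiction $S$ and of $\operatorname{ht}_R(\mathfrak{m}_0)=\dim R_{\mathfrak{m}_0}=3$ matches Theorem \ref{height}, modulo the nontrivial step of showing that the minimal primes over $\mathfrak{m}_0$ in $S$ really all have height $1$ (Propositions \ref{prime2} and \ref{principal}).

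The ``second fact,'' however, is false as stated and this is a genuine gap. You claim that for $\mathfrak{q}$ minimal over $\mathfrak{m}_0$, the cyclic localization $A_{\mathfrak{q}}$ is a tiled order over the DVR $S_{\mathfrak{q}}$, with $Z(A_{\mathfrak{q}})=S_{\mathfrak{q}}$ and $A_{\mathfrak{q}}$ module-finite (hence noetherian) over its center. None of this holds. By Proposition \ref{center}, $Z(A_{\mathfrak{q}})$ is the strictly smaller ring $\tilde{R}=(k+\mathfrak{m}_D)_{\mathfrak{m}_D}+\mathfrak{q}S_{\mathfrak{q}}$, and the corner ring $e_iA_{\mathfrak{q}}e_i$ at the vertex $i=\operatorname{t}(a)$ of the relevant arrow $a\in Q_1^{\operatorname{t}}$ equals $\tilde{R}$, not $S_{\mathfrak{q}}$ (compare Example \ref{second example}, where the $(3,3)$ entry of $A_{\mathfrak{q}_1}$ is $(k+\mathfrak{q}_1)_{\mathfrak{q}_1}+\mathfrak{q}_1 S_{\mathfrak{q}_1}$). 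Consequently $A_{\mathfrak{q}}$ is not a tiled $S_{\mathfrak{q}}$-order and remains infinitely generated over its center, so the classical structure theory of hereditary orders over DVRs and the ``additively closed exponent matrix'' criterion cannot be invoked. This undermines your route to all three conclusions: the finite-global-dimension statement has to be established directly (the paper does this via vertex-invertibility, the free-module Lemma \ref{free}, and the tailored Proposition \ref{S regular local}, arriving at $\operatorname{pd}_{A_{\mathfrak{q}}}V=1$ for each simple in Proposition \ref{cycle hom}); and for (3) the reflexive module is $A_{\mathfrak{q}}e_i$ for a single $i$, taken reflexive over the nonnoetherian $\tilde{R}$ rather than over $S_{\mathfrak{q}}$, with the endomorphism-ring identification carried out by explicit $\operatorname{Hom}_{\tilde{R}}$ computations (Lemmas \ref{someh}, \ref{endofstory}, Proposition \ref{gameover}) in which the coprimality hypothesis enters through Lemma \ref{center2}, not through an exponent-matrix condition. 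The right intuition is there --- cyclic localization does collapse the global dimension from $3$ to $1$ and coprimality is exactly the obstruction to being an endomorphism ring --- but the mechanism is not order theory over $S_{\mathfrak{q}}$, because the obstruction to noetherianity survives inside $Z(A_{\mathfrak{q}})=\tilde{R}$.
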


The second claim suggests that geometric height, rather than height, is the `right' notion of codimension for nonnoetherian commutative rings, noting that geometric height and height coincide for noetherian rings \cite[Theorem 3.8]{B5}. 
An example of a dimer algebra which is a nonnoetherian NCCR is given in Figure \ref{second figure}, and described in Example \ref{second example}.

This work is a continuation of \cite{B3}, where the author considered localizations $A_{\mathfrak{p}} := A \otimes_R R_{\mathfrak{p}}$ of nonnoetherian dimer and homotopy algebras $A$  at points $\mathfrak{p} \in \operatorname{Spec}R$ away from $\mathfrak{m}_0$. 
We focus exclusively on homotopy algebras here since the localization of a dimer algebra at $\mathfrak{m}_0$ is much less tractable than its homotopy counterpart; for example, any dimer algebra satisfying the assumptions of Theorem \ref{yesterday} has a free subalgebra, whereas its homotopy algebra does not \cite{B4}.

In future work we hope to explore the implications of the definitions we have introduced in terms of derived categories and tilting theory, and to study larger classes of nonnoetherian homotopy algebras, as well as other classes of tiled matrix rings.

\begin{figure}
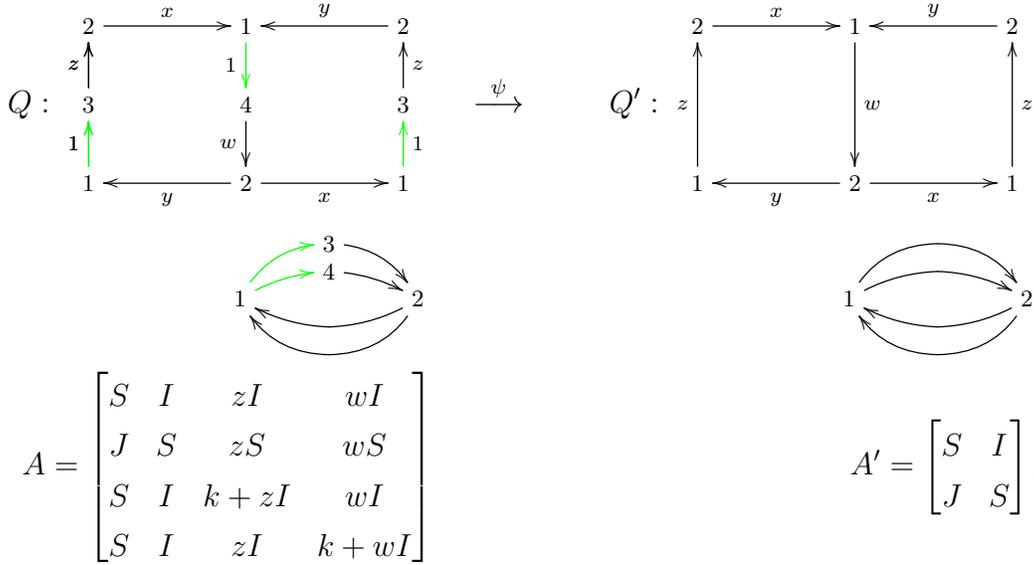

\begin{align*}
Q: \xy 0;/r.31pc/:
(-16,8)*+{\text{\scriptsize{$2$}}}="1";
(0,8)*+{\text{\scriptsize{$1$}}}="2";
(16,8)*+{\text{\scriptsize{$2$}}}="3";
(-16,-8)*+{\text{\scriptsize{$1$}}}="4";
(0,-8)*+{\text{\scriptsize{$2$}}}="5";
(16,-8)*+{\text{\scriptsize{$1$}}}="6";
(-16,0)*+{\text{\scriptsize{$3$}}}="7";
(16,0)*+{\text{\scriptsize{$3$}}}="8";
(0,0)*+{\text{\scriptsize{$4$}}}="9";
{\ar^x"1";"2"};{\ar_y"3";"2"};
{\ar^y"5";"4"};{\ar_x"5";"6"};
{\ar@[green]^1"4";"7"};{\ar^z"7";"1"};
{\ar@[green]^1"4";"7"};{\ar^z"7";"1"};
{\ar@[green]_1"6";"8"};{\ar_z"8";"3"};
{\ar@[green]_1"2";"9"};{\ar_w"9";"5"};
\endxy
& \ \ \ \stackrel{\psi}{\longrightarrow} \ \ \ &
Q': \xy 0;/r.31pc/:
(-16,8)*+{\text{\scriptsize{$2$}}}="1";
(0,8)*+{\text{\scriptsize{$1$}}}="2";
(16,8)*+{\text{\scriptsize{$2$}}}="3";
(-16,-8)*+{\text{\scriptsize{$1$}}}="4";
(0,-8)*+{\text{\scriptsize{$2$}}}="5";
(16,-8)*+{\text{\scriptsize{$1$}}}="6";
{\ar^x"1";"2"};{\ar_y"3";"2"};
{\ar^y"5";"4"};{\ar_x"5";"6"};
{\ar^z"4";"1"};
{\ar_z"6";"3"};
{\ar^w"2";"5"};
\endxy
\\
\ \ \ \ \ \ \ \xy 0;/r.35pc/:
(-8,0)*+{\text{\scriptsize{$1$}}}="1";
(8,0)*+{\text{\scriptsize{$2$}}}="2";
(0,5)*+{\text{\scriptsize{$3$}}}="3";
(0,2.5)*+{\text{\scriptsize{$4$}}}="4";
(0,-2.5)*{}="5";
(0,-5)*{}="6";
{\ar@/^/@[green]"1";"3"};
{\ar@/^/"3";"2"};
{\ar@/^.2pc/@[green]"1";"4"};
{\ar@/^.2pc/"4";"2"};
{\ar@{-}@/^.2pc/"2";"5"};
{\ar@/^.2pc/"5";"1"};
{\ar@{-}@/^/"2";"6"};
{\ar@/^/"6";"1"};
\endxy
&&
\ \ \ \ \ \ \ \xy 0;/r.35pc/:
(-8,0)*+{\text{\scriptsize{$1$}}}="1";
(8,0)*+{\text{\scriptsize{$2$}}}="2";
(0,5)*{}="3";
(0,2.5)*{}="4";
(0,-2.5)*{}="5";
(0,-5)*{}="6";
{\ar@{-}@/^/"1";"3"};
{\ar@/^/"3";"2"};
{\ar@{-}@/^.2pc/"1";"4"};
{\ar@/^.2pc/"4";"2"};
{\ar@{-}@/^.2pc/"2";"5"};
{\ar@/^.2pc/"5";"1"};
{\ar@{-}@/^/"2";"6"};
{\ar@/^/"6";"1"};
\endxy
\\
A = \left[ \begin{matrix}
S & I & zI & wI \\ 
J & S & zS & wS \\
S & I & k + zI & wI \\
S & I & zI & k + wI
\end{matrix} \right]
& & A' = \left[ \begin{matrix} S & I \\ J & S \end{matrix} \right].
\end{align*}
\caption{(Example \ref{second example}.)  The homotopy algebra $A$ is a nonnoetherian NCCR.
The quivers $Q$ and $Q'$ on the top line are each drawn on a torus, and the two contracted arrows of $Q$ are drawn in green.
Here, $S = k[xz,yz,xw,yw]$ is the coordinate ring for the quadric cone, considered as a subalgebra of the polynomial ring $k[x,y,z,w]$, and $I$ and $J$ are the respective $S$-modules $(x,y)S$ and $(z,w)S$.} 
\label{second figure}
\end{figure}

\section{Preliminary definitions} \label{dimer algebras}

Throughout, let $k$ be an algebraically closed field, let $S$ be an integral domain and a $k$-algebra, and let $R$ be a (possibly nonnoetherian) subalgebra of $S$.
Denote by $\operatorname{Max}S$, $\operatorname{Spec}S$, and $\operatorname{dim}S$ the maximal spectrum (or variety), prime spectrum (or affine scheme), and Krull dimension of $S$ respectively; similarly for $R$. 
For a subset $I \subset S$, set $\mathcal{Z}(I) := \left\{ \mathfrak{n} \in \operatorname{Max}S \ | \ \mathfrak{n} \supseteq I \right\}$.

A quiver $Q =(Q_0,Q_1,\operatorname{t},\operatorname{h})$ consists of a vertex set $Q_0$, an arrow set $Q_1$, and head and tail maps $\operatorname{h}, \operatorname{t}: Q_1 \to Q_0$. 
Denote by $\operatorname{deg}^+ i$ the indegree of a vertex $i \in Q_0$; by $kQ$ the path algebra of $Q$; and by $e_i \in kQ$ the idempotent at vertex $i$. 
Path concatenation is read right to left.
By module and global dimension we mean left module and left global dimension, unless stated otherwise.
In a fixed matrix ring, denote by $e_{ij}$ the matrix with a $1$ in the $ij$-th slot and zeros elsewhere, and set $e_i := e_{ii}$. 

The following definitions were introduced in \cite{B5} to formulate a theory of geometry for nonnoetherian rings with finite Krull dimension. 

\begin{Definition} \label{def dep} \rm{\cite[Definition 3.1]{B5}
\begin{itemize}
 \item We say $S$ is a \textit{depiction} of $R$ if $S$ is a finitely generated $k$-algebra, the morphism
$$\iota_{S/R} : \operatorname{Spec}S \rightarrow \operatorname{Spec}R, \ \ \ \ \mathfrak{q} \mapsto \mathfrak{q} \cap R,$$ 
is surjective, and
$$\left\{ \mathfrak{n} \in \operatorname{Max}S \ | \ R_{\mathfrak{n}\cap R} = S_{\mathfrak{n}} \right\} = \left\{ \mathfrak{n} \in \operatorname{Max}S \ | \ R_{\mathfrak{n} \cap R} \text{ is noetherian} \right\} \not = \emptyset.$$
 \item The \textit{geometric height} of $\mathfrak{p} \in \operatorname{Spec}R$ is the minimum
$$\operatorname{ght}(\mathfrak{p}) := \operatorname{min} \left\{ \operatorname{ht}_S(\mathfrak{q}) \ | \ \mathfrak{q} \in \iota^{-1}_{S/R}(\mathfrak{p}), \ S \text{ a depiction of } R \right\}.$$
The \textit{geometric dimension} of $\mathfrak{p}$ is
$$\operatorname{gdim} \mathfrak{p} := \operatorname{dim}R - \operatorname{ght}(\mathfrak{p}).$$
 \end{itemize}
} \end{Definition}

The algebras that we will consider in this article are called homotopy (dimer) algebras.
Dimer algebras are a type of quiver with potential, and were introduced in string theory \cite{BFHMS} (see also \cite{BD}).
Homotopy algebras are special quotients of dimer algebras, and were introduced in \cite{B2}. 

\begin{Definition} \label{dimer def} \rm{ \

$\bullet$ Let $Q$ be a finite quiver whose underlying graph $\overbar{Q}$ embeds into a two-dimensional real torus $T^2$, such that each connected component of $T^2 \setminus \overbar{Q}$ is simply connected and bounded by an oriented cycle, called a \textit{unit cycle}.\footnote{In contexts such as cluster algebras, $\overbar{Q}$ may be embedded into any compact surface; see for example \cite{BKM}.}\textsuperscript{,}\footnote{Note that for any vertex $i \in Q_0$, the indegree and outdegree of $i$ are equal.}\textsuperscript{,}\footnote{In \cite{B1}, it useful to allow length $1$ unit cycles.  Consequently, it is possible for a length $1$ path $a \in Q_1$ to equal a vertex modulo $I$; in this case, $a$ is called a `pseudo-arrow' rather than an `arrow', in order to avoid modifying standard definitions such as perfect matchings.} 
The \textit{dimer algebra} of $Q$ is the quiver algebra $kQ/I$ with relations
$$I := \left\langle p - q \ | \ \exists \ a \in Q_1 \text{ such that } pa \text{ and } qa \text{ are unit cycles} \right\rangle \subset kQ,$$
where $p$ and $q$ are paths.

Since $I$ is generated by certain differences of paths, we may refer to a path modulo $I$ as a \textit{path} in the dimer algebra $kQ/I$.

$\bullet$ Two paths $p,q \in kQ/I$ form a \textit{non-cancellative pair} if $p \not = q$, and there is a path $r \in kQ/I$ such that 
$$rp = rq \not = 0 \ \ \text{ or } \ \ pr = qr \not = 0.$$
$kQ/I$ and $Q$ are called \textit{non-cancellative} if there is a non-cancellative pair; otherwise they are called \textit{cancellative}. 
By \cite[Theorem 1.1]{B4}, $kQ/I$ is noetherian if and only if it is cancellative.

$\bullet$ We call the quotient algebra 
$$A := (kQ/I)/\left\langle p - q \ | \ p,q \text{ is a non-cancellative pair} \right\rangle$$
the \textit{homotopy (dimer) algebra} of $Q$.\footnote{A dimer algebra coincides with its homotopy algebra if and only if its quiver is cancellative.}
(For the definition of a homotopy algebra on a general surface, see \cite{B2}.)

$\bullet$ Let $A$ be a (homotopy) dimer algebra with quiver $Q$.
 \begin{itemize}
  \item[--] A \textit{perfect matching} $D \subset Q_1$ is a set of arrows such that each unit cycle contains precisely one arrow in $D$.
  \item[--] A \textit{simple matching} $D \subset Q_1$ is a perfect matching such that $Q \setminus D$ supports a simple $A$-module of dimension $1^{Q_0}$ (that is, $Q \setminus D$ contains a cycle that passes through each vertex of $Q$). 
 Denote by $\mathcal{S}$ the set of simple matchings of $A$. 
 \end{itemize}
}\end{Definition} 

\section{Cycle algebra and nonnoetherian NCCRs}

In this section we introduce the cycle algebra, cyclic localization, and nonnoetherian NCCRs.
Let $B$ be an integral domain and a $k$-algebra. 
Let
\begin{equation*}
A = \left[ A^{ij} \right] \subset M_d(B)
\end{equation*} 
be a tiled matrix algebra; that is, each diagonal entry $A^i := A^{ii}$ is a unital subalgebra of $B$.
Denote by $Z = Z(A)$ the center of $A$.

\begin{Definition} \rm{
Set
\begin{equation*}
R := k\left[\cap_{i = 1}^d A^i \right] \ \ \ \text{ and } \ \ \ S := k\left[ \cup_{i = 1}^d A^i \right].
\end{equation*}
We call $S$ the \textit{cycle algebra} of $A$.
Furthermore, for $\mathfrak{q} \in \operatorname{Spec}S$, set
\begin{equation*}
A_{\mathfrak{q}} := \left\langle \left[ \begin{matrix} A^{1}_{\mathfrak{q} \cap A^{1}} & A^{12} & \cdots & A^{1d} \\ A^{21} & A^{2}_{\mathfrak{q} \cap A^{2}} && \\
\vdots & & \ddots & \\ A^{d1} & & & A^{d}_{\mathfrak{q} \cap A^{d}} \end{matrix} \right] \right\rangle \subset M_d(\operatorname{Frac}B).
\end{equation*}
We call $A_{\mathfrak{q}}$ the \textit{cyclic localization} of $A$ at $\mathfrak{q}$.
}\end{Definition}

Note that $R$ and $S$ are integral domains since they are subalgebras of $B$.
The following definitions aim to generalize homological homogeneity and NCCRs to the nonnoetherian setting. 

\begin{Definition} \label{NCCR2} \rm{
Suppose $R$ is a local domain with unique maximal ideal $\mathfrak{m}$.
\begin{itemize}
 \item We say $A$ is \textit{cycle regular} if for each $\mathfrak{q} \in \operatorname{Spec}S$ minimal over $\mathfrak{m}$ and each simple $A_{\mathfrak{q}}$-module $V$,
$$\operatorname{gldim}A_{\mathfrak{q}} = \operatorname{pd}_{A_{\mathfrak{q}}}(V) = \operatorname{dim}S_{\mathfrak{q}}.$$
  \item We say $A$ is a \textit{noncommutative desingularization} if $A$ is cycle regular, and $A \otimes_R \operatorname{Frac}R$ and $\operatorname{Frac}R$ are Morita equivalent.
  \item We say $A$ is a \textit{nonnoetherian noncommutative crepant resolution} if $S$ is a normal Gorenstein domain, $A$ is cycle regular, and for each $\mathfrak{q} \in \operatorname{Spec}S$ minimal over $\mathfrak{m}$, $A_{\mathfrak{q}}$ is the endomorphism ring of a reflexive $Z(A_{\mathfrak{q}})$-module.
\end{itemize}
}\end{Definition}

\begin{Remark} \rm{
Suppose $B$ is a finitely generated $k$-algebra, and $k$ is uncountable.
Further suppose the embedding $\tau: A \hookrightarrow M_d(B)$ has the properties that 
\begin{enumerate}[label=(\roman*)]
 \item for generic $\mathfrak{b} \in \operatorname{Max}B$, the composition
\begin{equation*} \label{A to}
A \stackrel{\tau}{\longrightarrow} M_d(B) \stackrel{1}{\longrightarrow} M_d\left(B/\mathfrak{b} \right)
\end{equation*}
is surjective; 
 \item the morphism 
\begin{equation*} \label{B to}
\operatorname{Max}B \rightarrow \operatorname{Max}\tau(Z), \ \ \ \ \mathfrak{b} \mapsto \mathfrak{b}\mathbf{1}_d \cap \tau(Z),
\end{equation*}
is surjective; and
 \item for each $\mathfrak{n} \in \operatorname{Max}S$, $R_{\mathfrak{n} \cap R} = S_{\mathfrak{n}}$ iff $R_{\mathfrak{n} \cap R}$ is noetherian.
\end{enumerate}
$(\tau,B)$ is then said to be an \textit{impression} of $A$ \cite[Definition 2.1]{B7}.

Under these conditions, the center $Z$ of $A$ is equal to $R$,
$$Z = R \mathbf{1}_d,$$
and is depicted by $S$ \cite[Theorem 4.1.1]{B5}. 
Furthermore, by \cite[Theorem 4.1.2]{B5}, %\footnote{We note that $R = S$ is not equivalent to $R$ being noetherian without the assumption (\ref{standing assumption}); see \cite[Example 4.5]{B3}.}
\begin{align*} \label{R = S}
R = S \ \ \Leftrightarrow & \ \ \text{$A$ is a finitely generated $R$-module}\\
\Leftrightarrow & \ \ \text{$R$ is noetherian} \nonumber \\
\Rightarrow & \ \ \text{$A$ is noetherian} \nonumber
\end{align*}
In particular, if $R$ is noetherian, then the cyclic and central localizations of $A$ at $\mathfrak{q} \in \operatorname{Spec}S$ are isomorphic algebras,
$$A_{\mathfrak{q}} \cong A \otimes_R R_{\mathfrak{q} \cap R}.$$

If $\mathfrak{p} \in \operatorname{Spec}R$ and $\mathfrak{q} \in \operatorname{Spec}S$, then we denote by $A_{\mathfrak{p}}$ and $A_{\mathfrak{q}}$ the central and cyclic localizations of $A$ respectively; no ambiguity arises since the two localizations coincide whenever $R = S$.
}\end{Remark}

\section{A class of nonnoetherian homotopy algebras} \label{section 4}

For the remainder of this article, we will consider a class of homotopy algebras whose quivers contain vertices with indegree 1.
Such quivers are necessarily non-cancellative.
Unless stated otherwise, let $A$ be a nonnoetherian homotopy algebra with quiver $Q = (Q_0,Q_1, \operatorname{t},\operatorname{h})$ such that
\begin{enumerate} [label=(\Alph*)]
 \item a cancellative dimer algebra $A' = kQ'/I'$ is obtained by contracting each arrow of $Q$ whose head has indegree 1; and
 \item for each $a \in Q_1$, the indegrees $\operatorname{deg}^+ \operatorname{t}(a)$ and $\operatorname{deg}^+ \operatorname{h}(a)$ are not both $1$.
\end{enumerate}

Set
\begin{equation*} \label{Qt}
Q_1^* = \left\{ a \in Q_1 \ | \ \operatorname{deg}^+\operatorname{h}(a) = 1 \right\} \ \ \ \text{ and } \ \ \ Q_1^{\operatorname{t}} := \left\{ a \in Q_1 \ | \ \operatorname{deg}^+\operatorname{t}(a) = 1 \right\}.
\end{equation*}
The quiver $Q' =(Q'_0,Q'_1,\operatorname{t}',\operatorname{h}')$ is then defined by
$$Q_0' = Q_0 / \left\{ \operatorname{h}(a) \sim \operatorname{t}(a) \ | \ a \in Q_1^* \right\}, \ \ \ \ Q_1' = Q_1 \setminus Q_1^*,$$
and for each arrow $a \in Q'_1$, 
$$\operatorname{h}'(a) = \operatorname{h}(a) \ \ \ \text{ and } \ \ \ \operatorname{t}'(a) = \operatorname{t}(a).$$ 

The homotopy algebras $A$ and $A'$ are isomorphic to tiled matrix rings.
Indeed, consider the $k$-linear map
$$\psi: A \rightarrow A'$$ 
defined by
$$\psi(a) = \left\{ \begin{array}{cl} a & \text{ if } \ a \in Q_0 \cup Q_1 \setminus Q_1^* \\ e_{\operatorname{t}(a)} & \text{ if } \ a \ \in Q_1^* \end{array} \right.$$
and extended multiplicatively to (nonzero) paths and $k$-linearly to $A$.
Furthermore, consider the polynomial ring generated by the simple matchings $\mathcal{S}'$ of $A'$, 
$$B = k\left[ x_D \ | \ D \in \mathcal{S}' \right].$$ 
By \cite[Theorem 1.1]{B2}, there are injective algebra homomorphisms
$$\tau: A' \hookrightarrow M_{|Q'_0|}(B) \ \ \ \text{ and } \ \ \ \tau_{\psi}: A \hookrightarrow M_{|Q_0|}(B)$$
defined by
\begin{align*}
\tau(a) = & \left\{ \begin{array}{ll}
e_{ii} & \text{ if } a = e_i \in Q'_0 \\
\left( \prod_{D \in \mathcal{S}' \, : \, D \ni a} x_D \right) e_{\operatorname{h}(a),\operatorname{t}(a)} & \text{ if } a \in Q'_1
\end{array} \right.
\\
\tau_{\psi}(a) = & \left\{ \begin{array}{ll}
e_{ii} & \text{ if } a = e_i \in Q_0 \\
\left( \prod_{D \in \mathcal{S}' \, : \, D \ni \psi(a)} x_D \right) e_{\operatorname{h}(a),\operatorname{t}(a)} & \text{ if } a \in Q_1
\end{array} \right.
\end{align*}
and extended multiplicatively and $k$-linearly to $A'$ and $A$.

For $p \in e_jAe_i$ and $p' \in e_jA'e_i$, denote by 
$$\bar{\tau}_{\psi}(p) = \overbar{p} \in B \ \ \ \text{ and } \ \ \ \bar{\tau}(p') = \overbar{p}' \in B$$ 
the single nonzero matrix entry of $\tau_{\psi}(p)$ and $\tau(p')$, respectively. 
Note that 
$$\bar{\tau}_{\psi}(p) = \bar{\tau} (\psi(p)).$$
Furthermore, for each $a \in Q_1$ and $D \in \mathcal{S}'$,
$$x_D | \overbar{a} \ \ \ \Longleftrightarrow \ \ \ \psi(a) \in D.$$

Since $A'$ is cancellative, each $a' \in Q'_1$ is contained in a simple matching by \cite[Theorem 1.1]{B4}; in particular, $\overbar{a}' \not = 1$.
Therefore, for each $a \in Q_1$,
$$\overbar{a} = 1 \ \ \ \Longleftrightarrow \ \ \ \operatorname{deg}^+ \operatorname{h}(a) = 1.$$

\begin{Lemma} \label{over} \
\begin{enumerate}
 \item The cycle algebras of $A$ and $A'$ are equal,\footnote{The map $\psi$ is therefore called a `cyclic contraction' \cite[Section 3]{B2}.}
\begin{equation*} \label{cyclic contraction}
k\left[ \cup_{i \in Q_0} \bar{\tau}_{\psi}\left(e_iAe_i\right) \right] = k\left[ \cup_{i \in Q'_0} \bar{\tau}\left( e_iA'e_i \right) \right] = S.
\end{equation*} 
 \item The center $Z'$ of $A'$ is isomorphic to $S$, and the center $Z$ of $A$ is isomorphic to the intersection
$$Z \cong k\left[ \cap_{i \in Q_0} \bar{\tau}_{\psi}(e_iAe_i) \right] = R.$$
 \item $S$ is a depiction of $R$.
 \item If the indegree of a vertex $i \in Q_0$ is at least $2$, then
 $$\bar{\tau}_{\psi}(e_iAe_i) = S.$$
In particular, for each arrow $a \in Q_1$,
$$\bar{\tau}_{\psi}(e_{\operatorname{t}(a)}Ae_{\operatorname{t}(a)}) = S \ \ \ \text{ or } \ \ \ \bar{\tau}_{\psi}(e_{\operatorname{h}(a)}Ae_{\operatorname{h}(a)}) = S.$$
\end{enumerate}
\end{Lemma}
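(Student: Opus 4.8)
The four claims are interrelated, so I would organize the proof around understanding the image algebras $\bar{\tau}_\psi(A)$ and $\bar{\tau}(A')$ and the combinatorial role of the map $\psi$. The key observation driving everything is the identity $\bar{\tau}_\psi(p) = \bar{\tau}(\psi(p))$ already recorded before the lemma: it says that the "overline" invariant is transported faithfully along $\psi$, so cycles in $Q$ and cycles in $Q'$ have literally the same image in $B$. For claim (1), I would first note that every cycle $p \in e_iAe_i$ maps under $\psi$ to a cycle $\psi(p) \in e_{i'}A'e_{i'}$ at the corresponding vertex $i' = \psi(i)$, giving the inclusion $k[\cup_i \bar{\tau}_\psi(e_iAe_i)] \subseteq k[\cup_{i'} \bar{\tau}(e_{i'}A'e_{i'})]$. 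The reverse inclusion requires showing that $\psi$ is surjective onto cycles of $A'$ up to the relation generated by $I$ and non-cancellative pairs — that is, every cycle $p'$ at a vertex of $Q'$ lifts to a cycle of $Q$. Since the contracted arrows of $Q_1^*$ have heads of indegree $1$, a path in $Q'$ through a contracted vertex has a unique lift through the in/out arrows at that vertex, so this lifting is straightforward. That establishes the equality with $S$ once we recall $S := k[\cup_i A^i]$ and identify $A^i$ with $\bar{\tau}_\psi(e_iAe_i)$.

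For claim (2), the statement $Z' \cong S$ should follow from the existing impression machinery: $(\tau, B)$ is an impression of $A'$ by \cite[Theorem 1.1]{B2}, and by the Remark's invocation of \cite[Theorem 4.1.1]{B5} the center of a tiled matrix ring with an impression equals the intersection of the diagonal subalgebras — but for the cancellative (noetherian) dimer algebra $A'$, the diagonal subalgebras $\bar{\tau}(e_{i'}A'e_{i'})$ all coincide with $S$ (this is a known feature of cancellative dimer algebras, essentially \cite[Theorem 1.1]{B2} again, or can be deduced from claim (4) applied to $A'$, where every vertex of a cancellative dimer quiver has indegree $\geq 2$ unless one allows the length-1 unit cycle exceptions). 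The statement $Z \cong R = k[\cap_i \bar{\tau}_\psi(e_iAe_i)]$ is then the same impression statement applied to $A$ via $\tau_\psi$: one checks $(\tau_\psi, B)$ is an impression of $A$ and applies \cite[Theorem 4.1.1]{B5}. Claim (3) is immediate from claim (2) combined with the definitions: $S$ is a finitely generated $k$-algebra (polynomial-subring-generated), the map $\operatorname{Spec}S \to \operatorname{Spec}R$ is surjective because $R \subseteq S$ is an inclusion of domains with $S$ module-finite-ish over... — actually here one must be a little careful since $R$ is nonnoetherian, so I would instead cite property (iii) of the impression (the noetherian-locus condition), which is exactly the depiction condition, and invoke \cite[Theorem 4.1.1]{B5} which packages "$Z = R$, depicted by $S$" together.

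For claim (4), the real combinatorial content: if $\deg^+ i \geq 2$, then $\bar{\tau}_\psi(e_iAe_i) = S$. The idea is that $S$ is generated by the images of cycles at various vertices, and I must show each such generator already appears as the image of a cycle based at $i$. Given a cycle $p'$ at some other vertex $j'$ of $Q'$ with image $\overbar{p}' \in S$, I want to produce a cycle at $i$ with the same image. Using that $A'$ is cancellative and on a torus, any two vertices are connected by paths in both directions, so I can form $q p' r$ where $r: i \to j'$ and $q: j' \to i$; the obstruction is that $\overbar{q}\,\overbar{r}$ contributes extra factors. The hypothesis $\deg^+ i \geq 2$ is what lets me "absorb" or cancel these extra factors: with indegree at least $2$ at $i$ there are enough distinct arrows/cycles through $i$ to realize, via the dimer relations, the needed monomial quotients — this is where I expect to lean on the structure of perfect/simple matchings, showing that the localization-free diagonal $\bar{\tau}_\psi(e_iAe_i)$ cannot be a proper subalgebra of $S$ once $i$ is not a "contracted-into" vertex. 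The final sentence of (4) follows immediately from hypothesis (B): for any arrow $a$, not both $\operatorname{t}(a)$ and $\operatorname{h}(a)$ have indegree $1$, so at least one has indegree $\geq 2$, and (4) applies to that vertex.

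**Main obstacle.** The hard part is claim (4): establishing that indegree $\geq 2$ forces the full diagonal subalgebra to equal $S$. This is genuinely a statement about when a vertex of the homotopy algebra "sees" all cycles, and it requires a careful matching-theoretic argument — likely showing that for a vertex $i$ of indegree $\geq 2$ and any simple matching $D \in \mathcal{S}'$, there is a cycle at $i$ whose image is divisible by $x_D$ to the appropriate power, and more generally that $\bar{\tau}_\psi(e_iAe_i)$ is closed under the multiplications needed to hit every monomial of $S$. Claims (1)–(3) are comparatively formal, reducing to the transport identity $\bar{\tau}_\psi = \bar{\tau}\circ\psi$ and the impression theorems of \cite{B5, B2, B7} already quoted in the Remark.
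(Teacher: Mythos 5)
Your treatment of claims (1)--(3) is roughly aligned with the paper's approach, though the references are slightly off: the equality of all corner rings $\bar{\tau}(e_iA'e_i) = \bar{\tau}(e_jA'e_j)$ for the cancellative algebra $A'$ is \cite[Theorem 1.1]{B4}, not \cite[Theorem 1.1]{B2}, and claim (3) is delegated to \cite[Theorem 1.1]{B6} rather than to the impression machinery of \cite{B5}. You also propose deducing the uniformity of the $A'$-corner-rings from claim (4), which is backwards: (4) is a statement about $A$ and is \emph{proven from} the uniformity in $A'$.

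The real issue is claim (4), which you flag as the hard combinatorial content and for which you sketch a matching-theoretic argument about realizing arbitrary monomials in $S$ as cycles at $i$. That is not what the paper does, and it substantially overcomplicates the step. The actual argument is one line: by assumption (A), if $\deg^+ i \geq 2$ then
\[
\bar{\tau}_{\psi}(e_iAe_i) = \bar{\tau}(e_{\psi(i)}A'e_{\psi(i)}),
\]
and the right-hand side equals $S$ by the equality of corner rings of $A'$ already established in (2). The point you are missing is that when $\deg^+ i \geq 2$, vertex $i$ is the unique vertex of indegree $\geq 2$ in its $\psi$-fiber, since by (B) every arrow in $Q_1^*$ goes from an indegree-$\geq 2$ vertex to an indegree-$1$ vertex, so the fiber is a depth-one star rooted at $i$; and since an indegree-$1$ vertex receives only the contracted arrow, every noncontracted arrow of $Q$ whose $\psi$-image lands at $\psi(i)$ actually has head $i$. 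Consequently cycles in $Q'$ at $\psi(i)$ lift (after prepending contracted arrows where needed) to cycles in $Q$ at $i$, giving the displayed equality. No matching-theoretic argument about divisibility by $x_D$ is needed; (4) reduces to (1) and (2), and your proposal as written leaves the key step unproven.
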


\begin{proof}
(1) By assumption (A), for each cycle $p'$ in $Q'$, there is a cycle $p$ in $Q$ such that $\psi(p) = p'$.
Therefore the cycle algebras of $A$ and $A'$ are equal.

(2) Since $A'$ is cancellative, for each $i,j \in Q'_0$,
\begin{equation*} \label{what is time?}
\bar{\tau}(e_iA'e_i) = \bar{\tau}(e_jA'e_j),
\end{equation*}
by \cite[Theorem 1.1]{B4}.
Whence for each $i \in Q'_0$,
\begin{equation} \label{what is time?2}
\bar{\tau}(e_iA'e_i) = S.
\end{equation}
Furthermore, the centers $Z$ and $Z'$ are isomorphic to the intersections
$$Z \cong k\left[ \cap_{i \in Q_0} \bar{\tau}_{\psi}(e_iAe_i) \right]  = R \ \ \ \text{ and } \ \ \ Z' \cong k \left[ \cap_{i \in Q'_0} \bar{\tau}(e_iA'e_i) \right],$$
by \cite[Theorem 1.1]{B2}.
Therefore $Z'$ is isomorphic to $S$ by (\ref{what is time?2}).

(3) Since $A$ and $A'$ have equal cycle algebras, $Z \cong R$ is depicted by $Z' \cong S$, by \cite[Theorem 1.1]{B6}. 

(4) By assumption (A), if a vertex $i \in Q_0$ has indegree at least 2, then
\begin{equation*} \label{what is time?3}
\bar{\tau}_{\psi}(e_iAe_i) = \bar{\tau}(e_{\psi(i)}A'e_{\psi(i)}) \stackrel{\textsc{(i)}}{=} S,
\end{equation*}
where (\textsc{i}) holds by (\ref{what is time?2}).
Furthermore, by assumption (B), the head or tail of each arrow $a \in Q_1$ has indegree at least 2.
\end{proof}

\section{Prime decomposition of the origin}

Recall that $A$ is a nonnoetherian homotopy algebra with center $R$, satisfying assumptions (A) and (B) given in Section \ref{section 4}.
Consider the origin of $\operatorname{Max}R$,
$$\mathfrak{m}_0 := \left(x_D \ | \ D \in \mathcal{S}' \right)B \cap R.$$
For a monomial $g \in B$, denote by $\mathfrak{q}_g$ the ideal in $S$ generated by all monomials in $S$ that are divisible by $g$ in $B$.
If $g = x_D$ for some simple matching $D \in \mathcal{S}'$, then set
$$\mathfrak{q}_D := \mathfrak{q}_{x_D}.$$
We will write $h \mid g$ if $h$ divides $g$ in $B$, unless stated otherwise.

\begin{Lemma} \label{prime1}
Let $g \in B$ be a monomial.
Then the ideal $\mathfrak{q}_g \subset S$ is prime if and only if $g = x_D$ for some $D \in \mathcal{S}'$.
\end{Lemma}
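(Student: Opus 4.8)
The plan is to prove both directions of the iff via a careful analysis of divisibility of monomials in $S$ by $g$, exploiting the structure of $S$ as the cycle algebra of the cancellative dimer algebra $A'$. The key fact I would invoke is that for a cancellative dimer algebra, every simple matching $D \in \mathcal{S}'$ cuts $S$ into pieces controlled by the single variable $x_D$, and conversely the simple matchings are exactly the irreducible building blocks that can appear. I would also use that $S = k[\cup_i \bar\tau_\psi(e_iAe_i)] = k[\cup_i \bar\tau(e_iA'e_i)]$ is spanned as a $k$-vector space by monomials (the $\bar\tau$-images of cycles), so that $S$ is a monoid algebra and ideals generated by sets of monomials behave combinatorially; in particular $\mathfrak{q}_g$ is spanned by the monomials of $S$ divisible (in $B$) by $g$.

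For the \emph{if} direction: suppose $g = x_D$. I would show $\mathfrak{q}_D$ is prime by checking that if $m_1 m_2 \in \mathfrak{q}_D$ for monomials $m_1, m_2 \in S$, then $x_D \mid m_1 m_2$ in $B$, hence $x_D \mid m_1$ or $x_D \mid m_2$ since $B = k[x_D \mid D \in \mathcal S']$ is a polynomial ring (a UFD), so one of the $m_i$ lies in $\mathfrak{q}_D$; combined with monomial-spanning this gives primality of the ideal. (One must also note $\mathfrak{q}_D \neq S$: the empty cycle $e_i$ maps to $1$, which is not divisible by $x_D$, so $1 \notin \mathfrak{q}_D$ — more carefully, one needs that not every monomial of $S$ is divisible by $x_D$, which follows because some cycle avoids $D$, e.g. a unit cycle other than... actually a simple matching meets every unit cycle, so one argues instead using a vertex-simple cycle in $Q'\setminus D$ or directly that $S$ has monomials of $x_D$-degree zero.)

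For the \emph{only if} direction: suppose $g$ is a monomial that is not of the form $x_D$. I would split into cases. If $g = 1$ then $\mathfrak{q}_g = S$ is not prime by convention. Otherwise write $g = \prod_{D} x_D^{a_D}$ with either two distinct variables occurring, or a single variable with exponent $\geq 2$. In either case the goal is to produce monomials $m_1, m_2 \in S$ with $m_1 m_2 \in \mathfrak{q}_g$ but $m_1, m_2 \notin \mathfrak{q}_g$. The natural candidates: take $m_1, m_2$ to be $\bar\tau$-images of cycles through a common vertex such that each $m_i$ is divisible by a "proper part" of $g$ but not all of $g$, while $m_1 m_2$ (a genuine element of $S$, being a product of cycles at the same vertex) is divisible by $g$. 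Constructing these requires knowing which monomials occur in $S$, i.e. understanding $\bar\tau(e_iA'e_i)$ well enough; here I would lean on the description of cycles in a cancellative dimer algebra and the fact that the simple matchings generate enough cycles — concretely, that each $x_D$ appears in the $\bar\tau$-image of some simple cycle, and products of such realize arbitrary monomials compatible with the torus-homology constraints.

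The main obstacle I expect is the \emph{only if} direction — specifically, exhibiting the witnesses $m_1, m_2 \in S$ in the mixed-variable and high-power cases. This is where one genuinely needs the combinatorics of the dimer quiver on the torus (perfect/simple matchings, unit cycles, and the structure of the cycle algebra as a toric-type ring), rather than formal commutative algebra. I would handle it by reducing to statements about $A'$ (since the cycle algebras agree) and citing the structural results on cancellative dimer algebras from \cite{B4,B2}, in particular that $S$ is the coordinate ring of a toric variety whose defining monoid is generated by cycles and whose "prime" monomial ideals cut out by a single coordinate are exactly the $\mathfrak{q}_D$.
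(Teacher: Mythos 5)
Your plan coincides with the paper's in both directions and overall strategy: the \emph{if} direction uses that $B$ is a polynomial ring (hence a UFD), so $x_D \mid \overbar{s}\,\overbar{t}$ forces $x_D \mid \overbar{s}$ or $x_D \mid \overbar{t}$, and the \emph{only if} direction aims to produce two monomials of $S$ whose product lies in $\mathfrak{q}_g$ while neither factor does. You also rightly flag that $\mathfrak{q}_D \neq S$ must be checked, a point the paper leaves implicit; your suggestion to use a cycle supported on $Q' \setminus D$ is exactly what works, since $D$ is simple.

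The genuine gap is in the \emph{only if} direction: you name the witnesses you want but defer their construction to ``structural results on cancellative dimer algebras,'' which is precisely the content that has to be produced, and there is no off-the-shelf citation that hands it to you. The paper's construction is short and elementary, and it hinges on a specific separation claim that you do not identify: for any two \emph{distinct} simple matchings $D_i, D_j \in \mathcal{S}'$ there is a cycle $s$ in $A'$ with $x_i \mid \overbar{s}$ and $x_j \nmid \overbar{s}$. This is proved by choosing an arrow $a \in D_i \setminus D_j$ (it exists because $D_i \neq D_j$) and then, using that $D_j$ is \emph{simple}, a path $p \in e_{\operatorname{t}(a)}A'e_{\operatorname{h}(a)}$ supported on $Q' \setminus D_j$; the cycle $s := pa$ does the job, and $\overbar{s}$ is a monomial of $S$ since $A$ and $A'$ share the same cycle algebra. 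Given this claim, for $g = \prod_{i=1}^{n'} x_i^{m_i}$ with $n' \geq 2$ the witnesses are $h_1 := \overbar{s}_1^{m_1}$ and $h_2 := \prod_{i \geq 2} \overbar{s}_i^{m_i}$, where $s_1$ separates $D_1$ from $D_2$ and each $s_i$ (for $i \geq 2$) separates $D_i$ from $D_1$: then $h_1 h_2 \in \mathfrak{q}_g$ but $x_2 \nmid h_1$ and $x_1 \nmid h_2$, so neither factor is in $\mathfrak{q}_g$. For your remaining case $g = x_D^m$ with $m \geq 2$ (which the paper passes over without comment), the cleanest witnesses are powers of $\sigma = \prod_{D' \in \mathcal{S}'} x_{D'}$, the common image of the unit cycles; since every simple matching is a perfect matching, $\sigma$ has $x_D$-degree exactly one, so $\sigma^{m-1}\cdot\sigma \in \mathfrak{q}_g$ while neither $\sigma^{m-1}$ nor $\sigma$ is.
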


\begin{proof}
Let $n := | \mathcal{S}' |$, and enumerate the simple matchings of $A'$, $\mathcal{S}' = \{ D_1, \ldots, D_n \}$.
Set $x_i := x_{D_i}$.

(i) We first claim that for each pair of distinct simple matchings $D_i,D_j \in \mathcal{S}'$, there is a cycle $s \in A$ satisfying
\begin{equation} \label{the end}
x_i \mid \overbar{s} \ \ \ \text{ and } \ \ \ x_j \nmid \overbar{s}.
\end{equation}

Indeed, fix $i \not = j$.
Since $D_i \not = D_j$, there is an arrow $a \in Q'_1$ for which $a \in D_i \setminus D_j$.
Furthermore, since $D_j$ is simple, there is a path $p \in e_{\operatorname{t}(a)}A'e_{\operatorname{h}(a)}$ supported on $Q'\setminus D_j$.
Whence $s:= pa$ is a cycle satisfying (\ref{the end}).
But $A$ and $A'$ have equal cycle algebras by Lemma \ref{over}.1.
Therefore $\overbar{s}$ is the $\bar{\tau}_{\psi}$-image of a cycle in $A$, proving our claim.

(ii) We now claim that if $g \in B$ is a monomial and $\mathfrak{q}_g$ is a prime ideal of $S$, then $g = x_D$ for some $D \in \mathcal{S}'$.
It suffices to consider a monomial $g = \prod_{i = 1}^{n'} x_i^{m_i}$, where $2 \leq n' \leq n$, and for each $i$, $m_i \geq 1$.
By Claim (i), there are cycles $s_1, \ldots, s_{n'} \in A$ such that 
$$x_1 \mid \overbar{s}_1, \ \ \ \ x_2 \nmid \overbar{s}_1,$$
and for each $2 \leq i \leq n'$,
$$x_1 \nmid \overbar{s}_i, \ \ \ \ x_i \mid \overbar{s}_i.$$

Set 
$$h_1 := \overbar{s}_1^{m_1} \ \ \ \text{ and } \ \ \ h_2 := \prod_{i = 2}^{n'} \overbar{s}_i^{m_i}.$$
Then $h_1h_2 \in \mathfrak{q}_g$.
But $h_1 \not \in \mathfrak{q}_g$ and $h_2 \not \in \mathfrak{q}_g$ since $x_2 \nmid h_1$ and $x_1 \nmid h_2$.
Therefore $\mathfrak{q}_g$ is not prime.

(iii) Finally, consider a simple matching $D \in \mathcal{S}'$.
If $s,t \in e_iAe_i$ are cycles for which $x_D \mid \overbar{st}$, then $x_D \mid \overbar{s}$ or $x_D \mid \overbar{t}$, since $B$ is the polynomial ring generated by $\mathcal{S}'$.
Therefore the ideal $\mathfrak{q}_{x_D}$ is prime.
\end{proof}

\begin{Lemma} \label{green}
Let $i,j \in Q_0$ and $D \in \mathcal{S}'$.
If $\operatorname{deg}^+ i \geq 2$, or $i$ is not the tail of an arrow $a \in Q_1^{\operatorname{t}}$ for which $x_D \mid \overbar{a}$, then there is a path $p \in e_jAe_i$ such that $x_D \nmid \overbar{p}$.
\end{Lemma}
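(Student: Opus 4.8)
The plan is to push the problem to the cancellative algebra $A'$ through the cyclic contraction $\psi$, exploiting the identity $\overbar{p}=\overbar{\psi(p)}$ for paths $p$ of $Q$, the equivalence $x_D\mid\overbar{a}\iff\psi(a)\in D$ for arrows $a\in Q_1$, and the fact that $\psi$ sends every arrow of $Q_1^*$ to a vertex. Since $B$ is a polynomial ring, these give: for a path $p$ of $Q$ one has $x_D\nmid\overbar{p}$ exactly when $\psi(p)$, regarded as a path of $Q'$, is supported on $Q'\setminus D$. Moreover, since $\tau_\psi$ and $\tau$ are injective and $\overbar{p}=\overbar{\psi(p)}$, a path $p$ of $Q$ is nonzero in $A$ if and only if $\psi(p)$ is nonzero in $A'$. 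Hence it suffices to produce a path $q'$ of $Q'$ from $\psi(i)$ to $\psi(j)$ which is supported on $Q'\setminus D$, is nonzero in $A'$, and lifts to a path $p\in e_jAe_i$ with $\psi(p)=q'$; then $p\neq 0$ and $\overbar{p}=\overbar{q'}$ is not divisible by $x_D$.

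The next step is to record the shape of the fibres of the vertex map $\psi\colon Q_0\to Q'_0$. Each $a\in Q_1^*$ has $\operatorname{deg}^+\operatorname{h}(a)=1$ and, by (B), $\operatorname{deg}^+\operatorname{t}(a)\geq 2$; moreover no two arrows of $Q_1^*$ share a head, and the head of a $Q_1^*$-arrow is never the tail of one. Hence each fibre of $\psi$ is either a single vertex of indegree $\geq 2$ or a \emph{star}: a centre $c$ with $\operatorname{deg}^+ c\geq 2$ together with leaves $\ell$ of indegree $1$, one $Q_1^*$-arrow $c\to\ell$ per leaf. It follows that (i) every vertex of indegree $1$ is a leaf of some star, with unique incoming arrow the $Q_1^*$-arrow into it; (ii) every $b'\in Q'_1=Q_1\setminus Q_1^*$ has $\operatorname{deg}^+\operatorname{h}(b')\geq 2$, so $\operatorname{h}(b')$ is the centre (or isolated vertex) of its fibre; and (iii) from the centre (or isolated vertex) of a fibre, every vertex of that fibre is reached by a path of at most one $Q_1^*$-arrow. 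From (ii) and (iii) one gets the \emph{lifting principle}: if $\operatorname{deg}^+ v\geq 2$, then every path $q'=b'_t\cdots b'_1$ of $Q'$ from $\psi(v)$ to $\psi(j)$ lifts to a path of $Q$ from $v$ to $j$ — lift each $b'_r$ to itself, and insert between consecutive lifts (and at the two ends) a possibly empty one-$Q_1^*$-arrow path bridging the relevant two vertices inside their common fibre; such bridges exist by (ii)--(iii) at the interior junctions and at the target, and by $\operatorname{deg}^+ v\geq 2$ with (iii) at the source. As $\psi$ annihilates the inserted $Q_1^*$-arrows, the resulting path $p$ has $\psi(p)=q'$.

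Suppose first $\operatorname{deg}^+ i\geq 2$. By definition $Q'\setminus D$ supports a simple $A'$-module $M$ of dimension $1^{Q'_0}$ on which every arrow of $D$ acts by $0$; by simplicity $e_{\psi(j)}A'e_{\psi(i)}\twoheadrightarrow e_{\psi(j)}M\neq 0$, so some path $q'\in e_{\psi(j)}A'e_{\psi(i)}$ acts nonzero on $M$, whence $q'\neq 0$ in $A'$ and $q'$ is supported on $Q'\setminus D$; applying the lifting principle to $q'$ finishes this case. Suppose instead $\operatorname{deg}^+ i=1$, so $i$ has a unique out-arrow $a$, which lies in $Q_1^{\operatorname{t}}$; the hypothesis then forces $x_D\nmid\overbar{a}$, and by (B) we have $a\in Q'_1$ and $\operatorname{deg}^+ i'\geq 2$, where $i':=\operatorname{h}(a)$. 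Take $M$ in the explicit form in which each arrow outside $D$ acts as the identity between the relevant one-dimensional spaces; this is a well-defined simple $A'$-module, because each defining relation $p=q$ of $A'$ (from unit cycles $pb,qb$) forces $p$ and $q$ either both to contain the $D$-arrow of the cycle (if $b\notin D$) or both to avoid $D$ (if $b\in D$), and because $Q'\setminus D$ carries a cycle through every vertex. Then $a$ acts nonzero on $M$; picking $0\neq m\in e_{\psi(i)}M$ and, by simplicity, a path $r'\in e_{\psi(j)}A'e_{\psi(i')}$ with $r'(am)\neq 0$, the path $q':=r'a$ acts nonzero on $M$, so $q'\neq 0$ in $A'$ and $q'$ is supported on $Q'\setminus D$. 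Lifting $q'=r'a$ to $e_jAe_i$: begin with the arrow $a$ from $i=\operatorname{t}(a)$ (no bridge is needed at the source since $i=\operatorname{t}(a)$), then lift $r'$ from $i'$ using $\operatorname{deg}^+ i'\geq 2$; the resulting $p\in e_jAe_i$ has $\psi(p)=q'$, so $p\neq 0$ and $x_D\nmid\overbar{p}$, as desired.

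The step I expect to be delicate is the lifting principle, specifically lifting a path of $Q'$ to one of $Q$ with the \emph{prescribed} source $i$. By (ii) the bridging is automatic at interior junctions and at the target $j$, so the only place it can fail is at $i$; the hypothesis ``$\operatorname{deg}^+ i\geq 2$, or the out-arrow of $i$ avoids $D$'' is precisely what rescues the lift there — when $\operatorname{deg}^+ i=1$ the lifted path is forced to begin with the unique out-arrow $a$, which must therefore avoid $D$. The remaining ingredients (expressing $\overbar{p}$ through $\psi$, factoriality of $B$, and the module on $Q'\setminus D$) are routine.
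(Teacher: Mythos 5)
Your proof is correct and follows essentially the same two-case strategy as the paper: use the simplicity of $D$ to find a $D$-avoiding path $q'$ in $Q'$, lift it to $Q$ via assumption (A) when $\operatorname{deg}^+ i\geq 2$, and reduce the $\operatorname{deg}^+ i=1$ case to the first case by prepending the unique out-arrow $a$ (which avoids $D$ by hypothesis). The only difference is that you spell out the fibre structure of $\psi$ and the explicit action on the module supported on $Q'\setminus D$, steps the paper compresses into citations of assumption (A) and the definition of simple matching.
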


\begin{proof} 
(i) First suppose $\operatorname{deg}^+i \geq 2$.
Since $D$ is simple, there is a path $q \in e_{\psi(j)}A'e_{\psi(i)}$ supported on $Q' \setminus D$; whence $x_D \nmid \overbar{q}$.
Furthermore, since $\operatorname{deg}^+ i \geq 2$, there is a path $p \in e_jAe_i$ such that $\psi(p) = q$, by assumption (A).
In particular, $x_D \nmid \overbar{q} = \overbar{p}$.

(ii) Now suppose $\operatorname{deg}^+i = 1$. 
Let $a \in Q_1^{\operatorname{t}}$ be such that $\operatorname{t}(a) = i$.
Then $\operatorname{deg}^+ \operatorname{h}(a) \geq 2$ by assumption (B).
Thus there is a path $t \in e_jAe_{\operatorname{h}(a)}$ for which $x_D \nmid \overbar{t}$, by Claim (i).
Therefore if $x_D \nmid \overbar{a}$, then the path $p:= ta \in e_jAe_i$ satisfies $x_D \nmid \overbar{p}$.
\end{proof}

\begin{Notation} \rm{
Denote by $\sigma_i$ the unit cycle at vertex $i \in Q_0$, and by
$$\sigma := \bar{\tau}_{\psi}(\sigma_i) = \prod_{D \in \mathcal{S}'} x_D$$
the common $\bar{\tau}_{\psi}$-image of each unit cycle in $Q$.
($\sigma$ is also the $\bar{\tau}$-image of each unit cycle in $Q'$.)
Furthermore, consider a covering map of the torus, $\pi: \mathbb{R}^2 \to T^2$, such that for some $i \in Q_0$,
$$\pi(\mathbb{Z}^2) = i.$$ 
Denote by
$$Q^+ := \pi^{-1}(Q) \subset \mathbb{R}^2$$
the covering quiver of $Q$. 
For each path $p$ in $Q$, denote by $p^+$ a path in $Q^+$ with tail in $[0,1) \times [0,1) \subset \mathbb{R}^2$ satisfying $\pi(p^+) = p$.
}\end{Notation}

\begin{Lemma} \label{one more}
Let $a \in A'$ be an arrow and let $s \in e_{\operatorname{t}(a)}A'e_{\operatorname{t}(a)}$ be a cycle satisfying $\overbar{a} \mid \overbar{s}$.
Then there is a path $p \in e_{\operatorname{t}(a)}A'e_{\operatorname{h}(a)}$ such that 
$$s = pa.$$
\end{Lemma}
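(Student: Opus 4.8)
The plan is to reduce the statement to the realizability of a single monomial by a path, to produce that monomial using the unit cycle through $a$, and to invoke the cancellativity of $A'$ to conclude. First I would note that, since $\overbar{a}\mid\overbar{s}$ in the polynomial ring $B$, the quotient $\overbar{s}/\overbar{a}$ is a monomial of $B$. It then suffices to exhibit a path $p\in e_{\operatorname{t}(a)}A'e_{\operatorname{h}(a)}$ with $\overbar{p}=\overbar{s}/\overbar{a}$: indeed, for such a $p$ we have $\overbar{pa}=\overbar{p}\,\overbar{a}=\overbar{s}$, and since $A'$ is cancellative the homomorphism $\tau$ of \cite[Theorem 1.1]{B2} is injective, so its restriction to $e_{\operatorname{t}(a)}A'e_{\operatorname{t}(a)}$ is injective; as $pa$ and $s$ both lie there with the same $\bar{\tau}$-image, $s=pa$.

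To produce $p$, set $i:=\operatorname{t}(a)$ and let $C$ be a unit cycle at $i$ whose first arrow is $a$ — such a $C$ exists because $a$ borders a face of $T^2\setminus\overbar{Q}'$ — so that $C=p^{\circ}a$ with $p^{\circ}\in e_i A' e_{\operatorname{h}(a)}$ and $\overbar{C}=\sigma$ by the definition of $\sigma$. Put $r:=s\,p^{\circ}\in e_i A' e_{\operatorname{h}(a)}$. Then $ra=s p^{\circ}a=sC$, so $\overbar{r}\,\overbar{a}=\overbar{s}\,\overbar{C}=\overbar{s}\,\sigma$, whence $\overbar{r}=(\overbar{s}/\overbar{a})\,\sigma$; in particular $\sigma=\prod_{D\in\mathcal{S}'}x_D$ divides $\overbar{r}$. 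It is therefore enough to establish the following: a path $r$ of $A'$ with $\sigma\mid\overbar{r}$ is divisible by the central element $\sigma$, that is, $r=\sigma r'$ for some path $r'$. Granting this, $p:=r'\in e_i A' e_{\operatorname{h}(a)}$ satisfies $\overbar{p}=\overbar{r}/\sigma=\overbar{s}/\overbar{a}$, as required.

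The main obstacle is precisely this last claim: if a path $r$ of the cancellative dimer algebra $A'$ satisfies $\sigma\mid\overbar{r}$, then $r$ traverses a full unit cycle (equivalently, $r$ is divisible by the central $\sigma$). I would prove it using the covering quiver $\pi^{-1}(Q')\subset\mathbb{R}^2$ of $Q'$: a lift $r^{+}$ of $r$ meets $\pi^{-1}(D)$ for every simple matching $D\in\mathcal{S}'$, since $x_D\mid\overbar{r}$, and by the consistency of the cancellative dimer model on the torus this forces $r^{+}$ to enclose a face of the covering quiver; the boundary of an innermost enclosed face can then be commuted, via the relations of $A'$, into a unit-cycle subpath of $r$, which factors out. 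This "enclosing a face" step is where cancellativity — equivalently, noetherianity, by \cite[Theorem 1.1]{B4} — is genuinely used, and is the only substantial point; alternatively it may be extracted from the structure theory of simple matchings of cancellative dimer algebras.
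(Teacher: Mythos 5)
Your strategy is genuinely different from the paper's and, if its central claim can be supplied, it is cleaner. Rather than peel the single arrow $a$ off the right of $s$, you multiply on the right by the path $p^\circ$ complementing $a$ to a unit cycle $C=p^\circ a$ at $\operatorname{t}(a)$, obtaining $r:=s p^\circ$ with $\overbar{r}=(\overbar{s}/\overbar{a})\,\sigma$, and then attempt to peel a full unit cycle off $r$. The paper's proof works very differently: it first reduces to the case $\sigma\nmid\overbar{s}$ via \cite[Lemma 2.1]{B2}, places $s$ in a set $\hat{\mathcal{C}}^u$, compares $s$ with a cycle $t\in\hat{\mathcal{C}}^u_{\operatorname{h}(a)}$ satisfying $\overbar{t}=\overbar{s}$, and runs a geometric dichotomy on whether the lifts $s^+$ and $t^+$ intersect, using \cite[Lemma 4.15]{B2} and \cite[Lemma 4.3]{B2}. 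Your algebraic framing is sound: the reduction from the lemma to the existence of a path $r'$ with $\overbar{r'}=\overbar{s}/\overbar{a}$, and the final conclusion $r'a=s$ by injectivity of $\tau$, are both correct.

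The load-bearing step, however, is your claim that a path $r$ in the cancellative algebra $A'$ with $\sigma\mid\overbar{r}$ factors as $r=r'\sigma_{\operatorname{t}(r)}$ for some path $r'$, and this is where a gap remains. A version of this divisibility fact (for cycles) is exactly what the paper cites as \cite[Lemma 2.1]{B2} in its own reduction, so the statement is very plausibly available; but you have not verified that the cited result covers open paths and not only cycles, and your own sketch does not fill this in. The ``enclosing a face'' argument you describe is an argument about closed curves: here $r$ goes from $\operatorname{h}(a)$ to $\operatorname{t}(a)$, so its lift $r^+$ is an open arc in $\mathbb{R}^2$ and does not bound a region, and the enclosure step does not apply as stated. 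One would need either a direct argument along $r^+$ showing that the arrows meeting every simple matching can be commuted, via the relations $I'$, into a contiguous unit-cycle subword, or a reduction of the path case to the cycle case by closing $r$ up and then cancelling --- and the latter starts to look like a form of the very lemma being proved. In short, your approach has not avoided the paper's hard geometric content (the intersection dichotomy for $s^+$ and $t^+$); it has relocated it into the key divisibility claim, which remains to be proved or located (for paths, not just cycles) in \cite{B2}.
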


\begin{proof}
We use the notation in \cite[Notation 2.1]{B3}.
Suppose the hypotheses hold.\footnote{This proof is similar to \cite[Claim (i) in proof of Lemma 2.4]{B3}.}
It suffices to assume $\sigma \nmid \overbar{s}$ by \cite[Lemma 2.1]{B2}.
Whence $s \in \hat{\mathcal{C}}$ by \cite[Lemma 4.8.3]{B2}.
Let $u \in \mathbb{Z}^2$ be such that $s \in \hat{\mathcal{C}}^u$.
Since $A'$ is cancellative, for each $i \in Q'_0$ we have
\begin{equation} \label{hmm}
\hat{\mathcal{C}}^u_i \not = \emptyset,
\end{equation}
by \cite[Proposition 4.10]{B2}.
Consider $t \in \hat{\mathcal{C}}^u_{\operatorname{h}(a)}$.
Then $\overbar{s} = \overbar{t}$ by \cite[Proposition 4.20.2]{B2}.

Now the paths $(as)^+$ and $(ta)^+$ bound a compact region 
$$\mathcal{R}_{as,ta} \subset \mathbb{R}^2.$$
Furthermore, since $A'$ is cancellative, if a cycle $p$ is formed from subpaths of cycles in $\hat{\mathcal{C}}^u$, then $p$ is in $\hat{\mathcal{C}}^u$, by \cite[Proposition 4.20.3]{B2}.
Therefore we may suppose that the interior of $\mathcal{R}_{as,ta}$ does not contain any vertices of $Q'^+$, by (\ref{hmm}).

Assume to the contrary that $s^+$ and $t^+$ do not intersect (modulo $I$). 
Then $a$ is contained in a simple matching $D$ of $A'$ such that $x_D \nmid \overbar{s}$, by \cite[Lemma 4.15]{B2}; see Figure \ref{figure for one more}.i.
In particular, $x_D \mid \overbar{a}$.
But by assumption, $\overbar{a} \mid \overbar{s}$.
Thus $x_D \mid \overbar{s}$, a contradiction.

Therefore $s^+$ and $t^+$ intersect at a vertex $i^+$; see Figure \ref{figure for one more}.ii. 
By assumption, $\sigma \nmid \overbar{s} = \overbar{t}$.
Whence $\sigma \nmid \overbar{as}$ and $\sigma \nmid \overbar{ta}$ since $\overbar{a} \mid \overbar{s} = \overbar{t}$.
Thus
$$\overbar{s}_1 = \overbar{t}_1 \overbar{a} \ \ \ \text{ and } \ \ \ \overbar{a}\overbar{s}_2 = \overbar{t}_2,$$
by \cite[Lemma 4.3]{B2}.
Consequently,
$$\overbar{s_2 t_1 a} = \overbar{s}_2 \overbar{s}_1 = \overbar{s}.$$
Therefore, since $\tau: A' \to M_{|Q'_0|}(B)$ is injective, we have
$$s_2t_1a = s.$$  
In particular, we may take $p = s_2t_1$.
\end{proof}

\begin{figure}
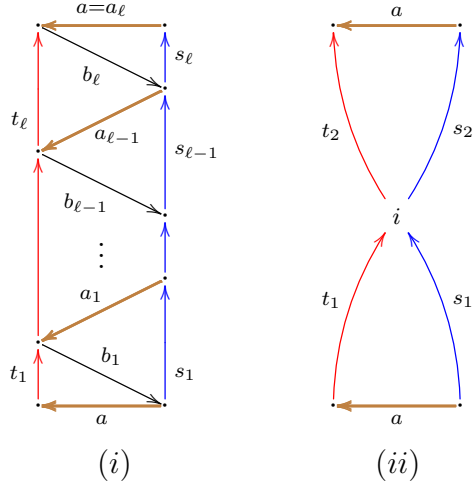

$$\begin{array}{ccc}
\xy 0;/r.4pc/:
(-5,-10)*{\cdot}="2";(5,-5)*{\cdot}="3";
(-5,-15)*{\cdot}="1";(5,-15)*{\cdot}="1'";
(5,0)*{\cdot}="6";(-5,5)*{\cdot}="5";
(0,-2.5)*{\vdots}="";
(5,10)*{\cdot}="7";
(5,-10)*{}="9";(-5,10)*{}="10";
(-5,15)*{\cdot}="8";(5,15)*{\cdot}="8'";
{\ar^{t_1}@[red]"1";"2"};{\ar^{b_1}"2";"1'"};{\ar@{-}_{s_1}@[blue]"1'";"9"};
{\ar^{a}@[brown]"1'";"1"};
{\ar@<.1mm>@[brown]"1'";"1"};
{\ar@<-.1mm>@[brown]"1'";"1"};
{\ar@<.09mm>@[brown]"1'";"1"};
{\ar@<-.09mm>@[brown]"1'";"1"};
{\ar_{a = a_{\ell}}@[brown]"8'";"8"};
{\ar@<.1mm>@[brown]"8'";"8"};
{\ar@<-.1mm>@[brown]"8'";"8"};
{\ar@<.09mm>@[brown]"8'";"8"};
{\ar@<-.09mm>@[brown]"8'";"8"};
{\ar^{a_{\ell-1}}@[brown]"7";"5"};
{\ar@<.1mm>@[brown]"7";"5"};
{\ar@<-.1mm>@[brown]"7";"5"};
{\ar@<.09mm>@[brown]"7";"5"};
{\ar@<-.09mm>@[brown]"7";"5"};
{\ar^{}@[red]"10";"8"};{\ar_{s_{\ell-1}}@[blue]"6";"7"};{\ar@{-}^{t_{\ell}}@[red]"5";"10"};
{\ar_{a_1}@[brown]"3";"2"};
{\ar@<.1mm>@[brown]"3";"2"};
{\ar@<-.1mm>@[brown]"3";"2"};
{\ar@<.09mm>@[brown]"3";"2"};
{\ar@<-.09mm>@[brown]"3";"2"};
{\ar^{}@[blue]"3";"6"};{\ar^{}@[blue]"9";"3"};{\ar^{}@[red]"2";"5"};
{\ar_{b_{\ell}}"8";"7"};{\ar_{b_{\ell-1}}"5";"6"};{\ar_{s_{\ell}}@[blue]"7";"8'"};
\endxy
& \ \ \ \ &
\xy 0;/r.4pc/:
(-5,15)*{\cdot}="8";(5,15)*{\cdot}="8'";
(-5,-15)*{\cdot}="1";(5,-15)*{\cdot}="1'";
(0,0)*+{\text{\scriptsize{$i$}}}="2";
{\ar@/^/^{t_2}@[red]"2";"8"};
{\ar@/^/^{t_1}@[red]"1";"2"};
{\ar@/_/_{s_2}@[blue]"2";"8'"};
{\ar@/_/_{s_1}@[blue]"1'";"2"};
{\ar_{a}@[brown]"8'";"8"};
{\ar@<.1mm>@[brown]"8'";"8"};
{\ar@<-.1mm>@[brown]"8'";"8"};
{\ar@<.09mm>@[brown]"8'";"8"};
{\ar@<-.09mm>@[brown]"8'";"8"};
{\ar^{a}@[brown]"1'";"1"};
{\ar@<.1mm>@[brown]"1'";"1"};
{\ar@<-.1mm>@[brown]"1'";"1"};
{\ar@<.09mm>@[brown]"1'";"1"};
{\ar@<-.09mm>@[brown]"1'";"1"};
\endxy
\\
(i) & & (ii)
\end{array}$$
\caption{Cases for Lemma \ref{one more}.  
In case (i), $s$ and $t$ factor into paths $s = s_{\ell} \cdots s_2s_1$ and $t = t_{\ell} \cdots t_2t_1$, where $a_1, \ldots, a_{\ell}, b_1, \ldots, b_{\ell}$ are arrows, and the cycles $b_ja_js_j$ and $a_{j-1}b_jt_j$ are unit cycles.
The $a_j$ arrows, drawn in thick brown, belong to a simple matching $D$ of $A'$.
In case (ii), $s$ and $t$ factor into paths $s = s_2e_is_1$ and $t = t_2 e_it_1$.}
\label{figure for one more}
\end{figure}

\begin{Proposition} \label{prime2}
For each arrow $a \in Q_1 \setminus Q_1^*$, $\bar{\tau}_{\psi}(e_{\operatorname{t}(a)}Aa)$ is an ideal of $S$ with prime decomposition
\begin{equation} \label{primary dec}
\bar{\tau}_{\psi}(e_{\operatorname{t}(a)}Aa) = \bigcap_{D \in \mathcal{S}' \, : \, x_D \mid \overbar{a}} \mathfrak{q}_D.
\end{equation}
Consequently, the prime decomposition of $\mathfrak{m}_0 \in \operatorname{Max}R$, as an ideal of $S$, is
\begin{equation*} \label{prime dec}
\mathfrak{m}_0 = \bigcap_{a \in Q_1^{\operatorname{t}}} \bar{\tau}_{\psi}(e_{\operatorname{t}(a)}Aa) = \bigcap_{\substack{D \in \mathcal{S}' : \\ x_D \mid \overbar{a} \text{ where } a \in Q_1^{\operatorname{t}}}} \mathfrak{q}_D.
\end{equation*}
\end{Proposition}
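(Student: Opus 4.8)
The plan is to first establish the displayed decomposition (\ref{primary dec}) for one fixed arrow $a \in Q_1 \setminus Q_1^*$, and then deduce the decomposition of $\mathfrak{m}_0$ from it together with the description of the center $R$ in Lemma \ref{over}. The first move is to pass to the cancellative algebra $A'$, where Lemma \ref{one more} is available: since $\psi : A \to A'$ is a surjective $k$-algebra homomorphism with $\psi(a) = a$ (as $a \notin Q_1^*$) and $\psi(e_{\operatorname{t}(a)}) = e_{\operatorname{t}'(a)}$, one has $\psi\big(e_{\operatorname{t}(a)}Aa\big) = e_{\operatorname{t}'(a)}A'a$, and hence $\bar{\tau}_{\psi}\big(e_{\operatorname{t}(a)}Aa\big) = \bar{\tau}\big(e_{\operatorname{t}'(a)}A'a\big)$ because $\bar{\tau}_{\psi} = \bar{\tau}\circ\psi$ on paths.

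Next I would identify this ideal monomially. Since $A'$ is cancellative, $\bar{\tau}\big(e_{\operatorname{t}'(a)}A'e_{\operatorname{t}'(a)}\big) = S$ (the identity used in the proof of Lemma \ref{over}.2), so the $\bar{\tau}$-images of the cycles based at $\operatorname{t}'(a)$ form a $k$-basis of $S$ consisting of monomials of $B$. Now $\bar{\tau}\big(e_{\operatorname{t}'(a)}A'a\big)$ is the $k$-span of the monomials $\overbar{p}\,\overbar{a}$ with $p \in e_{\operatorname{t}'(a)}A'e_{\operatorname{h}'(a)}$, each divisible by $\overbar{a}$; conversely, if a basis monomial $M$ of $S$ is divisible by $\overbar{a}$, write $M = \overbar{s}$ for a cycle $s$ based at $\operatorname{t}'(a)$, and apply Lemma \ref{one more} to factor $s = pa$, giving $M = \overbar{p}\,\overbar{a}$. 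Thus $\bar{\tau}_{\psi}\big(e_{\operatorname{t}(a)}Aa\big)$ is the $k$-span of all monomials of $S$ divisible by $\overbar{a}$; this span is stable under multiplication by monomials of $S$, hence is an ideal of $S$, and by definition of $\mathfrak{q}_g$ it equals $\mathfrak{q}_{\overbar{a}}$. Finally, $\overbar{a} = \prod_{D\in\mathcal{S}' : a\in D} x_D$ is squarefree and $B$ is a polynomial ring, so a monomial is divisible by $\overbar{a}$ precisely when it is divisible by each $x_D$ with $a\in D$; passing to $k$-spans inside $S$ yields $\mathfrak{q}_{\overbar{a}} = \bigcap_{D : x_D \mid \overbar{a}}\mathfrak{q}_D$, with each $\mathfrak{q}_D$ prime by Lemma \ref{prime1}. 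Since $x_D \mid \overbar{a} \Leftrightarrow a\in D$, this is (\ref{primary dec}).

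For the decomposition of the origin, start from $R = k\big[\bigcap_{i\in Q_0}\bar{\tau}_{\psi}(e_iAe_i)\big] = \bigcap_{i\in Q_0}\bar{\tau}_{\psi}(e_iAe_i)$ (Lemma \ref{over}.2; the intersection of subalgebras is a subalgebra). If $\operatorname{deg}^+ i \geq 2$, this factor is $S$ by Lemma \ref{over}.4. If $\operatorname{deg}^+ i = 1$, then the unique arrow $a$ with tail $i$ has $\operatorname{deg}^+\operatorname{t}(a) = 1$, so $a \in Q_1^{\operatorname{t}} \subseteq Q_1\setminus Q_1^*$ by assumption (B); and every positive-length cycle based at $i$ begins with $a$ (the only outgoing arrow of $i$), so $\bar{\tau}_{\psi}(e_iAe_i) = k + \bar{\tau}_{\psi}(e_{\operatorname{t}(a)}Aa)$. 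Since $i \mapsto a$ is a bijection from the indegree-$1$ vertices onto $Q_1^{\operatorname{t}}$,
$$R = \bigcap_{a\in Q_1^{\operatorname{t}}}\big(k + \bar{\tau}_{\psi}(e_{\operatorname{t}(a)}Aa)\big) = k + \bigcap_{a\in Q_1^{\operatorname{t}}}\bar{\tau}_{\psi}(e_{\operatorname{t}(a)}Aa),$$
the last equality because each $\bar{\tau}_{\psi}(e_{\operatorname{t}(a)}Aa) = \mathfrak{q}_{\overbar{a}}$ consists of elements with zero constant term. As $\mathfrak{m}_0 = (x_D : D\in\mathcal{S}')B \cap R$ is exactly the set of elements of $R$ with vanishing constant term, it follows that $\mathfrak{m}_0 = \bigcap_{a\in Q_1^{\operatorname{t}}}\bar{\tau}_{\psi}(e_{\operatorname{t}(a)}Aa)$, which by (\ref{primary dec}) equals $\bigcap_{D : x_D \mid \overbar{a},\ a\in Q_1^{\operatorname{t}}}\mathfrak{q}_D$.

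The step I expect to be the main obstacle is the converse inclusion in the monomial description: that \emph{every} monomial of $S$ divisible by $\overbar{a}$ — not just those that are manifestly of the form $\overbar{p}\,\overbar{a}$ — lies in $\bar{\tau}_{\psi}(e_{\operatorname{t}(a)}Aa)$. This is precisely where the cancellative geometry is needed, via the factorization $s = pa$ of Lemma \ref{one more} (whose own proof rests on the covering quiver). By contrast, the transfer along $\psi$, the passage between divisibility in $B$ and membership in the $\mathfrak{q}_D$, and the bijection between indegree-$1$ vertices and $Q_1^{\operatorname{t}}$ (including the elementary fact that a positive-length cycle at a vertex of outdegree $1$ must begin with that vertex's unique outgoing arrow) are routine once set up carefully.
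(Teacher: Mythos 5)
Your proof is correct and takes essentially the same approach as the paper: apply Lemma \ref{one more} to factor cycles whose $\bar{\tau}$-image is divisible by $\overbar{a}$, identify the ideal monomially, and invoke Lemma \ref{prime1} for primality of the $\mathfrak{q}_D$. Two small remarks are worth making, however.

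First, a technical imprecision: $\psi$ is \emph{not} a $k$-algebra homomorphism. It identifies the head and tail of each $\delta \in Q_1^*$, so distinct orthogonal idempotents $e_{\operatorname{h}(\delta)}, e_{\operatorname{t}(\delta)}$ of $A$ are both sent to $e_{\psi(\operatorname{t}(\delta))}$, whence $\psi(e_{\operatorname{h}(\delta)} e_{\operatorname{t}(\delta)}) = \psi(0) = 0 \neq e_{\psi(\operatorname{t}(\delta))} = \psi(e_{\operatorname{h}(\delta)})\psi(e_{\operatorname{t}(\delta)})$. The paper is careful to call $\psi$ only a ``$k$-linear map'' extended multiplicatively on paths. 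Your transfer equality $\bar{\tau}_{\psi}(e_{\operatorname{t}(a)}Aa) = \bar{\tau}(e_{\operatorname{t}'(a)}A'a)$ is still correct, but it should be justified by the fact that $\bar{\tau}_{\psi} = \bar{\tau}\circ\psi$ on paths together with assumption (A) (which is what lets one lift paths $p' \in e_{\operatorname{t}'(a)}A'e_{\operatorname{h}'(a)}$ back to $A$, possibly inserting $Q_1^*$-arrows of trivial $\bar{\tau}_{\psi}$-image), rather than by surjectivity of a ring map. The paper sidesteps this by applying Lemma \ref{one more} in $A'$ and then lifting, with a case split on $\deg^+\operatorname{t}(a)$ and an appeal to Lemma \ref{over}.4; your single pass at $\operatorname{t}'(a)$ is a bit cleaner but carries exactly the same lifting burden underneath.

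Second, you give an explicit argument for the prime decomposition of $\mathfrak{m}_0$ (the ``Consequently'' clause), which the paper leaves unwritten. Your derivation --- writing $R = \bigcap_i \bar{\tau}_{\psi}(e_iAe_i)$, using Lemma \ref{over}.4 to drop the factors at indegree-$\geq 2$ vertices, reducing the remaining factors to $k + \mathfrak{q}_{\overbar{a}}$, and peeling off the copy of $k$ using that $\mathfrak{m}_0$ is precisely the augmentation ideal --- is correct and a useful addition.
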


\begin{proof}
$\bar{\tau}_{\psi}(e_{\operatorname{t}(a)}Aa)$ is an ideal of $S$ by Lemma \ref{over}.4.
Set $\mathfrak{q}_a := \bigcap_{D \in \mathcal{S}' \, : \, x_D \mid \overbar{a}} \mathfrak{q}_D$.
The inclusion $\bar{\tau}_{\psi}(e_{\operatorname{t}(a)}Aa) \subseteq \mathfrak{q}_a$ is clear.
So suppose $t \in e_jAe_j$ is a cycle such that $\overbar{t} \in \mathfrak{q}_a$, that is, $\overbar{a} \mid \overbar{t}$.
We want to show that $\overbar{t} \in \bar{\tau}_{\psi}(e_{\operatorname{t}(a)}Aa)$.

First suppose $\deg^+ \operatorname{t}(a) \geq 2$.
Then $e_{\operatorname{t}(a)}Ae_{\operatorname{t}(a)} = Se_{\operatorname{t}(a)}$ by Lemma \ref{over}.4.
In particular, there is a cycle $s \in e_{\operatorname{t}(a)}Ae_{\operatorname{t}(a)}$ for which $\overbar{s} = \overbar{t}$.
Furthermore, there is a path $p \in e_{\operatorname{t}(a)}Ae_{\operatorname{h}(a)}$ such that $s = pa$, by Lemma \ref{one more} and assumption (A).

Now suppose $\deg^+ \operatorname{t}(a) = 1$.
Then $\deg^+ \operatorname{h}(a) \geq 2$ by assumption (B).
Whence $e_{\operatorname{h}(a)}Ae_{\operatorname{h}(a)} = Se_{\operatorname{h}(a)}$.
In particular, there is a cycle $s \in e_{\operatorname{h}(a)}Ae_{\operatorname{h}(a)}$ for which $\overbar{s} = \overbar{t}$.
Furthermore, there is a path $p \in e_{\operatorname{t}(a)}Ae_{\operatorname{h}(a)}$ such that $s = ap$, again by Lemma \ref{one more} and assumption (A).

Thus, in either case,  
$$\overbar{t} = \overbar{s} \in \bar{\tau}_{\psi}(e_{\operatorname{t}(a)}Aa).$$
Therefore (\ref{primary dec}) holds.
Finally, each $\mathfrak{q}_D$ is prime by Lemma \ref{prime1}. 
\end{proof}
 
In the following, we show that although the ideal $\mathfrak{q}_D$ may not be principal in $S$, it becomes principal over the localization $S_{\mathfrak{q}_D}$.

\begin{Proposition} \label{principal}
Let $D \in \mathcal{S}'$ and set $\mathfrak{q} := \mathfrak{q}_D$.
Then the maximal ideal $\mathfrak{q}S_{\mathfrak{q}}$ of $S_{\mathfrak{q}}$ is generated by $\sigma$,
$$\mathfrak{q} S_{\mathfrak{q}} = \sigma S_{\mathfrak{q}}.$$
\end{Proposition}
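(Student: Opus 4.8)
The plan is to establish the two inclusions $\sigma S_{\mathfrak{q}} \subseteq \mathfrak{q}S_{\mathfrak{q}}$ and $\mathfrak{q}S_{\mathfrak{q}} \subseteq \sigma S_{\mathfrak{q}}$ separately, where $\mathfrak{q} = \mathfrak{q}_D$. The first is immediate: $\sigma = \prod_{D' \in \mathcal{S}'} x_{D'}$ is a monomial of $S$ divisible by $x_D$, hence, by definition of $\mathfrak{q}_D$, it belongs to $\mathfrak{q}_D$.

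For the reverse inclusion, recall that $\mathfrak{q}_D$ is generated as an ideal of $S$ by the monomials $m \in S$ with $x_D \mid m$, and that (by linear independence of the monomials of the polynomial ring $B$) a monomial of $S$ lies in $\mathfrak{q}_D$ precisely when it is divisible by $x_D$. It therefore suffices to show that every monomial $m \in S$ with $x_D \mid m$ lies in $\sigma S_{\mathfrak{q}}$. I would first record two facts. (a) Every monomial of $S$ is the $\bar{\tau}$-image of a cycle in $A'$: since $A'$ is cancellative, $\bar{\tau}(e_iA'e_i) = S$ for each $i \in Q'_0$ (see the proof of Lemma \ref{over}), so every element of $S$ — and in particular, by linear independence again, every monomial of $S$ — equals $\overbar{p}'$ for some cycle $p'$ in $A'$. (b) If $p'$ is a cycle in $A'$ with $\sigma \mid \overbar{p}'$, then $\overbar{p}'/\sigma \in S$: this is the unit-cycle factoring result \cite[Lemma 2.1]{B2} already used in the proof of Lemma \ref{one more}, which gives $\overbar{p}' = \sigma\,\overbar{p}'_0$ for a cycle $p'_0$ in $A'$.

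Now fix a monomial $m \in S$ with $x_D \mid m$. For each $D' \in \mathcal{S}' \setminus \{D\}$, the argument of Claim (i) in the proof of Lemma \ref{prime1} (with $D', D$ in the roles of $D_i, D_j$) yields a cycle $s_{D'}$ in $A$ with $x_{D'} \mid \overbar{s}_{D'}$ and $x_D \nmid \overbar{s}_{D'}$. Put $h := \prod_{D' \neq D} \overbar{s}_{D'}$, a monomial of $S$. Since $B$ is a polynomial ring, $x_D \nmid h$, so $h \in S \setminus \mathfrak{q}_D$; on the other hand $x_{D'} \mid mh$ for every $D' \neq D$, and $x_D \mid mh$, so $\sigma \mid mh$ in $B$. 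By (a), $mh = \overbar{p}'$ for some cycle $p'$ in $A'$, and then (b) gives $mh/\sigma \in S$. As $h \in S \setminus \mathfrak{q}_D$, we conclude $m = \sigma\,(mh/\sigma)\,h^{-1} \in \sigma S_{\mathfrak{q}}$, completing the argument.

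The routine parts are the linear-independence remarks and the divisibility count for $\sigma \mid mh$. The essential ingredients are the equality $\bar{\tau}(e_iA'e_i) = S$ for the cancellative algebra $A'$ — which supplies both fact (a) and the cycles $s_{D'}$ — together with the unit-cycle factoring lemma \cite[Lemma 2.1]{B2} used in fact (b). I expect the only delicate point to be invoking these in the correct form, rather than any genuinely new argument.
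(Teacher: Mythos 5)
Your proof is correct, and it takes a genuinely different route from the paper's. The paper argues directly inside the quiver $Q$: it writes the monomial $g \in \mathfrak{q}$ as $\overbar{s}$ for a cycle $s$ in $A$, cyclically rotates $s$ to factor it as $pa$ where $a$ is an arrow (or contracted arrow) with $x_D \mid \overbar{a}$ whose tail has indegree at least $2$, then takes the complementary path $b$ with $ba$ a unit cycle (so $x_D \nmid \overbar{b}$) and, via Lemma \ref{green} (which uses assumptions (A) and (B)), a connecting path $t$ with $x_D \nmid \overbar{t}$; this yields $g = \overbar{a}\,\overbar{b}\cdot \overbar{tp}/\overbar{tb} = \sigma \cdot \overbar{tp}/\overbar{tb}$ with $\overbar{tb} \in S \setminus \mathfrak{q}$. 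You instead work entirely at the level of the cycle algebra $S$ and the cancellative algebra $A'$: using Claim (i) from Lemma \ref{prime1} to build a monomial $h \in S \setminus \mathfrak{q}$ divisible by every $x_{D'}$ with $D' \neq D$, then invoking the unit-cycle factoring result from [B2, Lemma 2.1] — which the paper itself cites for exactly this purpose in the proof of Lemma \ref{one more} — to get $mh/\sigma \in S$, hence $m = \sigma\,(mh/\sigma)\,h^{-1} \in \sigma S_{\mathfrak{q}}$.

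What each buys: the paper's version is more explicit and self-contained at the level of paths, and makes visible how assumptions (A) and (B) enter. Your version is shorter and more conceptual, pushing the combinatorics into two already-available tools (the simple-matching argument of Lemma \ref{prime1} and the cycle-factoring lemma), and makes clear that the statement is really about the cancellative algebra $A'$ and its cycle algebra rather than the nonnoetherian $A$. The one point you should make fully precise is the linear-independence step behind your fact (a): since $\bar{\tau}(e_iA'e_i) = S$ exhibits $S$ as the $k$-span of a set of monomials $\{\overbar{p}'\}$ of $B$, and distinct monomials of $B$ are linearly independent, any monomial lying in $S$ must itself be one of the $\overbar{p}'$. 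With that spelled out, the argument is complete.
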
 

\begin{proof}
Let $g \in \mathfrak{q}$ be a nonzero monomial.
Then there is a cycle $s \in A$ with $\overbar{s} = g$.
By possibly cyclically permuting the arrow subpaths of $s$, we may assume $s$ factors into paths $s = pa$, where $x_D \mid \overbar{a}$ and either 
\begin{itemize}
 \item[--] $a \in Q_1 \setminus \left( Q_1^* \cup Q_1^{\operatorname{t}} \right)$, or 
 \item[--] $a = a'\delta$ where $\delta \in Q_1^*$ and $a' \in Q_1^{\operatorname{t}}$.
\end{itemize}
In either case, $\operatorname{deg}^+ \operatorname{t}(a) \geq 2$.

Let $b$ be a path such that $ba$ is a unit cycle.
Then $x_D \nmid \overbar{b}$ since $x_D \mid \overbar{a}$ and $\overbar{ba} = \sigma$.
Furthermore, since $\operatorname{deg}^+ \operatorname{h}(b) = \operatorname{deg}^+ \operatorname{t}(a) \geq 2$, there is a path $t \in e_{\operatorname{t}(b)}Ae_{\operatorname{h}(b)}$ for which $x_D \nmid \overbar{t}$, by Lemma \ref{green}.
In particular, $tp$ and $tb$ are cycles, and $x_D \nmid \overbar{tb}$.
Whence 
$$\overbar{tp} \in S \ \ \ \text{ and } \ \ \ \overbar{tb} \in S \setminus \mathfrak{q}.$$
Therefore
$$g = \overbar{a} \overbar{p} \, \frac{\overbar{tb}}{\overbar{tb}} = \overbar{a} \overbar{b} \, \frac{\overbar{tp}}{\overbar{tb}} = \sigma \, \frac{\overbar{tp}}{\overbar{tb}} \in \sigma S_{\mathfrak{q}}.$$
\end{proof}

Recall that an ideal $I$ is unmixed if for each minimal prime $\mathfrak{q}$ over $I$, $\operatorname{ht}(\mathfrak{q}) = \operatorname{ht}(I)$.

\begin{Theorem} \label{height} \
\begin{enumerate} 
 \item For each $D \in \mathcal{S}'$, the height of $\mathfrak{q}_D$ in $S$ is 1.
 \item The set of minimal primes of $S$ over $\mathfrak{m}_0$ are the ideals $\mathfrak{q}_D \in \operatorname{Spec}S$ for which $D$ contains the $\psi$-image of some $a \in Q_1^{\operatorname{t}}$. 
 \item $\mathfrak{m}_0$ is an unmixed ideal of $S$.
Furthermore, $\mathfrak{m}_0$ has height $1$ as an ideal of $S$ and height $3$ as an ideal of $R$,
$$\operatorname{ht}_S(\mathfrak{m}_0) = 1 \ \ \ \text{ and } \ \ \ \operatorname{ht}_R(\mathfrak{m}_0) = 3.$$
\end{enumerate}
\end{Theorem}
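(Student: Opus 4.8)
The plan is to deduce (1) from the principal ideal theorem, to obtain (2) formally from (1) together with Proposition \ref{prime2}, and then to prove (3) by combining these $S$-side facts with an explicit chain construction in $\operatorname{Spec}R$; the last step --- $\operatorname{ht}_R(\mathfrak{m}_0)=3$ --- is the substantive one.

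\emph{Parts (1) and (2).} The ring $S$ is a finitely generated $k$-algebra, being a depiction of $R$ (Lemma \ref{over}.3), hence noetherian. The element $\sigma$ (the common $\bar{\tau}_{\psi}$-image of the unit cycles) lies in $S$ and is divisible by each $x_D$, so $0\neq\sigma\in\mathfrak{q}_D$ and $\operatorname{ht}_S(\mathfrak{q}_D)\geq 1$; by Proposition \ref{principal}, $\mathfrak{q}_DS_{\mathfrak{q}_D}=\sigma S_{\mathfrak{q}_D}$ is principal, so Krull's principal ideal theorem gives $\operatorname{ht}_S(\mathfrak{q}_D)=\operatorname{ht}(\mathfrak{q}_DS_{\mathfrak{q}_D})\leq 1$, whence equality, proving (1). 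For (2), Proposition \ref{prime2} writes $\mathfrak{m}_0$, as an ideal of $S$, as $\mathfrak{m}_0=\bigcap_{D\in\mathcal{T}}\mathfrak{q}_D$, where $\mathcal{T}=\{D\in\mathcal{S}' : \psi(a)\in D \text{ for some } a\in Q_1^{\operatorname{t}}\}$ (using $x_D\mid\overbar{a}\Leftrightarrow\psi(a)\in D$). Each $\mathfrak{q}_D$ is prime (Lemma \ref{prime1}), so $\mathfrak{m}_0$ is radical in $S$ and its minimal primes in $S$ are the minimal members of $\{\mathfrak{q}_D : D\in\mathcal{T}\}$; for distinct $D,D'$, Claim (i) in the proof of Lemma \ref{prime1} gives a cycle $s$ with $\overbar{s}\in\mathfrak{q}_D\setminus\mathfrak{q}_{D'}$, so these primes are distinct, and since they all have height $1$ in the domain $S$ by (1), none contains another. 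Hence all of them are minimal, proving (2) and showing $\mathfrak{m}_0$ is unmixed of height $1$ in $S$.

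\emph{The equality $\operatorname{ht}_R(\mathfrak{m}_0)=3$.} We have $\dim S=3$ ($S=Z'$ is the center of a cancellative dimer algebra on a torus) and $\dim R=\dim S$ since $S$ depicts $R$ (\cite{B5}), so $\operatorname{ht}_R(\mathfrak{m}_0)\leq\dim R=3$. For the reverse inequality I would contract a chain from $\operatorname{Spec}S$. Let $\mathfrak{q}^*\in\operatorname{Max}S$ be the irrelevant graded maximal ideal; then $\mathfrak{q}^*\supseteq\mathfrak{q}_D$ for all $D$, so $\mathfrak{q}^*\cap R\supseteq\mathfrak{m}_0S\cap R=\mathfrak{m}_0$, hence $\mathfrak{q}^*\cap R=\mathfrak{m}_0$ by maximality, while $\operatorname{ht}_S(\mathfrak{q}^*)=\dim S=3$. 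As $S$ is a graded domain and there are only finitely many $\mathfrak{q}_D$, a general line through the origin of the affine cone $\operatorname{Max}S$ gives a height-$2$ prime $\mathfrak{Q}_2\subsetneq\mathfrak{q}^*$ with $\mathfrak{Q}_2\not\supseteq\mathfrak{q}_D$ for every $D$; I then choose a height-$1$ prime $\mathfrak{Q}_1\subsetneq\mathfrak{Q}_2$ (so also $\mathfrak{Q}_1\not\supseteq$ any $\mathfrak{q}_D$) and set $\mathfrak{p}_i:=\mathfrak{Q}_i\cap R$, so $\mathfrak{p}_1\subseteq\mathfrak{p}_2\subseteq\mathfrak{m}_0$. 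It remains to verify strictness: (a) $\mathfrak{p}_2\subsetneq\mathfrak{m}_0$, since $\mathfrak{p}_2=\mathfrak{m}_0$ would force $\mathfrak{Q}_2\supseteq\mathfrak{m}_0S=\bigcap_{D\in\mathcal{T}}\mathfrak{q}_D$ and hence $\mathfrak{Q}_2\supseteq\mathfrak{q}_D$ for some $D$; (b) $\mathfrak{p}_1\neq 0$, because $\operatorname{Frac}R=\operatorname{Frac}S$, so a nonzero prime of $S$ meets $R$ nontrivially; and (c) $\mathfrak{p}_1\subsetneq\mathfrak{p}_2$: if $\mathfrak{p}_1=\mathfrak{p}_2=:\mathfrak{p}$, then since $\mathcal{Z}(\mathfrak{Q}_2)$ is irreducible and, by (a), not contained in the collapsed locus $\mathcal{Z}(\mathfrak{m}_0S)=\bigcup_{D\in\mathcal{T}}\mathcal{Z}(\mathfrak{q}_D)$, some $\mathfrak{N}\in\operatorname{Max}S$ with $\mathfrak{N}\supseteq\mathfrak{Q}_2$ has $\mathfrak{n}:=\mathfrak{N}\cap R\neq\mathfrak{m}_0$; by the depiction hypothesis and \cite[Section 3]{B6}, $R_{\mathfrak{n}}$ is noetherian and $R_{\mathfrak{n}}=S_{\mathfrak{N}}$, so localizing at $\mathfrak{p}\subseteq\mathfrak{n}$ gives $(R\setminus\mathfrak{p})^{-1}S=R_{\mathfrak{p}}$, which is local; but $\mathfrak{Q}_1\subsetneq\mathfrak{Q}_2$ both meet $R$ in $\mathfrak{p}$, so they extend to distinct primes of $(R\setminus\mathfrak{p})^{-1}S$ lying over $\mathfrak{p}R_{\mathfrak{p}}$, which is impossible in a local ring. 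Hence $0\subsetneq\mathfrak{p}_1\subsetneq\mathfrak{p}_2\subsetneq\mathfrak{m}_0$ and $\operatorname{ht}_R(\mathfrak{m}_0)\geq 3$.

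The main obstacle is precisely this last inequality: because $R_{\mathfrak{m}_0}$ is non-noetherian, $\operatorname{ht}_R(\mathfrak{m}_0)$ cannot be read off from $S$ (where $\mathfrak{m}_0$ has height $1$), so one must produce a genuine length-$3$ chain in $\operatorname{Spec}R$, and the delicate points are controlling how $\iota_{S/R}$ collapses $\mathcal{Z}(\mathfrak{m}_0S)$ to the single point $\mathfrak{m}_0$ while remaining a local isomorphism off that locus. Along the way one should confirm the auxiliary facts used --- $\operatorname{Frac}R=\operatorname{Frac}S$ and the identification $\operatorname{Max}S\setminus\mathcal{Z}(\mathfrak{m}_0S)\cong\operatorname{Max}R\setminus\{\mathfrak{m}_0\}$ --- which are furnished by the depiction structure together with \cite{B5,B6}.
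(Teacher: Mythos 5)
Your parts (1) and (2) coincide with the paper's argument: Proposition \ref{principal} plus Krull's principal ideal theorem give height 1 for each $\mathfrak{q}_D$, and (2) is then formal from Proposition \ref{prime2} and Lemma \ref{prime1}. Part (3) is where you diverge. The paper disposes of $\operatorname{ht}_R(\mathfrak{m}_0)=3$ in two lines: it cites \cite[Lemma 3.7.2]{B5}, which says that for any ring admitting a depiction the height of every maximal ideal equals the Krull dimension, together with $\dim R = 3$ from \cite[Theorem 1.1]{B6}. You instead build an explicit saturated chain $0 \subsetneq \mathfrak{p}_1 \subsetneq \mathfrak{p}_2 \subsetneq \mathfrak{m}_0$ by contracting a chain $\mathfrak{Q}_1 \subsetneq \mathfrak{Q}_2 \subsetneq \mathfrak{q}^*$ from $\operatorname{Spec}S$ chosen to avoid the collapsed locus $\bigcup_D \mathcal{Z}(\mathfrak{q}_D)$, and then verify strictness using the local identification $R_{\mathfrak{n}} = S_{\mathfrak{N}}$ away from $\mathfrak{m}_0$. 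This is, in effect, a self-contained proof of the special case of \cite[Lemma 3.7.2]{B5} that is needed, and it makes explicit the geometric mechanism (height jumps precisely because $\iota_{S/R}$ collapses the codimension-1 locus $\mathcal{Z}(\mathfrak{m}_0)$ to a point while being a local isomorphism elsewhere). Your strictness argument (c) is sound: once $\mathfrak{N} \supseteq \mathfrak{Q}_2$ is chosen with $\mathfrak{n} := \mathfrak{N}\cap R \neq \mathfrak{m}_0$, the equality $R_{\mathfrak{n}} = S_{\mathfrak{N}}$ from the depiction definition makes $\mathfrak{Q}\mapsto\mathfrak{Q}\cap R$ injective on primes contained in $\mathfrak{N}$, and (b) follows from $\operatorname{Frac}R=\operatorname{Frac}S$, which does hold because $R = k + \mathfrak{m}_0$ with $\mathfrak{m}_0$ a nonzero ideal of $S$.

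The one place you should tighten is the construction of $\mathfrak{Q}_2$. The phrase ``a general line through the origin of the affine cone'' is geometric shorthand rather than an argument: it presupposes a grading and a parameter count of lines that you haven't set up. A cleaner route, staying purely in noetherian commutative algebra, is prime avoidance: choose $f \in \mathfrak{q}^* \setminus \bigcup_D\mathfrak{q}_D$, let $\mathfrak{Q}_1$ be a minimal prime of $(f)$ contained in $\mathfrak{q}^*$ (height 1 by Krull, and $\neq$ any $\mathfrak{q}_D$ since $f\notin\mathfrak{q}_D$ but two distinct height-1 primes cannot be nested); then the ``bad'' height-2 primes $\mathfrak{Q}\subsetneq\mathfrak{q}^*$ containing $\mathfrak{Q}_1$ and some $\mathfrak{q}_D$ are among the finitely many minimal primes of $\mathfrak{Q}_1+\mathfrak{q}_D$, so again prime avoidance in $(S/\mathfrak{Q}_1)_{\mathfrak{q}^*/\mathfrak{Q}_1}$ produces $\mathfrak{Q}_2$ avoiding them. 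With that substitution the chain construction is rigorous, and the overall strategy is a legitimate (and more transparent) alternative to the paper's citation of \cite[Lemma 3.7.2]{B5}.
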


\begin{proof}
(1) Set $\mathfrak{q} := \mathfrak{q}_D$.
Then
$$1 \stackrel{\textsc{(i)}}{\leq} \operatorname{ht}_S(\mathfrak{q}) = \operatorname{ht}_{S_{\mathfrak{q}}}(\mathfrak{q}S_{\mathfrak{q}}) \stackrel{\textsc{(ii)}}{=} \operatorname{ht}_{S_{\mathfrak{q}}}(\sigma S_{\mathfrak{q}}) \stackrel{\textsc{(iii)}}{\leq} 1.$$
Indeed, (\textsc{i}) holds since $S$ is an integral domain and $\mathfrak{q}$ is nonzero; (\textsc{ii}) holds by Proposition \ref{principal}; and (\textsc{iii}) holds by Krull's principal ideal theorem.

(2) Follows from Claim (1) and Proposition \ref{prime2}.

(3) $\mathfrak{m}_0$ is a height 1 unmixed ideal of $S$ by Claims (1) and (2), and Proposition \ref{prime2}.
Furthermore, $R$ admits a depiction by Lemma \ref{over}.3.
Thus the height of each maximal ideal of $R$ equals the Krull dimension of $R$ by \cite[Lemma 3.7.2]{B5}.
But the Krull dimension of $R$ is 3 by \cite[Theorem 1.1]{B6}.
Therefore $\operatorname{ht}_R(\mathfrak{m}_0) = 3$.
\end{proof}

\begin{Question} \rm{
Let $K$ be the function field of an algebraic variety.
As shown in Theorem \ref{height}.3, a subset $\mathfrak{p}$ of $K$ may be an ideal in different subalgebras of $K$, and the height of $\mathfrak{p}$ depends on the choice of such subalgebra.
Is the geometric height of $\mathfrak{p}$ independent of the choice of subalgebra for which $\mathfrak{p}$ is an ideal?
If this is the case, then the geometric height would be an intrinsic property of an ideal, whereas its height would not be.
}\end{Question}

The center and cycle algebra of $A_{\mathfrak{m}_0} := A \otimes_R R_{\mathfrak{m}_0}$ are respectively
\begin{equation*} \label{iloveaidan}
Z(A_{\mathfrak{m}_0}) \cong R \otimes_R R_{\mathfrak{m}_0} \cong R_{\mathfrak{m}_0} \ \ \ \text{ and } \ \ \ S \otimes_R R_{\mathfrak{m}_0} \cong SR_{\mathfrak{m}_0}.
\end{equation*}

\begin{Proposition} \label{ngd}
The cycle algebra $SR_{\mathfrak{m}_0}$ of $A_{\mathfrak{m}_0}$ is a normal Gorenstein domain.
\end{Proposition}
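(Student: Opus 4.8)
The plan is to reduce the statement to the assertion that the cycle algebra $S$ is itself a normal Gorenstein domain, and then to note that both properties pass to the localization in question. Recall from the discussion preceding the proposition that $SR_{\mathfrak{m}_0} = S \otimes_R R_{\mathfrak{m}_0}$ is, concretely, the localization $S[(R \setminus \mathfrak{m}_0)^{-1}]$ of $S$ at the multiplicative set $R \setminus \mathfrak{m}_0 \subset S$ (this set consists of nonzero elements since $R \hookrightarrow S$ and $S$ is a domain). Since $S$ depicts $R$ by Lemma \ref{over}.3, it is a finitely generated $k$-algebra and an integral domain, so $SR_{\mathfrak{m}_0}$ is a Noetherian domain with fraction field $\operatorname{Frac}S$. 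It therefore suffices to show that $S$ is a normal Gorenstein domain: a localization of an integrally closed domain is again integrally closed in the same fraction field; and if $S_{\mathfrak{q}}$ is Gorenstein local for every $\mathfrak{q} \in \operatorname{Spec}S$, then the same holds for every localization of $S$, because the primes of $S[(R\setminus\mathfrak{m}_0)^{-1}]$ are precisely the primes $\mathfrak{q}$ of $S$ missing $R\setminus\mathfrak{m}_0$, and localizing once more at such a prime returns $S_{\mathfrak{q}}$.

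To see that $S$ is a normal Gorenstein domain, I would use that $A'$ is cancellative, so by Lemma \ref{over}.2 the cycle algebra $S$ is isomorphic to the center $Z'$ of $A'$. A cancellative (equivalently, noetherian) dimer algebra on a torus is a noncommutative crepant resolution of its center \cite{Br,Bo,D,B4,B7}; since by definition the center of an NCCR is a normal Gorenstein domain, $Z' \cong S$ is a normal Gorenstein domain --- indeed a three-dimensional affine toric Gorenstein singularity, in agreement with $\dim R = 3$ from Theorem \ref{height}.3. Combining this with the two descent facts above shows that $SR_{\mathfrak{m}_0}$ is a normal Gorenstein domain.

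The argument is short, so there is no serious technical obstacle; the one point that must be handled with a little care is the reduction in the first paragraph, namely that normality and the Gorenstein property descend to $S[(R \setminus \mathfrak{m}_0)^{-1}]$ even though $R \setminus \mathfrak{m}_0$ is not the complement of a prime ideal of $S$. This is standard commutative algebra, but worth stating explicitly. If one prefers not to quote the NCCR literature, the normality and Gorenstein property of $Z'$ also follow from its explicit description as the semigroup ring of the Gorenstein cone determined by the perfect matchings of $A'$.
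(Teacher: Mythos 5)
Your proof is correct and follows essentially the same route as the paper: both arguments reduce to the fact that $S \cong Z(A')$ is the center of the noetherian NCCR $A'$, hence a normal Gorenstein domain, and then transport these properties to $SR_{\mathfrak{m}_0}$. The one cosmetic difference is that the paper verifies $(SR_{\mathfrak{m}_0})_{\mathfrak{t}} = S_{\mathfrak{t}\cap S}$ by hand with a short case analysis (using Theorem \ref{height}.3 to handle the case $\mathfrak{q}=0$), whereas you simply observe that $SR_{\mathfrak{m}_0} = S[(R\setminus\mathfrak{m}_0)^{-1}]$ and invoke the standard facts that normality and the Gorenstein property are preserved by arbitrary localization of a noetherian domain; your phrasing is a bit cleaner but the content is the same.
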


\begin{proof}
Let $\mathfrak{t} \in \operatorname{Spec}(SR_{\mathfrak{m}_0})$ and set $\mathfrak{q} := \mathfrak{t} \cap S$.

(i) We claim that 
$$(SR_{\mathfrak{m}_0})_{\mathfrak{t}} = S_{\mathfrak{q}}.$$
Clearly $(SR_{\mathfrak{m}_0})_{\mathfrak{t}} = S_{\mathfrak{q}}R_{\mathfrak{m}_0}$.\footnote{To show this, note that the elements of $SR_{\mathfrak{m}_0}$ are of the form $s/r$, with $s \in S$ and $r \in R \setminus \mathfrak{m}_0$.
Thus an element of $(SR_{\mathfrak{m}_0})_{\mathfrak{t}}$ is of the form $\frac{s_1}{r_1} (\frac{s_2}{r_2})^{-1}$, with $s_1,s_2 \in S$, $r_1, r_2 \in R \setminus \mathfrak{m}_0$, and $\frac{s_2}{r_2} \not \in \mathfrak{t}$.
Furthermore, $\frac{s_2}{r_2} \not \in \mathfrak{t}$ and (\ref{moving on}) together imply $s_2 \not \in \mathfrak{t}$.
Whence 
$$s_2 \in S \setminus (\mathfrak{t} \cap S) = S \setminus \mathfrak{q}.$$
Therefore
$$\frac{s_1}{r_1}\left(\frac{s_2}{r_2} \right)^{-1} = \frac{s_1r_2}{s_2} \cdot \frac{1}{r_1} \in S_{\mathfrak{q}}R_{\mathfrak{m}_0}.$$} 
It thus suffices to show that 
\begin{equation} \label{ilovekael}
S_{\mathfrak{q}}R_{\mathfrak{m}_0} = S_{\mathfrak{q}}.
\end{equation}
Indeed, we have
\begin{equation} \label{moving on}
\mathfrak{t} \cap R \subseteq \mathfrak{m}_0.
\end{equation}
Thus if $\mathfrak{m}_0 \subseteq \mathfrak{q}$, then $\mathfrak{q} \cap R = \mathfrak{m}_0$.
Whence $R_{\mathfrak{m}_0} \subseteq S_{\mathfrak{q}}$. 
In particular, $S_{\mathfrak{q}}R_{\mathfrak{m}_0} = S_{\mathfrak{q}}$.
Otherwise $\mathfrak{q} = 0 \subset \mathfrak{m}_0$ by Theorem \ref{height}.3; whence
$$S_{\mathfrak{q}}R_{\mathfrak{m}_0} = (\operatorname{Frac}S) R_{\mathfrak{m}_0} = \operatorname{Frac}S = S_{\mathfrak{q}}.$$
Therefore in either case (\ref{ilovekael}) holds, proving our claim.

(ii) $S$ is isomorphic to the center of $A'$ by Lemma \ref{over}.2.
Thus $S$ is a normal Gorenstein domain since $A'$ is an NCCR.
Whence $S_{\mathfrak{q}}$ is a normal Gorenstein domain.
But $(SR_{\mathfrak{m}_0})_{\mathfrak{t}} = S_{\mathfrak{q}}$ by Claim (i).
Therefore $(SR_{\mathfrak{m}_0})_{\mathfrak{t}}$ is a normal Gorenstein domain.
Since this holds for all $\mathfrak{t} \in \operatorname{Spec}(SR_{\mathfrak{m}_0})$, $SR_{\mathfrak{m}_0}$ is also a normal Gorenstein domain. 
\end{proof}

\section{Cycle regularity}

Recall that $A$ is a nonnoetherian homotopy algebra satisfying assumptions (A) and (B) given in Section \ref{section 4}, unless stated otherwise.
Let $\mathfrak{q} \in \operatorname{Spec}S$ be a minimal prime over the origin $\mathfrak{m}_0$ of $\operatorname{Max}R$; then there is a simple matching $D \in \mathcal{S}'$ such that $\mathfrak{q} = \mathfrak{q}_D$, by Proposition \ref{prime2}.
In this section, we will consider the cyclic localization $A_{\mathfrak{q}}$ of $A$ at $\mathfrak{q}$.

The algebra homomorphism $\tau_{\psi}: A \hookrightarrow M_{|Q_0|}(B)$ extends to the cyclic localization, $\tau_{\psi}: A_{\mathfrak{q}} \hookrightarrow M_{|Q_0|}(\operatorname{Frac}B)$. 
For $p \in e_jA_{\mathfrak{q}}e_i$, we will denote by $\bar{\tau}_{\psi}(p) = \overbar{p} \in \operatorname{Frac}B$ the single nonzero matrix entry of $\tau_{\psi}(p)$.

We begin by showing that a notion of homological regularity cannot be obtained by considering the central localization $A_{\mathfrak{m}_0} := A \otimes_R R_{\mathfrak{m}_0}$ alone.

\begin{Proposition} \label{infinite global dimension}
The $A_{\mathfrak{m}_0}$-module $A_{\mathfrak{m}_0}/\mathfrak{m}_0 = A \otimes_R (R_{\mathfrak{m}_0}/\mathfrak{m}_0)$ has infinite projective dimension, and therefore $A_{\mathfrak{m}_0}$ has infinite global dimension.
\end{Proposition}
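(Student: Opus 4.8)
The plan is to isolate a vertex $i \in Q_0$ with $\operatorname{deg}^+ i = 1$ — one exists because $A$ nonnoetherian forces $Q$ non-cancellative, hence $Q_1^* \neq \emptyset$ by assumption (A), so some arrow has head of indegree $1$ — and to build, for the direct summand $A_{\mathfrak{m}_0}e_i / \mathfrak{m}_0 A_{\mathfrak{m}_0}e_i$ of $A_{\mathfrak{m}_0}/\mathfrak{m}_0 = \bigoplus_{j} A_{\mathfrak{m}_0}e_j/\mathfrak{m}_0 A_{\mathfrak{m}_0}e_j$, a projective resolution over $A_{\mathfrak{m}_0}$ that never terminates. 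It suffices to treat this one summand, since a module has infinite projective dimension as soon as a direct summand does, and then $\operatorname{gldim}A_{\mathfrak{m}_0} \geq \operatorname{pd}_{A_{\mathfrak{m}_0}}(A_{\mathfrak{m}_0}/\mathfrak{m}_0) = \infty$.

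First I would describe the quiver locally at $i$. Let $a \in Q_1^*$ be the unique arrow with $\operatorname{h}(a) = i$ and $b \in Q_1$ the unique arrow with $\operatorname{t}(b) = i$ (indegree and outdegree agree at each vertex); by assumption (B), both $\operatorname{t}(a)$ and $\operatorname{h}(b)$ have indegree at least $2$, so by Lemma~\ref{over}.4 the corner rings $e_{\operatorname{t}(a)}A_{\mathfrak{m}_0}e_{\operatorname{t}(a)}$ and $e_{\operatorname{h}(b)}A_{\mathfrak{m}_0}e_{\operatorname{h}(b)}$ both equal $SR_{\mathfrak{m}_0}$, while $\overbar{a} = 1$. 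The first syzygy of $A_{\mathfrak{m}_0}e_i/\mathfrak{m}_0 A_{\mathfrak{m}_0}e_i$ is $\mathfrak{m}_0 A_{\mathfrak{m}_0}e_i = \bigoplus_{k}\mathfrak{m}_0(e_k A_{\mathfrak{m}_0}e_i)$, and the next syzygies are assembled, by right multiplication by arrows, from the projectives $A_{\mathfrak{m}_0}e_{\operatorname{t}(a)}$ and $A_{\mathfrak{m}_0}e_{\operatorname{h}(b)}$, the homotopy relations, and the ideal $\mathfrak{m}_0$. Here the computational inputs are Proposition~\ref{prime2} (so $\overbar{\tau}_\psi(e_{\operatorname{t}(a)}Aa) = \mathfrak{q}_{D}$ for the simple matchings $D \ni \psi(a')$, $a' \in Q_1^{\operatorname{t}}$) and Proposition~\ref{principal} (so $\mathfrak{q}_D S_{\mathfrak{q}_D} = \sigma S_{\mathfrak{q}_D}$), which let one identify each syzygy explicitly inside $M_{|Q_0|}(\operatorname{Frac}B)$.

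The crux is non-termination, and I expect it to come from periodicity. The arrow $a$ with $\overbar{a} = 1$ identifies columns $i$ and $\operatorname{t}(a)$ of the tiled matrix ring, so the syzygy computation near $i$ is "self-similar"; combined with the fact that $\mathfrak{m}_0$ is very far from principal in $R_{\mathfrak{m}_0}$ — indeed $\operatorname{ht}_R(\mathfrak{m}_0) = 3 > 1 = \operatorname{ght}_R(\mathfrak{m}_0)$ by Theorem~\ref{height}, so $R_{\mathfrak{m}_0}$ is highly non-regular — I would show that some higher syzygy $\Omega^{k}(A_{\mathfrak{m}_0}e_i/\mathfrak{m}_0 A_{\mathfrak{m}_0}e_i)$ is isomorphic to an earlier one (equivalently, to a fixed module $W$ with $W \cong \Omega^{m}W$ for some $m \geq 1$). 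The point is that the element $\sigma \in \mathfrak{m}_0$ only generates the maximal ideal after passing to the cyclic localization at $\mathfrak{q}_D$; over $R_{\mathfrak{m}_0}$ the ideal $\sigma R_{\mathfrak{m}_0}$ is strictly smaller than $\mathfrak{m}_0$ and is not prime, so the "$\sigma$-part" of each syzygy reappears in the next one without ever becoming projective. Non-projectivity of the recurring $W$ I would certify by passing to the cyclic localization $A_{\mathfrak{q}_D}$, where Proposition~\ref{principal} (and the structure used in Theorem~\ref{big theorem 1}) gives concrete control, and checking $W_{\mathfrak{q}_D}$ is not $A_{\mathfrak{q}_D}$-projective; alternatively, it suffices to exhibit a non-finitely-generated submodule of $\Omega^{k}$ for arbitrarily large $k$, which already precludes projectivity.

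The main obstacle is exactly this last step. Because $A_{\mathfrak{m}_0}$ is nonnoetherian there is no minimal projective resolution and no Nakayama lemma, so one cannot argue abstractly that syzygies stabilize or fail to vanish; each syzygy must be located explicitly inside the matrix ring over $B$ and its non-projectivity (or non-finite-generation) verified by hand, using Lemma~\ref{over}, Proposition~\ref{prime2}, and Proposition~\ref{principal}. If making the periodicity precise proves awkward, a fallback is to restrict a hypothetical finite resolution of $A_{\mathfrak{m}_0}/\mathfrak{m}_0$ to the central subring $R_{\mathfrak{m}_0}$ to obtain a finite resolution of a direct sum of copies of $k$ by restricted $A_{\mathfrak{m}_0}$-modules, and then derive a contradiction from $\operatorname{Tor}^{R_{\mathfrak{m}_0}}_n(k,k) \neq 0$ for all $n$ — which one reads off from the explicit form of $R$ and $\mathfrak{m}_0$, noting $\mathfrak{m}_0 \supsetneq \mathfrak{m}_0^2$ with $\mathfrak{m}_0/\mathfrak{m}_0^2$ infinite-dimensional over $k$, so no Koszul-type complex on a finite generating system can resolve $k$ — though this route additionally requires sufficient flatness of $A$ over $R$ and so is less self-contained.
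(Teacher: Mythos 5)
Your plan takes a genuinely different route from the paper, and both of its branches are left with real gaps. The paper's own argument is more elementary and more direct. By [B8, Lemmas 6.1 and 6.2] there exist monomials $g,h \in S$ with $h^n \notin R$ and $gh^n \in \mathfrak{m}_0$ for every $n \geq 1$; hence there is a vertex $i$ with $h^n \notin \bar{\tau}_{\psi}(e_iAe_i)$ for all $n$. The cycles $s_n \in e_iAe_i$ with $\overbar{s}_n = gh^n$ all lie in the first syzygy $\ker(\cdot 1) = \operatorname{ann}_{A_{\mathfrak{m}_0}}(A_{\mathfrak{m}_0}/\mathfrak{m}_0)$, which is therefore \emph{not} finitely generated over $A_{\mathfrak{m}_0}$, because one can never peel off the factor $h$ by an element of the corner ring. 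The $s_n$ commute pairwise, so the infinitely many independent commutation relations among them give rise to an infinite, non-finitely-generated family of relations at the next syzygy and at every later stage, forcing $\operatorname{pd}_{A_{\mathfrak{m}_0}}(A_{\mathfrak{m}_0}/\mathfrak{m}_0) = \infty$. The mechanism is thus non-finite-generation persisting up the resolution, propelled by the commuting family $\{s_n\}_{n \geq 1}$, not periodicity of syzygies.

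Regarding your two branches: the periodicity claim ($\Omega^k \cong \Omega^m$ for some $k < m$ with $\Omega^k$ not projective) is asserted as a target but never achieved, and you flag this yourself; moreover there is no structural reason to expect it here — the obstruction to finite projective dimension is that the syzygies are \emph{non-finitely-generated}, not that they eventually repeat, so a Serre-style periodicity argument is aiming at the wrong phenomenon. Your fallback via restriction of scalars has a concealed hypothesis: to pass from a finite $A_{\mathfrak{m}_0}$-resolution of $A_{\mathfrak{m}_0}/\mathfrak{m}_0$ to control on $\operatorname{Tor}^{R_{\mathfrak{m}_0}}_{\bullet}(k,k)$ you need the restriction functor to send projective $A_{\mathfrak{m}_0}$-modules to $R_{\mathfrak{m}_0}$-modules of finite projective dimension, which requires $A_{\mathfrak{m}_0}$ itself to have finite projective dimension over $R_{\mathfrak{m}_0}$. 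But $A_{\mathfrak{m}_0}$ is an \emph{infinitely generated} $R_{\mathfrak{m}_0}$-module in the nonnoetherian case, and you do not establish — and the paper certainly does not supply — any projectivity or finite-projective-dimension statement for $A_{\mathfrak{m}_0}$ over its center. Your observation that $\mathfrak{m}_0/\mathfrak{m}_0^2$ is infinite-dimensional is correct and points at the right phenomenon, but by itself it does not complete either branch. In short, both of your routes are missing the key step; the step the paper actually uses is the explicit infinite commuting family $\{s_n\}$ whose $\bar{\tau}_\psi$-images $gh^n$ lie in $\mathfrak{m}_0$ while $h^n$ escapes $\bar{\tau}_\psi(e_iAe_i)$.
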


\begin{proof}
By \cite[Lemmas 6.1 and 6.2]{B8}, there are monomials $g,h \in S$ such that for each $n \geq 1$, 
$$h^n \not \in R \ \ \ \text{ and } \ \ \ gh^n \in \mathfrak{m}_0 \subset R.$$
In particular, there is a vertex $i \in Q_0$ such that for each $n \geq 1$, 
$$h^n \not \in \bar{\tau}_{\psi}(e_iAe_i).$$

Let $s_n$ be the cycle in $e_iAe_i$ satisfying $\overbar{s}_n = gh^n$.
Consider a projective resolution of $A_{\mathfrak{m}_0}/\mathfrak{m}_0$ over $A_{\mathfrak{m}_0}$,
$$\cdots \rightarrow P_1 \longrightarrow A_{\mathfrak{m}_0} \stackrel{\cdot 1}{\longrightarrow} A_{\mathfrak{m}_0}/\mathfrak{m}_0 \to 0.$$
Each $s_n$ is in the zeroth syzygy module $\ker (\cdot 1) = \operatorname{ann}_{A_{\mathfrak{m}_0}}(A_{\mathfrak{m}_0}/\mathfrak{m}_0)$.
Thus $\ker(\cdot 1)$ is not finitely generated over $A_{\mathfrak{m}_0}$ since $h^n \not \in \bar{\tau}_{\psi}(e_iAe_i)$.
Furthermore, the cycles $s_n$ are pairwise commuting, and in particular there are an infinite number of independent commutation relations between them.
It follows that $\operatorname{pd}_{A_{\mathfrak{m}_0}}(A_{\mathfrak{m}_0}/\mathfrak{m}_0) = \infty$.
\end{proof}

\begin{Lemma} \label{simple dim}
Let $V$ be a simple $A_{\mathfrak{q}}$-module, and let $i \in Q_0$.
Then
\begin{equation*} \label{eiV}
\operatorname{dim}_k e_iV \leq 1.
\end{equation*}
\end{Lemma}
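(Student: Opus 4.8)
The plan is to show that any simple $A_{\mathfrak{q}}$-module $V$ is a cyclic module whose restriction to each vertex idempotent $e_i$ is at most one-dimensional, by exploiting the structure of $A_{\mathfrak{q}}$ as a tiled matrix algebra in which enough of the diagonal entries equal $S$ and enough off-diagonal entries are ``large''. Concretely, since $V$ is simple, $V = A_{\mathfrak{q}}v$ for any nonzero $v \in V$, and since $\mathbf{1} = \sum_{i \in Q_0} e_i$ acts as the identity, there is some vertex $j$ with $e_j V \neq 0$; pick $0 \neq v \in e_j V$, so $V = A_{\mathfrak{q}} v = \bigoplus_i e_i A_{\mathfrak{q}} v$ and each $e_i V = e_i A_{\mathfrak{q}} e_j v = e_i A_{\mathfrak{q}} v$.

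The key point is that $e_i A_{\mathfrak{q}} e_j v$ is spanned over the field $k$ by the images of paths $p \in e_i A e_j$, and I want to show all such images are scalar multiples of a single one. First I would use Lemma \ref{over}.4 to locate a vertex whose diagonal block is all of $S$ (and after cyclic localization at $\mathfrak{q} = \mathfrak{q}_D$, this block becomes the local ring $S_{\mathfrak{q}}$, whose maximal ideal is principal, generated by $\sigma$, by Proposition \ref{principal}). The strategy is: for any two paths $p, p' \in e_i A_{\mathfrak{q}} e_j$, compare their $\bar\tau_\psi$-images $\overbar{p}, \overbar{p}'$ in $\operatorname{Frac} B$; using that $A'$ is cancellative, two such images either divide one another up to a unit, or their ``difference'' lies in $\mathfrak{m}_0$-type ideals, and in $A_{\mathfrak{q}}$ one can pass between $p$ and $p'$ by multiplying by an element of the (local) diagonal block $S_{\mathfrak{q}}$ at an appropriate vertex of indegree $\geq 2$ (which exists by assumption (B), since the head or tail of each arrow has indegree $\geq 2$). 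Then $p$ and $p'$ act on $v$ by proportional elements, possibly after absorbing a unit or a power of $\sigma$; since $V$ is simple, $\sigma$ acts either invertibly or as zero on $V$, and in the latter case all paths divisible by $\sigma$ kill $v$. Either way, $e_i A_{\mathfrak{q}} v$ is at most one-dimensional over $k$.

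The main obstacle I expect is the bookkeeping around $\sigma$ and the non-cancellative arrows in $Q_1^{\operatorname{t}}$: a priori there could be two paths $p, p' \in e_iAe_j$ whose images $\overbar p, \overbar p'$ are genuinely distinct monomials in $B$ that nonetheless become associates only after inverting $S \setminus \mathfrak{q}$, and one must check that the element of $S_{\mathfrak{q}}$ relating them actually lies in the relevant diagonal block $e_k A_{\mathfrak{q}} e_k$ so that it can be applied inside $A_{\mathfrak{q}}$ rather than merely inside the cycle algebra. This is where one genuinely needs that the contraction $\psi$ produces a cancellative $A'$ together with assumptions (A) and (B): cancellativity of $A'$ forces any two parallel paths in $A'$ with comparable $\bar\tau$-images to be related by a diagonal cycle, and (B) guarantees a vertex of indegree $\geq 2$ on each arrow, whose diagonal block is $S$ and becomes local after cyclic localization. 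I would handle the residual case $\sigma \in \mathfrak{q}$ acting nontrivially by noting that multiplication by $\sigma$ is central (it is the common image of all unit cycles) and on the simple module $V$ acts as a scalar; if that scalar is nonzero it is a unit and the divisibility argument collapses all parallel path-images to one up to units, and if it is zero, every path whose image is divisible by $\sigma$ annihilates $v$, leaving only finitely many, in fact at most one, surviving path-class per vertex.
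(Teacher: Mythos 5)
The paper's own proof is a three-line argument of a quite different nature. Since $e_i$ is an idempotent, $e_iV$ is either zero or a simple module over the corner ring $e_iA_{\mathfrak{q}}e_i$; and this corner ring is a \emph{commutative} $k$-algebra, being isomorphic via $\bar{\tau}_{\psi}$ to a subring of $\operatorname{Frac}B$. Schur's lemma over the algebraically closed field $k$ then gives $\operatorname{dim}_k e_iV \leq 1$ at once. Commutativity of the corner ring is the entire content; no divisibility analysis and no case split on $\sigma$ is needed.

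Your proposal assembles the right surrounding structure (the diagonal blocks $S_{\mathfrak{q}}$, the central element $\sigma$, cancellativity of $A'$), but the step that would close the argument is missing, and as written the reasoning is circular. In the ``$\sigma$ acts invertibly'' case you assert that the divisibility argument ``collapses all parallel path-images to one up to units,'' but a unit $u\in S_{\mathfrak{q}}$ realized in a diagonal block $e_kA_{\mathfrak{q}}e_k$ merely acts on $e_kV$ by some $k$-linear endomorphism; that this endomorphism is a \emph{scalar} is precisely the Schur-type statement you need, and it is what you are trying to prove. In the ``$\sigma$ acts as zero'' case, the claim that ``at most one path-class survives per vertex'' is exactly the content of the lemma, asserted without justification. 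There is also a logical ordering problem: several facts you appeal to implicitly --- that paths $p$ with $x_D\nmid\overbar{p}$ are vertex invertible, that arrows with $x_D\mid\overbar{a}$ annihilate $V$ --- are proved only later in the paper (Lemma \ref{vertex invertible} and Lemma \ref{annihilate}), and the proof of Lemma \ref{annihilate} itself cites Lemma \ref{simple dim}. The clean route is to observe that every corner ring $e_iA_{\mathfrak{q}}e_i$ is commutative (it embeds in the field $\operatorname{Frac}B$) and then apply Schur's lemma to the simple $e_iA_{\mathfrak{q}}e_i$-module $e_iV$; this disposes of every vertex uniformly, including those of indegree $1$, with no analysis of parallel paths at all.
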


\begin{proof}
Suppose $V$ is a simple $A_{\mathfrak{q}}$-module.
Then $e_iV$ is a simple $e_iA_{\mathfrak{q}}e_i$-module.
Furthermore, the corner ring $e_iA_{\mathfrak{q}}e_i \cong \bar{\tau}_{\psi}(e_iA_{\mathfrak{q}}e_i) \subset B$ is a commutative $k$-algebra and $k$ is algebraically closed.
Therefore $\operatorname{dim}_k e_iV \leq 1$ by Schur's lemma.
\end{proof}

\begin{Lemma} \label{ann R}
Let $V$ be a simple $A_{\mathfrak{q}}$-module, and let $i \in Q_0$ be a vertex for which $e_iV \not = 0$.
Suppose $s \in e_iA_{\mathfrak{q}}e_i$.
Then $sV = 0$ if and only if $\overbar{s} \in \mathfrak{q}$. 
Consequently, $\operatorname{ann}_R V = \mathfrak{m}_0$.
\end{Lemma}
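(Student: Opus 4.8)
The plan is to reduce the equivalence to a statement about a single $k$-algebra character on the corner ring $e_iA_{\mathfrak{q}}e_i$ and then to identify that character's kernel with reduction modulo $\mathfrak{q}$. Since $e_iV\neq 0$, Lemma \ref{simple dim} gives $\dim_k e_iV = 1$; fix a spanning vector $v$. As $e_iA_{\mathfrak{q}}e_i\cong\bar\tau_{\psi}(e_iA_{\mathfrak{q}}e_i)$ is commutative and acts on the line $kv$, it acts through a character $\chi\colon e_iA_{\mathfrak{q}}e_i\to k$, $sv=\chi(s)v$. Because $V=A_{\mathfrak{q}}e_iV$ and $s=e_ise_i$ commutes with $e_iA_{\mathfrak{q}}e_i$, writing $w=\sum_j e_jw$ and using $se_j=0$ for $j\neq i$ yields $sw=s(e_iw)=\chi(s)e_iw$ for all $w\in V$; hence $sV=0$ iff $\chi(s)=0$. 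Moreover every element of $e_iA_{\mathfrak{q}}e_i$ is a $k$-combination of $\bar\tau_{\psi}$-images of cycles at $i$ (which lie in $S$) multiplied by elements of the localized diagonals $A^j_{\mathfrak{q}\cap A^j}\subseteq S_{\mathfrak{q}}$, so $\bar\tau_{\psi}(e_iA_{\mathfrak{q}}e_i)\subseteq S_{\mathfrak{q}}$; thus the first assertion becomes $\chi(s)=0\iff\overbar{s}\in\mathfrak{q}$ (read as $\overbar{s}\in\mathfrak{q}S_{\mathfrak{q}}$).

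Next I would pin down where a simple module can be supported. If $\operatorname{deg}^+ j\geq 2$, then $\bar\tau_{\psi}(e_jAe_j)=S$ by Lemma \ref{over}.4, which together with $\bar\tau_{\psi}(e_jA_{\mathfrak{q}}e_j)\subseteq S_{\mathfrak{q}}$ gives $e_jA_{\mathfrak{q}}e_j\cong S_{\mathfrak{q}}$; since $\operatorname{ht}_S\mathfrak{q}=1<3=\dim S$ by Theorem \ref{height} (with $\dim S=\dim Z(A')=3$), this local ring has residue field $\operatorname{Frac}(S/\mathfrak{q})$ of positive transcendence degree over $k$, so $\dim_k e_jV\leq 1$ forces $e_jV=0$. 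Hence $\operatorname{deg}^+ i=1$; and since assumption (B) forbids two indegree-$1$ vertices from being adjacent, every arrow of $Q$ annihilates $V$ (for an arrow whose tail has indegree $1$, the head has indegree $\geq 2$, where $V$ vanishes). Consequently every nontrivial cycle at $i$ factors through an arrow and so acts as $0$ on $V$, so the restriction of $\chi$ to $A^i=\bar\tau_{\psi}(e_iAe_i)=k+\mathfrak{a}_i$, where $\mathfrak{a}_i$ is the $k$-span of the $\bar\tau_{\psi}$-images of the nontrivial cycles at $i$, is the augmentation, with kernel $\mathfrak{a}_i$.

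Now I would compare this with the cyclic localization. The local ring $A^i_{\mathfrak{q}\cap A^i}$ lies in $e_iA_{\mathfrak{q}}e_i$ and acts on the line $kv$, so the composite $A^i\to A^i_{\mathfrak{q}\cap A^i}\to A^i_{\mathfrak{q}\cap A^i}/(\text{maximal ideal})$ is a $k$-valued character with kernel $\mathfrak{q}\cap A^i$, and it coincides with $\chi|_{A^i}$; hence $\mathfrak{q}\cap A^i=\mathfrak{a}_i$. In particular $\mathfrak{a}_i\subseteq\mathfrak{q}$, and this forces the unique out-arrow $\beta$ of $i$ into $D$ (where $\mathfrak{q}=\mathfrak{q}_D$): otherwise, as $D$ is simple there is a path $\operatorname{h}(\beta)\to\operatorname{t}(\delta)$ in $Q$ whose $\psi$-image avoids $D$, where $\delta$ is the unique arrow into $i$ (so $\delta\in Q_1^*$, $\overbar\delta=1$), giving a nontrivial cycle at $i$ through $\beta$ with $\bar\tau_{\psi}$-image not divisible by $x_D$, contradicting $\mathfrak{a}_i\subseteq\mathfrak{q}_D$. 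Therefore every nontrivial cycle at $i$ has $\bar\tau_{\psi}$-image divisible by $x_D$, hence in $\mathfrak{q}S_{\mathfrak{q}}$; expanding any $s\in e_iA_{\mathfrak{q}}e_i$ as a sum of products $c_0p_1c_1\cdots p_mc_m$ (paths $p_l$, localized-diagonal scalars $c_l$) and using this, together with Proposition \ref{principal} to handle the fractions, one checks $\overbar{s}\equiv\chi(s)\pmod{\mathfrak{q}S_{\mathfrak{q}}}$ with $\chi(s)\in k$, which is the first assertion. For the consequence, any $r\in R$ has $re_i\in e_iAe_i$ with $\overbar{re_i}=r$, so $r$ acts on $V$ by $\chi(re_i)$, whence $\operatorname{ann}_R V=\{r\in R: r\in\mathfrak{q}\}=\mathfrak{q}\cap R$; since $\mathfrak{q}=\mathfrak{q}_D$ is minimal over $\mathfrak{m}_0$ (Theorem \ref{height}, Proposition \ref{prime2}) and $\mathfrak{m}_0$ is maximal in $R$, the proper prime $\mathfrak{q}\cap R$ containing $\mathfrak{m}_0$ must equal $\mathfrak{m}_0$.

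I expect the main obstacle to be the step matching $\ker\chi$ with reduction modulo $\mathfrak{q}$ --- equivalently, the assertion that at an indegree-$1$ vertex the corner ring $\bar\tau_{\psi}(e_iA_{\mathfrak{q}}e_i)$ of the cyclic localization is local with residue field $k$ and maximal ideal exactly $\{\,s:\overbar{s}\in\mathfrak{q}S_{\mathfrak{q}}\,\}$. Pulling this off means combining the indegree dichotomy, the fact that the out-arrow of the supporting vertex lies in $D$, and Proposition \ref{principal} (that $\mathfrak{q}S_{\mathfrak{q}}=\sigma S_{\mathfrak{q}}$ becomes principal); the analogue is false at vertices of indegree $\geq 2$, which is exactly why simple $A_{\mathfrak{q}}$-modules cannot be supported there.
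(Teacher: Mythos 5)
Your argument and the paper's are genuinely different, and yours contains a gap that also makes it much longer than it needs to be. The pivotal step in your proof is the claim that if $\operatorname{deg}^+ j\geq 2$ then $e_jV=0$, hence $\operatorname{deg}^+ i=1$. You deduce this from $e_jA_{\mathfrak{q}}e_j\cong S_{\mathfrak{q}}$ together with the fact that $S_{\mathfrak{q}}$, being local with residue field $\operatorname{Frac}(S/\mathfrak{q})$ of positive transcendence degree, admits no $k$-algebra character to $k$. But this conclusion is incompatible with Theorem \ref{simples} later in the paper, which exhibits a simple $A_{\mathfrak{q}}$-module $V_0\cong(S_{\mathfrak{q}}/\mathfrak{q})\epsilon_D$ supported at every vertex summand of $\epsilon_D$ --- in particular at all vertices of indegree $\geq 2$. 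For such $V_0$ the component $e_jV_0$ is one-dimensional over the residue field $\operatorname{Frac}(S/\mathfrak{q})$, not over $k$, so the hypothesis $\dim_k e_jV=1$ that your deduction relies on is not available there. Your step proves too much, and the entire downstream construction (the out-arrow $\beta$ landing in $D$, the expansion $c_0p_1c_1\cdots p_mc_m$, the identification $\mathfrak{q}\cap A^i=\mathfrak{a}_i$) rests on the now-unsupported assumption $\operatorname{deg}^+ i=1$.

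By contrast, the paper's proof is one paragraph and applies uniformly to any vertex $i$ with $e_iV\neq 0$, making no structural claim about its indegree. Given $\dim_k e_iV\leq 1$, pick $c\in k$ with $(s-ce_i)e_iV=0$; if $\overbar{s}\in\mathfrak{q}$ but $c\neq 0$ then $\overbar{s}-c\in S\setminus\mathfrak{q}$, so $\frac{s-ce_i}{\overbar{s}-c}=e_i$ acts as the identity on $e_iV$ and forces $v=0$, a contradiction, giving $c=0$ and $sV=se_iV=0$. Conversely if $sV=0$ but $\overbar{s}\notin\mathfrak{q}$ then $\frac{s}{\overbar{s}}=e_i$ kills $e_iV$, contradicting the choice of $i$. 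Nothing about the supporting vertex (beyond nonvanishing of $e_iV$) is needed, and the machinery of characters, augmentation ideals, and the locating of $\beta$ in $D$ is unnecessary. The final sentence of the lemma is handled in both approaches by observing that $\mathfrak{q}\cap R=\mathfrak{m}_0$, since $\mathfrak{q}\supseteq\mathfrak{m}_0$ and $\mathfrak{m}_0$ is a maximal ideal of $R$.
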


\begin{proof}
(i) Suppose $s \in e_iAe_i$ satisfies $\overbar{s} \in \mathfrak{q}$.
We claim that $sV = 0$.

Indeed, let $v \in e_iV$ be nonzero.
Then $\operatorname{dim}_k e_iV = 1$ by Lemma \ref{simple dim}.
Thus there is some $c \in k$ such that $(s-ce_i)e_iV = 0$.
Assume to the contrary that $c$ is nonzero.
Then $\overbar{s} - c \in S \setminus \mathfrak{q}$.
Therefore
$$v = \frac{s-ce_i}{\overbar{s}-c} \, v = \frac{1}{\overbar{s}-c} \, (s-ce_i)v = 0,$$
contrary to our choice of $v$. 

(ii) Conversely, suppose $s \in e_iAe_i$ satisfies $sV = 0$.
Assume to the contrary that $\overbar{s} \not \in \mathfrak{q}$; then $\overbar{s}^{-1} \in S_{\mathfrak{q}}$.
Whence
$$e_iV = \frac{s}{\overbar{s}} \, e_iV = \frac{1}{\overbar{s}} \, sV = 0,$$
contrary to our choice of vertex $i$.
\end{proof}

\begin{Definition} \rm{
Let $A$ be a ring with a complete set of orthogonal idempotents $\{e_1, \ldots, e_d \}$.
We say an element $p \in e_jAe_i$ is \textit{vertex invertible} if there is an element $p^* \in e_iAe_j$ such that 
$$p^*p = e_i \ \ \ \text{ and } \ \ \ pp^* = e_j.$$
Denote by $(e_jAe_i)^{\circ}$ the set of vertex invertible elements in $e_jAe_i$.
}\end{Definition}

For an arrow $a \in Q_1^{\operatorname{t}}$, denote by $\delta_a$ the unique arrow with $\operatorname{h}(\delta_a) = \operatorname{t}(a)$; in particular, $\delta_a \in Q_1^*$.

\begin{Lemma} \label{vertex invertible}
A path $p \in A$ is vertex invertible in $A_{\mathfrak{q}}$ if and only if $x_D \nmid \overbar{p}$ and the leftmost arrow subpath of $p$ is not an arrow $\delta_a \in Q_1^*$ for which $x_D \mid \overbar{a}$.
\end{Lemma}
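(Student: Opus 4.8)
The plan is to characterize vertex invertibility in $A_{\mathfrak{q}}$ by producing an explicit two-sided inverse when the stated conditions hold, and deriving a contradiction when they fail. Throughout, write $D$ for the fixed simple matching with $\mathfrak{q} = \mathfrak{q}_D$. The key structural fact to exploit is that $e_jA_{\mathfrak{q}}e_i \cong \bar\tau_\psi(e_jA_{\mathfrak{q}}e_i)$ sits inside $\operatorname{Frac}B$, so a path $p$ is vertex invertible precisely when some $p^* \in e_iA_{\mathfrak{q}}e_j$ satisfies $\overbar{p}\,\overbar{p^*} = 1$ with $p^*p$ and $pp^*$ equal to the respective vertex idempotents; since the corner rings are commutative domains, the condition $\overbar{p^*} = \overbar{p}^{-1}$ is the heart of the matter, and the extra bookkeeping is just that $p^*p$, $pp^*$ are genuine idempotents rather than merely cycles with trivial $\bar\tau_\psi$-image.

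For the "if" direction, suppose $x_D \nmid \overbar{p}$ and the leftmost arrow subpath of $p$ is not a $\delta_a$ with $x_D \mid \overbar{a}$. The idea is to build $p^*$ as a path in the opposite direction. Using Lemma~\ref{green} (applied at $\operatorname{t}(p)$, noting that the hypothesis on the leftmost arrow is exactly the clause in that lemma ruling out the bad $\delta_a$ case when $\operatorname{deg}^+\operatorname{t}(p) = 1$) one gets a path $q \in e_{\operatorname{t}(p)}Ae_{\operatorname{h}(p)}$ with $x_D \nmid \overbar{q}$; then $qp$ is a cycle at $\operatorname{t}(p)$ with $x_D \nmid \overbar{qp}$, so $\overbar{qp} \in S \setminus \mathfrak{q}$ and hence $\overbar{qp}^{-1} \in S_{\mathfrak{q}} = \bar\tau_\psi(e_{\operatorname{t}(p)}A_{\mathfrak{q}}e_{\operatorname{t}(p)})$ (using $\operatorname{deg}^+\operatorname{t}(p) \ge 2$, or passing through the head via assumption (B) as in the proof of Proposition~\ref{prime2}). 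Set $p^* := \overbar{qp}^{-1}\, q \in e_{\operatorname{t}(p)}A_{\mathfrak{q}}e_{\operatorname{h}(p)}$. Then $p^*p = \overbar{qp}^{-1}qp = e_{\operatorname{t}(p)}$ by construction; for $pp^* = e_{\operatorname{h}(p)}$ one checks $\overbar{pp^*} = \overbar{p}\,\overbar{qp}^{-1}\overbar{q} = 1$ and that $pp^*$ lies in the corner ring at $\operatorname{h}(p)$, which forces $pp^* = e_{\operatorname{h}(p)}$ since that corner ring is a domain in which $1$ has a unique preimage. (Some care is needed that $\overbar{qp}^{-1}$, a priori only in $S_{\mathfrak{q}}$, actually lands in $\bar\tau_\psi(e_{\operatorname{t}(p)}A_{\mathfrak{q}}e_{\operatorname{t}(p)})$; this is exactly the content of the cyclic localization at $\mathfrak{q}$ together with Lemma~\ref{over}.4.)

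For the "only if" direction, suppose $p$ is vertex invertible with inverse $p^*$, so $\overbar{p}$ is a unit in $\operatorname{Frac}B$ times the relevant matrix entries and $\overbar{p}\,\overbar{p^*} = 1$. If $x_D \mid \overbar{p}$, then since $p^*p$ is a cycle with $\bar\tau_\psi$-image $\overbar{p^*}\,\overbar{p}$, which is divisible by $x_D$, we would have $\overbar{p^*p} \in \mathfrak{q}$, contradicting $p^*p = e_{\operatorname{t}(p)}$ whose image is $1 \notin \mathfrak{q}$. If instead the leftmost arrow subpath of $p$ is $\delta_a \in Q_1^*$ with $x_D \mid \overbar{a}$, I would argue that any completion $p^*p$ to a cycle at $\operatorname{t}(p)$ must have $\bar\tau_\psi$-image divisible by $x_D$: writing $p = p'\delta_a$ and using $\operatorname{deg}^+\operatorname{t}(\delta_a) = 1$ together with assumption (B), any cycle through $\operatorname{t}(p) = \operatorname{h}(\delta_a) = \operatorname{t}(a)$ that uses $\delta_a$ as its incoming arrow at that vertex is forced (via the unit-cycle relations, cf.\ Lemma~\ref{one more} and Proposition~\ref{prime2}) to factor through $a$, hence is divisible by $\overbar{a}$ and so lies in $\mathfrak{q}_D$ — again contradicting $p^*p = e_{\operatorname{t}(p)}$.

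The main obstacle I anticipate is the last paragraph: proving that when $p$ begins (on the left) with the bad arrow $\delta_a$, \emph{every} way of closing up $p$ on the right to a vertex-idempotent forces divisibility by $x_D$. This requires a careful local analysis at the indegree-$1$ vertex $\operatorname{t}(a)$ — essentially that the only arrow into $\operatorname{t}(a)$ is $\delta_a$, so any cycle based there enters via $\delta_a$, and then the cancellative structure of $A'$ (through $\psi$ and Lemma~\ref{one more}) pins down the factorization. The "if" direction, by contrast, should be a fairly direct application of Lemma~\ref{green} once the corner-ring/domain bookkeeping is set up correctly.
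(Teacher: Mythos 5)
Your overall strategy — construct an explicit inverse via Lemma~\ref{green} for the ``if'' direction, and derive contradictions from the two failure modes for the ``only if'' direction — matches the paper's. But there are real gaps in the ``only if'' direction and some head/tail confusion throughout.

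\textbf{The main gap.} In both contradiction arguments you treat $p^*$ as though it were a path in $A$ with a single monomial image $\overbar{p^*} \in B$, concluding things like ``$\overbar{p^*}\,\overbar{p}$ is divisible by $x_D$.'' But $p^* \in A_{\mathfrak{q}}$ is a sum of paths with denominators, and $\overbar{p^*} \in \operatorname{Frac}B$ may a priori have $x_D$ in its denominator — indeed $\overbar{p^*p} = 1$, so ``$\overbar{p^*p} \in \mathfrak{q}$'' is not the contradiction, it is the thing you must show is impossible. The paper handles this by explicitly writing $p^* = \sum_{j=1}^m s_j^{-1}p_j$ with $s_j \in S\setminus\mathfrak{q}$ (so $x_D\nmid s_j$) and $p_j \in e_{\operatorname{t}(p)}Ae_{\operatorname{h}(p)}$ genuine paths. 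For the case $x_D \mid \overbar{p}$, clearing denominators in $1 = \overbar{p}\sum_j s_j^{-1}\overbar{p}_j$ gives a polynomial identity $s_1\cdots s_m = \overbar{p}\sum_j (s_1\cdots\hat{s}_j\cdots s_m)\overbar{p}_j$ in $B$, forcing $x_D \mid s_j$ for some $j$ — contradiction. For the case where the leftmost arrow is a bad $\delta_a$, one uses $\operatorname{deg}^+\operatorname{h}(p)=1$ (hence outdegree $1$ at $\operatorname{h}(p)$) to see that every nontrivial path with tail $\operatorname{h}(p)$, in particular each $p_j$, has $x_D \mid \overbar{p}_j$; then $x_D \mid \overbar{p^*}$ in $BS_{\mathfrak{q}}$, and $\overbar{p^*p}=1$ forces $x_D$ to be a unit in $BS_{\mathfrak{q}}$, which is false. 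Note this is much simpler than your invocation of Lemma~\ref{one more}: you do not need the cancellative structure of $A'$ or unit-cycle factorizations at all, just the fact that $a$ is the unique arrow out of $\operatorname{h}(p)$.

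\textbf{Head/tail bookkeeping.} With the paper's right-to-left composition, the leftmost arrow of $p$ is the last applied, so its head is $\operatorname{h}(p)$ (not $\operatorname{t}(p)$). So Lemma~\ref{green} must be applied at the vertex $\operatorname{h}(p)$ (the tail of the sought inverse path $q$), and the ``bad $\delta_a$'' clause in that lemma concerns the case $\operatorname{deg}^+\operatorname{h}(p)=1$. Also $p = \delta_a p''$, not $p = p'\delta_a$ — you have this backwards, which causes the confused identification ``$\operatorname{t}(p)=\operatorname{h}(\delta_a)$'' in your final paragraph; that equality only holds when $p$ is a cycle, in which case $a$ is the rightmost arrow and $x_D \mid \overbar{p}$, reducing to the other case. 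Your ``if'' direction is otherwise essentially the paper's argument; the point you flag about $\overbar{qp}^{-1}$ landing inside $A_{\mathfrak{q}}$ is handled in the paper by inserting the scalar at a vertex $i$ along $q$ with $e_iAe_i = Se_i$ (available by Lemma~\ref{over}.4), rather than at $\operatorname{t}(p)$.
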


\begin{proof}
(i) First suppose $x_D \mid \overbar{p}$.
Assume to the contrary that $p$ has vertex inverse $p^*$. 
Then
\begin{equation} \label{the-end}
p^* = \sum_{j=1}^m s_j^{-1} p_j
\end{equation}
for some $s_j \in S \setminus \mathfrak{q}$ and $p_j \in e_{\operatorname{t}(p)}Ae_{\operatorname{h}(p)}$.
In particular,
$$1 = \overbar{pp^*} = \overbar{p} \sum_j s_j^{-1}\overbar{p}_j.$$
Whence
$$s_1 \cdots s_m = \overbar{p} \sum_j \left( s_1 \cdots \hat{s}_j \cdots s_m \right) \overbar{p}_j \in B.$$
Thus $x_D \mid s_1 \cdots s_m$ since $x_D \mid \overbar{p}$.
Therefore $x_D \mid s_j$ for some $j$.
But then $s_j \in \mathfrak{q}$, a contradiction to our choice of $s_j$.

(ii) Now suppose the leftmost arrow subpath of $p$ is an arrow $\delta_a \in Q_1^*$ for which $x_D \mid \overbar{a}$.
If $p$ is a cycle, then $a$ is the rightmost arrow subpath of $p$.
Whence $x_D \mid \overbar{p}$.
Thus $p$ is not vertex invertible by Claim (i).

So suppose $p$ is not a cycle, and assume to the contrary that $p$ has vertex inverse $p^*$ given by (\ref{the-end}).
Since $p$ is not a cycle, we have $\operatorname{h}(p) \not = \operatorname{t}(p)$.
Thus each $p_j \in e_{\operatorname{t}(p)}Ae_{\operatorname{h}(p)}$ is a $k$-linear combination of nontrivial paths with tails at $\operatorname{h}(p)$.
But since $\operatorname{deg}^+ \operatorname{h}(p) = 1$, each nontrivial path $q \in A$ with tail at $\operatorname{h}(p)$ satisfies $x_D \mid \overbar{q}$.
Therefore $x_D$ divides each $\overbar{p}_j$ (in $B$). 
Furthermore, $x_D$ does not divide any $s_j$ since $s_j \in S \setminus \mathfrak{q}$. 
Whence $x_D \mid \overbar{p^*}$ in $BS_{\mathfrak{q}}$.
Thus $x_D \mid \overbar{p^*p}$ in $BS_{\mathfrak{q}}$, since $\overbar{p} \in B$.
Therefore $x_D \mid 1$ in $BS_{\mathfrak{q}}$.
But then $x_D$ is invertible in $BS_{\mathfrak{q}}$, a contradiction.

(iii) Finally suppose $x_D \nmid \overbar{p}$, and the leftmost arrow subpath of $\overbar{p}$ is not an arrow $\delta_a \in Q_1^*$ for which $x_D \mid \overbar{a}$.
Then there is a path $q \in e_{\operatorname{t}(p)}Ae_{\operatorname{h}(p)}$ satisfying $x_D \nmid \overbar{q}$, by Lemma \ref{green}.
Whence $pq$ is a cycle satisfying $x_D \nmid \overbar{pq}$; that is, $\overbar{pq} \in S \setminus \mathfrak{q}$.
Furthermore, $q$ has a vertex subpath $i$ for which $e_iAe_i = Se_i$, by Lemma \ref{over}.4.
Thus 
$$p^* := q (\overbar{pq})^{-1}$$ 
is in $A_{\mathfrak{q}}$.
But then
$$p^* p = \frac{q}{\overbar{pq}} \, p = \frac{\overbar{qp}}{\overbar{pq}} \, e_{\operatorname{t}(p)} = e_{\operatorname{t}(p)} \ \ \ \ \text{ and } \ \ \ \ pp^* = p \, \frac{q}{\overbar{pq}} = e_{\operatorname{h}(p)} \, \frac{\overbar{pq}}{\overbar{pq}} = e_{\operatorname{h}(p)}.$$
Therefore $p$ is vertex invertible in $A_{\mathfrak{q}}$.
\end{proof}

\begin{Lemma} \label{annihilate}
Let $V$ be a simple $A_{\mathfrak{q}}$-module.
\begin{enumerate}
 \item If $a \in Q_1 \setminus Q_1^*$ satisfies $x_D \mid \overbar{a}$, then $aV = 0$.
 \item If $\delta_a \in Q_1^*$ satisfies $x_D \mid \overbar{a}$, then $\delta_a V = 0$.
\end{enumerate}
\end{Lemma}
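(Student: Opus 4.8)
The plan is to prove (1) first, for every $a \in Q_1 \setminus Q_1^*$ with $x_D \mid \overbar{a}$, and then deduce (2) from it. Write $i := \operatorname{t}(a)$ and $j := \operatorname{h}(a)$. Since $a = ae_i$ we have $aV = a(e_iV)$, so it suffices to show $a(e_iV) = 0$; we may assume $e_iV \not= 0$, so that $\operatorname{dim}_ke_iV = 1$ by Lemma \ref{simple dim}. By assumption (B), either $\operatorname{deg}^+i \geq 2$, or $\operatorname{deg}^+i = 1$ (equivalently $a \in Q_1^{\operatorname{t}}$); I treat these separately.

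Suppose first $\operatorname{deg}^+i \geq 2$. Since $a \notin Q_1^*$ we have $\operatorname{deg}^+j \geq 2$, so by Lemma \ref{green} there is a path $c \in e_iAe_j$ with $x_D \nmid \overbar{c}$. As $\operatorname{deg}^+i \geq 2$, the leftmost arrow subpath of $c$ (which has head $i$) is not an arrow $\delta_{a'} \in Q_1^*$, so $c$ is vertex invertible in $A_{\mathfrak{q}}$ by Lemma \ref{vertex invertible}. Its vertex inverse $c^*$ then gives mutually inverse $k$-linear maps $c \colon e_jV \to e_iV$ and $c^* \colon e_iV \to e_jV$; in particular $e_jV \not= 0$ and $e_iV = c(e_jV)$. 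Hence
$$a(e_iV) = a\,c(e_jV) = (ac)(e_jV) = 0,$$
where the last equality holds by Lemma \ref{ann R}, since $ac$ is a cycle at $j$, $e_jV \not= 0$, and $\overbar{ac} = \overbar{a}\,\overbar{c} \in \mathfrak{q}$ (as $x_D \mid \overbar{a}$).

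Now suppose $\operatorname{deg}^+i = 1$; by (B) then $\operatorname{deg}^+j \geq 2$. This is the delicate case: the unique arrow into $i$ is $\delta_a$, which by Lemma \ref{vertex invertible} is never vertex invertible once $x_D \mid \overbar{a}$, so no analogue of the path $c$ above exists. Instead I would use the rigidity around $i$. Every nontrivial cycle at $i$ has the form $\delta_a(\cdots)a$, so its $\bar{\tau}_{\psi}$-image is a monomial divisible by $\overbar{a}$ and hence lies in $\mathfrak{q} = \mathfrak{q}_D$; consequently no nontrivial cycle at $i$ is inverted in passing to $A_{\mathfrak{q}}$, and every nontrivial path with tail $i$ factors through $a$, so $A_{\mathfrak{q}}e_i \subseteq A_{\mathfrak{q}}a + ke_i$. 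Therefore, picking $0 \not= v$ spanning $e_iV$, we get $V = A_{\mathfrak{q}}v = A_{\mathfrak{q}}(av) + kv$. Assume $av \not= 0$ for contradiction. Then $A_{\mathfrak{q}}(av)$ is a nonzero submodule of the simple module $V$, so it equals $V$, and in particular $v = z(av)$ for some $z \in e_iA_{\mathfrak{q}}e_j$. Pulling the (commutative) scalar denominators of $z$ out past the arrows, $z$ acts on $av$ as a $k$-linear combination of elements $\lambda\,p$ with $\lambda \in k$ and $p \colon j \to i$ an honest path in $A$; for each such $p$ the cycle $pa$ at $i$ has image the monomial $\overbar{p}\,\overbar{a} \in \mathfrak{q}$, so $(pa)v = p(av) = 0$ by Lemma \ref{ann R}. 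Hence $z(av) = 0$, so $v = 0$, a contradiction. Thus $av = 0$ and $a(e_iV) = k(av) = 0$, completing (1).

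Finally, for (2), let $\delta_a \in Q_1^*$ with $x_D \mid \overbar{a}$; then $a \in Q_1^{\operatorname{t}}$ and $\delta_a \colon \operatorname{t}(\delta_a) \to i$ with $\operatorname{t}(\delta_a) \not= i$ (otherwise $\delta_a$ would be a loop with head and tail both of indegree $1$, violating (B)). By (1), $aV = 0$. If $e_iV = 0$, then $\delta_aV = \delta_a(e_{\operatorname{t}(\delta_a)}V) \subseteq e_iV = 0$. If $e_iV \not= 0$, pick $0 \not= v$ spanning it; arguing as in the second case of (1), $V = A_{\mathfrak{q}}v = A_{\mathfrak{q}}(av) + kv = kv$ since $av \in aV = 0$, so $V$ is supported only at $i$, whence $e_{\operatorname{t}(\delta_a)}V = 0$ and again $\delta_aV = 0$. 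In all cases $\delta_aV = 0$. The main obstacle is the second case of (1): one cannot cancel $a$ by a vertex-invertible path, and must instead exploit that $\operatorname{t}(a)$ has indegree and outdegree $1$ — so that $V$ is generated over $A_{\mathfrak{q}}$ by $av$ together with the one-dimensional $e_{\operatorname{t}(a)}V$ — together with the fact that every return path to $\operatorname{t}(a)$ composes with $a$ into a cycle that annihilates $V$.
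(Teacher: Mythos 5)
Your proof is correct and follows the same overall strategy as the paper: split into cases by the indegree of $\operatorname{t}(a)$, then deduce part (2) from part (1). The two places where your execution differs are worth noting. In the case $\deg^+\operatorname{t}(a) \geq 2$, you cancel $a$ by a single vertex-invertible path $c \in e_{\operatorname{t}(a)}Ae_{\operatorname{h}(a)}$ produced by Lemma \ref{green} plus Lemma \ref{vertex invertible}, so that $a(e_{\operatorname{t}(a)}V) = (ac)(e_{\operatorname{h}(a)}V) = 0$ via Lemma \ref{ann R}; the paper instead uses the two-path trick $a = \frac{s}{\overbar{st}}\,ta$ with $\overbar{st} \notin \mathfrak{q}$ and applies Lemma \ref{ann R} to the cycle $ta$. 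Both are equivalent in substance. In the delicate case $a \in Q_1^{\operatorname{t}}$, you bypass the paper's intermediate step (ii.a), which first shows $a\delta_a V = 0$, and instead derive the contradiction $v = z(av) = 0$ directly: you expand $z \in e_{\operatorname{t}(a)}A_{\mathfrak{q}}e_{\operatorname{h}(a)}$ as $\sum_m s_m^{-1}p_m$ and observe each $p_m a$ is a cycle at $\operatorname{t}(a)$ with $x_D \mid \overline{p_m a}$, hence killed by Lemma \ref{ann R}. This is a legitimate streamlining, but be aware that it leans on the same implicit facts the paper uses inside Lemma \ref{vertex invertible}: that every element of $e_jA_{\mathfrak{q}}e_i$ admits such a decomposition, that each summand $s_m^{-1}p_m$ is itself in $A_{\mathfrak{q}}$ (possible here because $p_m$ factors through $\operatorname{t}(\delta_a)$, whose corner ring is $S$), and that a localized scalar acting on the one-dimensional $e_{\ell}V$ commutes out as a $k$-scalar (Lemma \ref{simple dim}). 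Your argument for part (2) — reducing to $V = kv$ supported only at $\operatorname{t}(a)$, forcing $e_{\operatorname{t}(\delta_a)}V = 0$ — is cleaner than the paper's terse (ii.c) and uses the same non-loop observation from assumption (B). Net: same key lemmas and case structure, with a slightly more economical treatment of the $Q_1^{\operatorname{t}}$ case.
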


\begin{proof}
Let $a \in Q_1$ be an arrow for which $x_D \mid \overbar{a}$.

(i) First suppose $a \in Q_1 \setminus (Q_1^* \cup Q_1^{\operatorname{t}})$.
We claim that $aV = 0$.
Since $a \in Q_1 \setminus (Q_1^* \cup Q_1^{\operatorname{t}})$, there are paths 
$$s \in e_{\operatorname{h}(a)}Ae_{\operatorname{t}(a)} \ \ \ \text{ and } \ \ \ t \in e_{\operatorname{t}(a)}Ae_{\operatorname{h}(a)}$$ 
such that $x_D \nmid \overbar{s}$ and $x_D \nmid \overbar{t}$, by Lemma \ref{green}.
In particular, $x_D \nmid \overbar{st}$.
Whence
$$\overbar{st} \in S \setminus \mathfrak{q}.$$
Thus
$$a = \frac{st}{\overbar{st}} \, a = \frac{s}{\overbar{st}} \, ta \in A_{\mathfrak{q}} \mathfrak{q} e_{\operatorname{t}(a)}.$$
But $ta \in \mathfrak{q}e_{\operatorname{t}(a)} \cap e_{\operatorname{t}(a)}Ae_{\operatorname{t}(a)}$.
Therefore $a$ annihilates $V$ by Lemma \ref{ann R}.

(ii) Now suppose $a \in Q_1^{\operatorname{t}}$.
Set $\delta := \delta_a \in Q_1^*$.

(ii.a) We first claim that $a \delta V = 0$.
By assumption (B), $\operatorname{deg}^+ \operatorname{t}(\delta) \geq 2$ and $\operatorname{deg}^+ \operatorname{h}(a) \geq 2$.
Thus there are paths
$$s \in e_{\operatorname{h}(a)}Ae_{\operatorname{t}(\delta)} \ \ \ \text{ and } \ \ \ t \in e_{\operatorname{t}(\delta)}Ae_{\operatorname{h}(a)}$$ 
such that $x_D \nmid \overbar{s}$ and $x_D \nmid \overbar{t}$, by Lemma \ref{green}.
Whence
$$\overbar{st} \in S \setminus \mathfrak{q}.$$
Thus
$$a \delta = \frac{st}{\overbar{st}} \, a\delta = \frac{s}{\overbar{st}} \, ta\delta \in A_{\mathfrak{q}} \mathfrak{q} e_{\operatorname{t}(\delta)}.$$
Therefore $a \delta$ annihilates $V$ by Lemma \ref{ann R}.

(ii.b) We claim that $aV = 0$.
If $e_{\operatorname{t}(a)}V = 0$, then $aV = 0$, so suppose there is some nonzero $v \in e_{\operatorname{t}(a)}V$.
Assume to the contrary that $av \not = 0$.
Then, since $V$ is simple and $\operatorname{deg}^+ \operatorname{t}(a) = 1$, there is some $p \in A_{\mathfrak{q}}$ such that
$$w := \delta p a v \in e_{\operatorname{t}(a)}V$$
is nonzero.
By Claim (ii.a), $aw = (a \delta) (pav) = 0$.
Furthermore, $\operatorname{dim}_k e_{\operatorname{t}(a)}V = 1$ by Lemma \ref{simple dim}.
Thus, since $v,w \in e_{\operatorname{t}(a)}V$ are both nonzero, there is some $c \in k^*$ such that $cw = v$. 
But then 
$$0 \not = av = acw = c(aw) = 0,$$
which is not possible.

(ii.c) Finally, we claim that $\delta V = 0$.
Assume to the contrary that there is some $v \in e_{\operatorname{t}(\delta)}V$ such that $\delta v \not = 0$.
By Claim (2.i), $a \delta v = 0$.
But again $a$ is the only arrow with tail at $\operatorname{t}(a)$, and $\delta$ is not vertex invertible by Lemma \ref{vertex invertible}.
Therefore $V$ is not simple, a contradiction. 
\end{proof}

For each $\mathfrak{q}_D \in \operatorname{Spec}S$ minimal over $\mathfrak{m}_0$, set
$$\epsilon_D := 1_A - \sum_{a \in Q_1^{\operatorname{t}} \, : \, x_D \mid \overbar{a}} e_{\operatorname{t}(a)}.$$

\begin{Theorem} \label{simples}
Let $\mathfrak{q} = \mathfrak{q}_D \in \operatorname{Spec}S$ be minimal over $\mathfrak{m}_0 \in \operatorname{Max}R$.
Suppose there are $n$ arrows $a_1, \ldots, a_n \in Q_1^{\operatorname{t}}$ such that $x_D \mid \overbar{a}_{\ell}$.
Then there are precisely $n + 1$ non-isomorphic simple $A_{\mathfrak{q}}$-modules:
\begin{equation} \label{V0}
V_0 := A_{\mathfrak{q}} \epsilon_D/ A_{\mathfrak{q}} \mathfrak{q} \epsilon_D \cong \left( S_{\mathfrak{q}}/\mathfrak{q} \right)\epsilon_D,
\end{equation}
and for each $1 \leq \ell \leq n$, a vertex simple
\begin{equation} \label{Vell}
V_{\ell} := ke_{\operatorname{t}(a_{\ell})} \cong \left( R_{\mathfrak{m}_0}/\mathfrak{m}_0 \right) e_{\operatorname{t}(a_{\ell})}.
\end{equation}
\end{Theorem}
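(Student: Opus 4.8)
The plan is to show that the $n+1$ modules listed are pairwise non-isomorphic simple $A_{\mathfrak q}$-modules, and then that there are no others; the second half rests on a dichotomy coming from Lemma \ref{annihilate}. For the "small" simples: since each $a_\ell\in Q_1^{\operatorname{t}}$, the vertex $\operatorname{t}(a_\ell)$ has indegree (hence outdegree) $1$, with unique incoming arrow $\delta_{a_\ell}\in Q_1^*$ and unique outgoing arrow $a_\ell$; by assumption (B), $\operatorname{deg}^+\operatorname{h}(a_\ell)\geq 2$, so $a_\ell\notin Q_1^*$ and Proposition \ref{prime2} applies to $a_\ell$, giving $\bar\tau_\psi\big(e_{\operatorname{t}(a_\ell)}Aa_\ell\big)=\bigcap_{D'\,:\,x_{D'}\mid\overbar{a}_{\ell}}\mathfrak q_{D'}\subseteq\mathfrak q_D$. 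Hence every nontrivial cycle at $\operatorname{t}(a_\ell)$ has $\bar\tau_\psi$-image in $\mathfrak q$, so $e_{\operatorname{t}(a_\ell)}A_{\mathfrak q}e_{\operatorname{t}(a_\ell)}$ is local with residue field $k$, and $V_\ell:=ke_{\operatorname{t}(a_\ell)}$ (with $e_{\operatorname{t}(a_\ell)}$ acting as $1$, everything else as $0$) is a well-defined simple $A_{\mathfrak q}$-module; since $R/\mathfrak m_0=k$ this gives $V_\ell\cong(R_{\mathfrak m_0}/\mathfrak m_0)e_{\operatorname{t}(a_\ell)}$, and the $\operatorname{t}(a_\ell)$ being distinct (a vertex of outdegree $1$ is the tail of a single arrow), the $V_\ell$ are pairwise non-isomorphic. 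For $V_0$: for any vertex $i$ of indegree $\geq 2$, $e_iAe_i\cong S$ by Lemma \ref{over}.4, so $e_iA_{\mathfrak q}e_i\cong S_{\mathfrak q}$, which is local with maximal ideal $\sigma S_{\mathfrak q}$ by Proposition \ref{principal}; using this one identifies $A_{\mathfrak q}\epsilon_D/A_{\mathfrak q}\mathfrak q\epsilon_D$ with $(S_{\mathfrak q}/\mathfrak q)\epsilon_D$ by a direct computation, and its simplicity also falls out of the last step.

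\textbf{The dichotomy.} Let $V$ be a simple $A_{\mathfrak q}$-module. By Lemma \ref{annihilate}, for every $\ell$ the arrows $a_\ell$ and $\delta_{a_\ell}$ annihilate $V$. Suppose first $e_{\operatorname{t}(a_\ell)}V\neq0$ for some $\ell$. Every path from $\operatorname{t}(a_\ell)$ to a vertex $j\neq\operatorname{t}(a_\ell)$ begins with $a_\ell$, and inverting cycles does not affect this, so $(1-e_{\operatorname{t}(a_\ell)})A_{\mathfrak q}e_{\operatorname{t}(a_\ell)}\subseteq A_{\mathfrak q}a_\ell$, which annihilates $V$; hence $A_{\mathfrak q}\big(e_{\operatorname{t}(a_\ell)}V\big)\subseteq e_{\operatorname{t}(a_\ell)}V$, so by simplicity $V=e_{\operatorname{t}(a_\ell)}V$, which is one-dimensional over $k$ by Lemma \ref{simple dim}, whence $V\cong V_\ell$. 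Otherwise $e_{\operatorname{t}(a_\ell)}V=0$ for all $\ell$, i.e. $\epsilon_DV=V$; then fix $i_0$ with $e_{i_0}V\neq0$ (necessarily $i_0\notin\{\operatorname{t}(a_1),\dots,\operatorname{t}(a_n)\}$). For any other vertex $j$ with $e_j\epsilon_D=e_j$, $i_0$ is not the tail of an arrow $a\in Q_1^{\operatorname{t}}$ with $x_D\mid\overbar{a}$, so Lemma \ref{green} yields $p\in e_jAe_{i_0}$ with $x_D\nmid\overbar{p}$; its leftmost arrow is not a $\delta_a$ with $x_D\mid\overbar{a}$ (such an arrow would force $j\in\{\operatorname{t}(a_\ell)\}$), so $p$ is vertex invertible in $A_{\mathfrak q}$ by Lemma \ref{vertex invertible}, giving $e_jV=p\,e_{i_0}V\neq0$; the symmetric argument (reaching an indegree-$1$ vertex $\operatorname{t}(a)$ with $x_D\nmid\overbar{a}$ through $\delta_a$) produces vertex-invertible paths in both directions between all vertices $i$ with $e_i\epsilon_D=e_i$. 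Since $\mathfrak q$ acts as $0$ on every $e_iV$ by Lemma \ref{ann R} and all these corner rings share the residue field of $S_{\mathfrak q}$, one transports a chosen isomorphism $e_{i_0}V\cong e_{i_0}V_0$ along these paths to an $A_{\mathfrak q}$-isomorphism $V\cong V_0$; equivalently, the two-sided connectivity shows $\epsilon_DA_{\mathfrak q}\epsilon_D$ modulo its radical is simple Artinian, hence has a unique simple module $\epsilon_DV_0$. This also shows $V_0$ is simple and $V_0\not\cong V_\ell$ (as $e_iV_0\neq0$ for $i\notin\{\operatorname{t}(a_\ell)\}$ while $\dim_kV_\ell=1$), so $V_0,V_1,\dots,V_n$ are exactly the simple $A_{\mathfrak q}$-modules.

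\textbf{Main obstacle.} The crux is the case $\epsilon_DV=V$: proving $e_iV\neq0$ for every relevant $i$ and that the transport along vertex-invertible paths is well defined and compatible with the relations of $A_{\mathfrak q}$ — in short, that $\epsilon_DA_{\mathfrak q}\epsilon_D$ has a unique simple module. Once the vertex-invertibility bookkeeping (Lemmas \ref{green}, \ref{vertex invertible}) and the annihilation statements (Lemmas \ref{annihilate}, \ref{ann R}) together with Propositions \ref{prime2} and \ref{principal} are in place, the remaining verifications are routine.
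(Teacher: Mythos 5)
Your proof is correct and follows essentially the same route as the paper's: both use the dichotomy from Lemma \ref{annihilate} (vertex simple vs.\ $\epsilon_D$-supported), and then combine Lemmas \ref{green}, \ref{vertex invertible}, \ref{simple dim}, and \ref{ann R} to show that every $\epsilon_D$-supported simple is isomorphic to $V_0$. The paper packages the final identification via the exact sequence $0 \to A_{\mathfrak{q}}\mathfrak{q}\epsilon_D \to A_{\mathfrak{q}}\epsilon_D \to V \to 0$ rather than your transport-along-vertex-invertible-paths argument, but the underlying content---that $\epsilon_D A_{\mathfrak{q}}\epsilon_D$ modulo $\mathfrak{q}$ is simple Artinian with unique simple module $\epsilon_D V_0$---is the same.
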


\begin{proof}
Let $V$ be a simple $A_{\mathfrak{q}}$-module. 
Let $a \in Q_1^{\operatorname{t}}$ be such that $x_D \mid \overbar{a}$. 
Then either $V$ is the vertex simple $V = ke_{\operatorname{t}(a)}$, or $e_{\operatorname{t}(a)}$ annihilates $V$, by Lemma \ref{annihilate}. 

So suppose $e_{\operatorname{t}(a)}V = 0$ for each $a \in Q_1^{\operatorname{t}}$ satisfying $x_D \mid \overbar{a}$.
We want to show that the sequence of left $A_{\mathfrak{q}}$-modules
$$0 \to A_{\mathfrak{q}}\mathfrak{q} \epsilon_D \longrightarrow A_{\mathfrak{q}} \epsilon_D \stackrel{g}{\longrightarrow} V \to 0$$
is exact.

We first claim that $g$ is onto.
Indeed, since $V \not = 0$, there is a vertex summand $e_i$ of $\epsilon_D$ for which $e_iV \not = 0$. 
Let $e_j$ be an arbitrary vertex summand of $\epsilon_D$.
Then there is a path $p \in e_jAe_i$ satisfying $x_D \nmid \overbar{p}$, by Lemma \ref{green}.
Thus, since $e_j$ is a summand of $\epsilon_D$, $p$ is vertex invertible by Lemma \ref{vertex invertible}.
Whence $e_jV \not = 0$ since $e_iV \not = 0$. 
Therefore $g$ is onto by Lemma \ref{simple dim}.

We now claim that the kernel of $g$ is $A_{\mathfrak{q}}\mathfrak{q} \epsilon_D$.
Let $b \in \epsilon_D A \epsilon_D$ be an arrow satisfying $bV = 0$.
Then there is a path $p \in e_{\operatorname{t}(b)}Ae_{\operatorname{h}(b)}$ satisfying $x_D \nmid \overbar{p}$, by Lemma \ref{green}.
Thus, since $e_{\operatorname{t}(b)}$ and $e_{\operatorname{h}(b)}$ are vertex summands of $\epsilon_D$, $p$ is vertex invertible in $A_{\mathfrak{q}}$ by Lemma \ref{vertex invertible}.
Whence
$$b = (p^*p)b = p^*(pb) \in A_{\mathfrak{q}}\mathfrak{q}\epsilon_D.$$
Thus the $A_{\mathfrak{q}}\epsilon_D$-annihilator of $V$ is $A_{\mathfrak{q}}\mathfrak{q}\epsilon_D$, by Lemma \ref{simple dim}.

Therefore $V = V_0$.
The simple modules $V_0, \ldots, V_n$ exhaust the possible simple $A_{\mathfrak{q}}$-modules, again by Lemma \ref{simple dim}.
\end{proof}

If $p \in A_{\mathfrak{q}}$ is a concatenation of paths and vertex inverses of paths in $A$, then we call $p$ a \textit{path}. 

\begin{Lemma} \label{free} 
Suppose $i \in Q_0$ satisfies $e_i \epsilon_D \not = 0$. 
Then for each $j \in Q_0$, the corner rings $e_jA_{\mathfrak{q}}e_i$ and $e_iA_{\mathfrak{q}}e_j$ are cyclic free $S_{\mathfrak{q}}$-modules.
Consquently, $A_{\mathfrak{q}}e_i$ and $e_iA_{\mathfrak{q}}$ are free $S_{\mathfrak{q}}$-modules. 
\end{Lemma}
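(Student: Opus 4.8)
The plan is to reduce to two claims: (a) $e_iA_{\mathfrak{q}}e_i\cong S_{\mathfrak{q}}$ as rings, and (b) for each $j\in Q_0$ the right $e_iA_{\mathfrak{q}}e_i$-module $e_jA_{\mathfrak{q}}e_i$ and the left $e_iA_{\mathfrak{q}}e_i$-module $e_iA_{\mathfrak{q}}e_j$ are free of rank $1$. The ``consequently'' part then follows from the idempotent decompositions $A_{\mathfrak{q}}e_i=\bigoplus_{j\in Q_0}e_jA_{\mathfrak{q}}e_i$ and $e_iA_{\mathfrak{q}}=\bigoplus_{j\in Q_0}e_iA_{\mathfrak{q}}e_j$ of right, resp.\ left, $S_{\mathfrak{q}}$-modules. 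The engine of the proof is the observation that $e_i\epsilon_D\neq0$ says precisely that $i$ is not the tail of any $a\in Q_1^{\operatorname{t}}$ with $x_D\mid\overbar{a}$, i.e.\ that $i$ satisfies the hypothesis of Lemma~\ref{green}: hence for every vertex $j$ with $e_j\epsilon_D\neq0$ (in particular every vertex of indegree $\geq2$) Lemma~\ref{green} yields a path $p\in e_jAe_i$ with $x_D\nmid\overbar{p}$, whose leftmost arrow, if $p$ is nontrivial, has head $j$ and so cannot be an arrow $\delta_{a'}\in Q_1^*$ with $x_D\mid\overbar{a'}$ (else $j=\operatorname{t}(a')$ would be removed); by Lemma~\ref{vertex invertible}, $p$ is then vertex invertible in $A_{\mathfrak{q}}$. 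Thus any two non-removed vertices are joined by a vertex-invertible path.

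For (a): by assumption (B) some vertex $i''$ has $\operatorname{deg}^+i''\geq2$, so $\bar\tau_{\psi}(e_{i''}Ae_{i''})=S$ by Lemma~\ref{over}.4, and the definition of the cyclic localization gives $S_{\mathfrak{q}}=(A^{i''})_{\mathfrak{q}\cap A^{i''}}\subseteq e_{i''}A_{\mathfrak{q}}e_{i''}$. For the reverse inclusion I would use that $A_{\mathfrak{q}}$ is the subalgebra of $M_{|Q_0|}(\operatorname{Frac}B)$ generated by $\tau_{\psi}(A)$ together with the matrix units $s^{-1}e_{\ell\ell}$, $s\in A^{\ell}\setminus\mathfrak{q}$, $\ell\in Q_0$; reading off the $(i'',i'')$-entry of a product of these generators realises it as a sum of terms $\overbar{c}\prod_l s_l^{-1}$, where $c$ is a cycle at $i''$ (so $\overbar{c}\in S$) and each $s_l\in A^{\ell_l}\setminus\mathfrak{q}\subseteq S\setminus\mathfrak{q}$, whence it lies in $S_{\mathfrak{q}}$. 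So $e_{i''}A_{\mathfrak{q}}e_{i''}=S_{\mathfrak{q}}$, and conjugating by a vertex-invertible path $q\colon i\to i''$ gives a ring isomorphism $z\mapsto q^*zq$ from $e_{i''}A_{\mathfrak{q}}e_{i''}$ onto $e_iA_{\mathfrak{q}}e_i$. I also record that $S_{\mathfrak{q}}$ is a discrete valuation ring: $S$ is, by Lemma~\ref{over}.2 and the fact that $A'$ is an NCCR, a finitely generated normal domain, hence Noetherian and normal, while $\operatorname{ht}_S\mathfrak{q}=1$ by Theorem~\ref{height}.1.

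For (b), if $j$ is non-removed pick a vertex-invertible path $p\colon i\to j$; then $z\mapsto pz$ and $z\mapsto zp^*$ are isomorphisms of right, resp.\ left, $e_iA_{\mathfrak{q}}e_i$-modules from $e_iA_{\mathfrak{q}}e_i$ onto $e_jA_{\mathfrak{q}}e_i$, resp.\ $e_iA_{\mathfrak{q}}e_j$, with inverses $y\mapsto p^*y$ and $y\mapsto yp$ (injectivity being immediate from the injectivity of $\tau_{\psi}$ on $A_{\mathfrak{q}}$). If $j$ is removed, say $j=\operatorname{t}(a)$ with $a\in Q_1^{\operatorname{t}}$ and $x_D\mid\overbar{a}$, then $\delta_a$ is the only arrow into $j$ and $a$ the only arrow out, and by assumption (B) the vertices $\operatorname{t}(\delta_a)$ and $\operatorname{h}(a)$ have indegree $\geq2$, hence are non-removed. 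Left multiplication by $a$ is an injective homomorphism of right $S_{\mathfrak{q}}$-modules $e_jA_{\mathfrak{q}}e_i\hookrightarrow e_{\operatorname{h}(a)}A_{\mathfrak{q}}e_i\cong S_{\mathfrak{q}}$ (using (a) and the non-removed case), so $e_jA_{\mathfrak{q}}e_i$ is a nonzero ideal of the DVR $S_{\mathfrak{q}}$ — nonzero because it contains $\delta_aq\neq0$ for a vertex-invertible $q\colon i\to\operatorname{t}(\delta_a)$ — hence free of rank $1$; symmetrically right multiplication by $\delta_a$ embeds $e_iA_{\mathfrak{q}}e_j$ as a nonzero ideal of $e_iA_{\mathfrak{q}}e_{\operatorname{t}(\delta_a)}\cong S_{\mathfrak{q}}$, again free of rank $1$ (using $e_iAe_j\neq0$, as the dimer quiver on the torus is strongly connected). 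This establishes (a) and (b) and hence the lemma.

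I expect the technical heart to be the reverse inclusion $e_{i''}A_{\mathfrak{q}}e_{i''}\subseteq S_{\mathfrak{q}}$ in (a): the rest is manipulation with vertex-invertible paths and the DVR property, but this inclusion requires describing the cyclic localization concretely as a matrix algebra and checking that the denominators introduced at the various vertices all lie in $S\setminus\mathfrak{q}$, so that no diagonal corner entry of a product of generators can leave $S_{\mathfrak{q}}$.
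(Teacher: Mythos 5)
Your proposal is correct and follows the same overall strategy as the paper: first establish $e_iA_{\mathfrak{q}}e_i \cong S_{\mathfrak{q}}$, then use a vertex-invertible path from $i$ to $j$ (existence via Lemma~\ref{green} plus Lemma~\ref{vertex invertible}) to transport freeness, treating the removed vertices $j = \operatorname{t}(a)$ with $x_D \mid \overbar{a}$ separately. The one genuine point of divergence is the removed-vertex case: the paper produces the single generator explicitly (every $t \in e_jA_{\mathfrak{q}}e_i$ factors through $\delta_a$, and then $t = \delta_a s\,(s^*r) \in \delta_a s\, S_{\mathfrak{q}}$ for a fixed vertex-invertible $s\colon i \to \operatorname{t}(\delta_a)$, giving $e_jA_{\mathfrak{q}}e_i = \delta_a s\, S_{\mathfrak{q}}$), whereas you inject $e_jA_{\mathfrak{q}}e_i$ into $e_{\operatorname{h}(a)}A_{\mathfrak{q}}e_i \cong S_{\mathfrak{q}}$ by left multiplication by $a$ and invoke that a nonzero ideal of the DVR $S_{\mathfrak{q}}$ is principal. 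Both are valid; the paper's version is more constructive (it names the generator) and does not need the DVR observation, while yours is slightly less work once you know $S_{\mathfrak{q}}$ is a DVR. Your careful treatment of the reverse inclusion $e_{i''}A_{\mathfrak{q}}e_{i''} \subseteq S_{\mathfrak{q}}$ — tracking all denominators through the generating matrix — is correct and fills in a point the paper's proof passes over; the paper simply asserts $e_iA_{\mathfrak{q}}e_i = S_{\mathfrak{q}}e_i$ when $e_iAe_i = Se_i$, implicitly relying on exactly the bookkeeping you carry out.
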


\begin{proof} 
Suppose $e_i$ is a vertex summand of $\epsilon_D$.
Then either $e_iAe_i = Se_i$, or $i = \operatorname{t}(a)$ for some $a \in Q_1^{\operatorname{t}}$ with $x_D \nmid \overbar{a}$, by Lemma \ref{over}.4.
In the latter case, $a$ is vertex invertible by Lemma \ref{vertex invertible}, and $e_{\operatorname{h}(a)}Ae_{\operatorname{h}(a)} = Se_{\operatorname{h}(a)}$ by Lemma \ref{over}.4.
Thus in either case we have 
$$e_iA_{\mathfrak{q}}e_i = S_{\mathfrak{q}}e_i.$$
Therefore $A_{\mathfrak{q}}e_i$ and $e_iA_{\mathfrak{q}}$ are $S_{\mathfrak{q}}$-modules.

(i) We claim that for each $j \in Q_0$, $e_jA_{\mathfrak{q}}e_i$ is generated as an $S_{\mathfrak{q}}$-module by a single path; a similar argument holds for $e_iA_{\mathfrak{q}}e_j$.

(i.a) First suppose $j$ is not the tail of an arrow $a \in Q_1^{\operatorname{t}}$ for which $x_D \mid \overbar{a}$. 
Since $D \in \mathcal{S}'$ is a simple matching of $Q'$, there is path $s$ from $i$ to $j$ for which $x_D \nmid \overbar{s}$ (that is, $\psi(s)$ is supported on $Q' \setminus D$).
Thus $s$ has a vertex inverse $s^* \in e_iA_{\mathfrak{q}}e_j$, by Lemma \ref{vertex invertible}.

Let $t \in e_jA_{\mathfrak{q}}e_i$ be arbitrary.
Then $s^*t$ is in $e_iA_{\mathfrak{q}}e_i = S_{\mathfrak{q}}e_i$.
Whence
$$t = ss^*t \in s S_{\mathfrak{q}}.$$
Therefore $e_jA_{\mathfrak{q}}e_i = sS_{\mathfrak{q}}$.

(i.b) Now suppose $j$ is the tail of an arrow $a \in Q_1^{\operatorname{t}}$ for which $x_D \mid \overbar{a}$; in particular, $j \not = i$.
Since $D \in \mathcal{S}'$ is a simple matching of $Q'$, there is path $s$ from $i$ to $\operatorname{t}(\delta_a)$ for which $x_D \nmid \overbar{s}$.
Thus $s$ has a vertex inverse $s^* \in e_iA_{\mathfrak{q}}e_{\operatorname{t}(\delta_a)}$, again by Lemma \ref{vertex invertible}.

Let $t \in e_jA_{\mathfrak{q}}e_i$ be arbitrary.
Since $j \not = i$ and $\deg^+ j = 1$, there is some $r \in e_{\operatorname{t}(\delta_a)}A_{\mathfrak{q}}e_i$ satisfying $t = \delta_a r$.
Whence
$$t = \delta_a r = \delta_a s s^* r \in \delta_a s S_{\mathfrak{q}}.$$
Therefore $e_jA_{\mathfrak{q}}e_i = \delta_a s S_{\mathfrak{q}}$.

(ii) Finally, we claim that $e_jA_{\mathfrak{q}}e_i$ is a free $S_{\mathfrak{q}}$-module; a similar argument holds for $e_iA_{\mathfrak{q}}e_j$.
By Claim (i), there is a path $s$ such that
$$e_jA_{\mathfrak{q}}e_i = sS_{\mathfrak{q}}.$$
Furthermore, the $S_{\mathfrak{q}}$-module homomorphism
$$S_{\mathfrak{q}} \to sS_{\mathfrak{q}}, \ \ \ \ t \mapsto st,$$
is an isomorphism since $S_{\mathfrak{q}}$ and $\overbar{s}$ belong to the domain $\operatorname{Frac}B$, and $\bar{\tau}_{\psi}$ is injective.
\end{proof}

\begin{Lemma} \label{proj res}
The $A_{\mathfrak{q}}$-module $V_0$ satisfies
$$\operatorname{pd}_{A_{\mathfrak{q}}}\left(V_0 \right) \leq \operatorname{pd}_{S_{\mathfrak{q}}}\left(S_{\mathfrak{q}}/\mathfrak{q} \right).$$
\end{Lemma}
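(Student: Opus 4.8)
The plan is to realize $V_0$ as the extension of scalars of the $S_{\mathfrak{q}}$-module $S_{\mathfrak{q}}/\mathfrak{q}$ along a bimodule that is projective on the left and flat on the right, and then to run the standard change-of-rings estimate. First I would pin down the bimodule structure on $P := A_{\mathfrak{q}}\epsilon_D$. It is a left $A_{\mathfrak{q}}$-module, and since $\epsilon_D$ is idempotent it is a direct summand of $A_{\mathfrak{q}}$, hence projective over $A_{\mathfrak{q}}$. For the right action, note that for each vertex summand $e_i$ of $\epsilon_D$ the proof of Lemma \ref{free} gives $e_iA_{\mathfrak{q}}e_i = S_{\mathfrak{q}}e_i$; writing $\hat{s}_D := \sum_{e_i\epsilon_D \neq 0} (se_i) \in \epsilon_D A_{\mathfrak{q}}\epsilon_D$ for the element with $\bar{\tau}_{\psi}(se_i) = s$ on each such corner, the assignment $s \mapsto \hat{s}_D$ is a ring homomorphism from $S_{\mathfrak{q}}$ into the center of $\epsilon_D A_{\mathfrak{q}}\epsilon_D$ — this is immediate from the injectivity of $\bar{\tau}_{\psi}$ and the commutativity of $\operatorname{Frac}B$. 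Hence right multiplication by $\hat{s}_D$ makes $P$ a right $S_{\mathfrak{q}}$-module, and this action commutes with the left $A_{\mathfrak{q}}$-action by associativity. Finally, by Lemma \ref{free}, $P = \bigoplus_{e_i\epsilon_D \neq 0} A_{\mathfrak{q}}e_i$ is a direct sum of free $S_{\mathfrak{q}}$-modules, hence free — in particular flat — as a right $S_{\mathfrak{q}}$-module.

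Next I would observe that $P \otimes_{S_{\mathfrak{q}}} (S_{\mathfrak{q}}/\mathfrak{q}) = P/P\mathfrak{q}$, with $P\mathfrak{q}$ the image of $P\otimes\mathfrak{q}$, is exactly the left $A_{\mathfrak{q}}$-module $A_{\mathfrak{q}}\epsilon_D/A_{\mathfrak{q}}\mathfrak{q}\epsilon_D = (S_{\mathfrak{q}}/\mathfrak{q})\epsilon_D = V_0$ of (\ref{V0}). If $\operatorname{pd}_{S_{\mathfrak{q}}}(S_{\mathfrak{q}}/\mathfrak{q}) = \infty$ there is nothing to prove, so assume it equals a finite $m$ and choose a resolution $0 \to F_m \to \cdots \to F_0 \to S_{\mathfrak{q}}/\mathfrak{q} \to 0$ by finitely generated free $S_{\mathfrak{q}}$-modules. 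Applying $P \otimes_{S_{\mathfrak{q}}} -$ and using flatness of $P$ over $S_{\mathfrak{q}}$ yields an exact sequence
$$0 \to P \otimes_{S_{\mathfrak{q}}} F_m \to \cdots \to P \otimes_{S_{\mathfrak{q}}} F_0 \to V_0 \to 0$$
of left $A_{\mathfrak{q}}$-modules. Since each $F_i \cong S_{\mathfrak{q}}^{\,n_i}$, we get $P \otimes_{S_{\mathfrak{q}}} F_i \cong P^{\,n_i}$, a finite direct sum of copies of the projective $A_{\mathfrak{q}}$-module $P$, hence projective over $A_{\mathfrak{q}}$. Thus the sequence is a length-$m$ projective resolution of $V_0$, giving $\operatorname{pd}_{A_{\mathfrak{q}}}(V_0) \leq m = \operatorname{pd}_{S_{\mathfrak{q}}}(S_{\mathfrak{q}}/\mathfrak{q})$.

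The only genuinely delicate point — and the step I would handle most carefully — is the compatibility asserted in the first paragraph: that the corner-wise right $S_{\mathfrak{q}}$-actions supplied by Lemma \ref{free} assemble into one bimodule structure on $A_{\mathfrak{q}}\epsilon_D$ commuting with the left $A_{\mathfrak{q}}$-action, and that under it $P \otimes_{S_{\mathfrak{q}}}(S_{\mathfrak{q}}/\mathfrak{q})$ is literally $V_0$ rather than merely an entrywise reduction isomorphic to copies of $S_{\mathfrak{q}}/\mathfrak{q}$. Once $\bar{\tau}_{\psi}$ is used to identify all these actions inside the commutative field $\operatorname{Frac}B$, this is forced; the remainder is the textbook fact that tensoring with a flat bimodule does not raise projective dimension.
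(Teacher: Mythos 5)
Your proof is correct and follows essentially the same route as the paper: identify $V_0$ with $A_{\mathfrak{q}}\epsilon_D \otimes_{S_{\mathfrak{q}}} S_{\mathfrak{q}}/\mathfrak{q}$, use Lemma \ref{free} to see $A_{\mathfrak{q}}\epsilon_D$ is free (hence flat) over $S_{\mathfrak{q}}$, and tensor a free $S_{\mathfrak{q}}$-resolution to obtain an $A_{\mathfrak{q}}$-projective resolution of the same length. Your more explicit treatment of the $A_{\mathfrak{q}}$-$S_{\mathfrak{q}}$-bimodule structure on $A_{\mathfrak{q}}\epsilon_D$ is a welcome clarification of a point the paper leaves implicit, but it is not a different argument.
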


\begin{proof}
Consider a minimal free resolution of $S_{\mathfrak{q}}/\mathfrak{q}$ over $S_{\mathfrak{q}}$,
$$\cdots \to S_{\mathfrak{q}}^{\oplus n_1} \to S_{\mathfrak{q}} \to S_{\mathfrak{q}}/\mathfrak{q} \to 0.$$
Set $\epsilon := \epsilon_D$.
By Lemma \ref{free}, $A_{\mathfrak{q}}\epsilon$ is a free $S_{\mathfrak{q}}$-module.
Thus $A_{\mathfrak{q}}\epsilon$ is a flat $S_{\mathfrak{q}}$-module, that is, the functor $A_{\mathfrak{q}}\epsilon \otimes_{S_{\mathfrak{q}}} -$ is exact.
Therefore the sequence of left $A_{\mathfrak{q}}$-modules
\begin{equation} \label{proj res V0}
\cdots \to A_{\mathfrak{q}}\epsilon \otimes S_{\mathfrak{q}}^{\oplus n_1} \to A_{\mathfrak{q}}\epsilon \otimes S_{\mathfrak{q}} \to A_{\mathfrak{q}}\epsilon \otimes S_{\mathfrak{q}}/\mathfrak{q} \to 0
\end{equation}
is exact.
Each term is a projective $A_{\mathfrak{q}}$-module since
$$A_{\mathfrak{q}} \epsilon \otimes_{S_{\mathfrak{q}}} \left( S_{\mathfrak{q}}^{\oplus n_i} \right) \cong \left( A_{\mathfrak{q}} \epsilon \right)^{\oplus n_i}.$$
Furthermore, there is a left $A_{\mathfrak{q}}$-module isomorphism
$$V_0 = A_{\mathfrak{q}} \epsilon / A_{\mathfrak{q}} \mathfrak{q} \epsilon \cong A_{\mathfrak{q}}\epsilon \otimes_{S_{\mathfrak{q}}} S_{\mathfrak{q}}/\mathfrak{q}.$$
Therefore (\ref{proj res V0}) is a projective resolution of $V_0$ over $A_{\mathfrak{q}}$ of length at most $\operatorname{pd}_{S_{\mathfrak{q}}}\left(S_{\mathfrak{q}}/\mathfrak{q} \right)$.
\end{proof}

\begin{Lemma} \label{Sq regular}
The local ring $S_{\mathfrak{q}}$ is regular.
\end{Lemma}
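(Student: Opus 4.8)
The statement follows by assembling results already in hand and invoking the definition of a regular local ring. The plan is to check that $S_{\mathfrak{q}}$ is a Noetherian local domain of Krull dimension $1$ whose maximal ideal is principal; any such ring is regular (indeed a discrete valuation ring).

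First I would record that $S$ is Noetherian. By construction $S = k\left[\cup_{i \in Q_0'} \bar{\tau}(e_iA'e_i)\right]$ is a finitely generated $k$-algebra, and by Lemma \ref{over}.2 it is isomorphic to the center $Z'$ of $A'$; since $A'$ is an NCCR, $Z'$ is in particular a Noetherian normal domain. Hence every localization $S_{\mathfrak{q}}$ is a Noetherian local domain. Next, $\mathfrak{q} = \mathfrak{q}_D$ is a prime ideal of $S$ by Lemma \ref{prime1}, and it is nonzero: the monomial $\sigma = \prod_{E \in \mathcal{S}'} x_E$ lies in $S$ and is divisible by $x_D$ in $B$, so $\sigma \in \mathfrak{q}$. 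By Theorem \ref{height}.1 we have $\operatorname{ht}_S(\mathfrak{q}) = 1$, so $\dim S_{\mathfrak{q}} = 1$.

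Finally, Proposition \ref{principal} gives $\mathfrak{q} S_{\mathfrak{q}} = \sigma S_{\mathfrak{q}}$, so the maximal ideal of the one-dimensional Noetherian local ring $S_{\mathfrak{q}}$ is generated by a single element. Therefore the embedding dimension of $S_{\mathfrak{q}}$ equals its Krull dimension, and $S_{\mathfrak{q}}$ is regular. There is no substantive obstacle in this argument: the real content has already been carried out in Proposition \ref{principal} (principality of $\mathfrak{q}$ after localization) together with Theorem \ref{height}.1 (the height computation), and the only thing to be careful about is ensuring $S$ itself is Noetherian so that the notions of Krull dimension and regularity behave as expected.
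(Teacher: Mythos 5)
Your proof is correct, but it takes a genuinely different route from the paper's. The paper argues via normality: since $S$ is isomorphic to the center of the noetherian NCCR $A'$, it is a normal noetherian domain, so by Serre's criterion ($R_1$) the singular locus of $\operatorname{Spec}S$ has codimension $\geq 2$; as $\mathfrak{q}$ has height $1$ by Theorem \ref{height}.1, its generic point must be a regular point, i.e.\ $S_{\mathfrak{q}}$ is regular. You instead invoke Proposition \ref{principal} directly to see that the maximal ideal $\mathfrak{q}S_{\mathfrak{q}} = \sigma S_{\mathfrak{q}}$ is principal in the noetherian local domain $S_{\mathfrak{q}}$ of dimension $1$, which forces $S_{\mathfrak{q}}$ to be a DVR, hence regular. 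Your argument is the more concrete and self-contained of the two, in that it exhibits the regularity explicitly rather than citing an abstract normality criterion; indeed you could even drop the reference to Theorem \ref{height}.1 entirely, since once $\mathfrak{q}S_{\mathfrak{q}}$ is known to be a nonzero principal ideal, Krull's principal ideal theorem already gives $\dim S_{\mathfrak{q}} = 1$ and the DVR conclusion. The paper's route, on the other hand, is shorter given that normality of $S$ is already in hand from the NCCR structure of $A'$, and it sidesteps Proposition \ref{principal} (though that proposition was needed anyway to prove Theorem \ref{height}.1). One small cosmetic point: you mention normality of $Z'$ when establishing that $S$ is noetherian, but that part of your argument only needs finite generation of $S$ as a $k$-algebra (which is part of the definition of a depiction, Lemma \ref{over}.3), so the appeal to normality is not load-bearing in your write-up.
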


\begin{proof}
$S$ is normal since $S$ is isomorphic to the center of the (noetherian) NCCR $A'$.
In particular, the singular locus of $\operatorname{Max}S$ has codimension at least 2.
Furthermore, the zero locus $\mathcal{Z}(\mathfrak{q})$ in $\operatorname{Max}S$ has codimension 1, by Theorem \ref{height}.1.
Therefore $\mathcal{Z}(\mathfrak{q})$ contains a smooth point of $\operatorname{Max}S$. 
\end{proof}

\begin{Proposition} \label{cycle hom}
Let $\mathfrak{q} \in \operatorname{Spec}S$ be minimal over $\mathfrak{m}_0$.
Then each simple $A_{\mathfrak{q}}$-module has projective dimension 1.
Consequently, for each simple $A_{\mathfrak{q}}$-module $V$,
$$\operatorname{pd}_{A_{\mathfrak{q}}}(V) = \operatorname{ht}_S(\mathfrak{q}).$$
\end{Proposition}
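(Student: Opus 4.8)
The plan is to combine Theorem \ref{simples}, which classifies the simple $A_{\mathfrak{q}}$-modules, with explicit projective resolutions for each of them. Write $\mathfrak{q} = \mathfrak{q}_D$, and let $a_1,\dots,a_n \in Q_1^{\operatorname{t}}$ be the arrows with $x_D \mid \overline{a}_\ell$; then $n \ge 1$ by Theorem \ref{height}.2, and by Theorem \ref{simples} the simple $A_{\mathfrak{q}}$-modules are $V_0 = A_{\mathfrak{q}}\epsilon_D/A_{\mathfrak{q}}\mathfrak{q}\epsilon_D$ together with the vertex simples $V_\ell = ke_{\operatorname{t}(a_\ell)}$, $1 \le \ell \le n$. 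I will show $\operatorname{pd}_{A_{\mathfrak{q}}}(V) = 1$ for each of these; since $\operatorname{ht}_S(\mathfrak{q}) = 1$ by Theorem \ref{height}.1, this yields the displayed equality.

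\emph{Upper bounds.} For $V_0$, Lemma \ref{proj res} gives $\operatorname{pd}_{A_{\mathfrak{q}}}(V_0) \le \operatorname{pd}_{S_{\mathfrak{q}}}(S_{\mathfrak{q}}/\mathfrak{q})$; by Lemma \ref{Sq regular} and Theorem \ref{height}.1 the ring $S_{\mathfrak{q}}$ is regular local of dimension $1$, hence a DVR, and $\mathfrak{q}S_{\mathfrak{q}} = \sigma S_{\mathfrak{q}}$ by Proposition \ref{principal}, so $\operatorname{pd}_{S_{\mathfrak{q}}}(S_{\mathfrak{q}}/\mathfrak{q}) = 1$ and the resolution of Lemma \ref{proj res} takes the form $0 \to A_{\mathfrak{q}}\epsilon_D \xrightarrow{\,\cdot\sigma\,} A_{\mathfrak{q}}\epsilon_D \to V_0 \to 0$. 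For $V_\ell$, put $a = a_\ell$ and $i = \operatorname{t}(a)$; since $\operatorname{deg}^+ i = 1$, $a$ is the unique arrow out of $i$, every nontrivial path with tail $i$ factors (on the right) through $a$, and no nontrivial path with head $i$ is vertex invertible in $A_{\mathfrak{q}}$ (by Lemma \ref{vertex invertible}, as its leftmost arrow subpath is $\delta_a$ and $x_D \mid \overline{a}$). From this one checks that $A_{\mathfrak{q}}e_i = ke_i \oplus A_{\mathfrak{q}}a$ as $k$-spaces, that $A_{\mathfrak{q}}a = A_{\mathfrak{q}}e_{\operatorname{h}(a)}a$ is a submodule, and that the one-dimensional quotient $A_{\mathfrak{q}}e_i/A_{\mathfrak{q}}a$ carries exactly the $A_{\mathfrak{q}}$-action of $V_\ell$ (here Lemma \ref{ann R} gives that the corner $e_iA_{\mathfrak{q}}e_i$ acts through $k$, since every nontrivial cycle at $i$ has $\bar\tau_{\psi}$-image divisible by $\overline{a}$ and hence lies in $\mathfrak{q}$). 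Because right multiplication by $a$ gives an injection $A_{\mathfrak{q}}e_{\operatorname{h}(a)} \hookrightarrow A_{\mathfrak{q}}e_i$ ($\tau_{\psi}$ is injective and $\overline{a}\neq 0$ in the domain $\operatorname{Frac}B$), we obtain a length-one projective resolution
$$
0 \longrightarrow A_{\mathfrak{q}}e_{\operatorname{h}(a)} \xrightarrow{\ \cdot a\ } A_{\mathfrak{q}}e_i \longrightarrow V_\ell \longrightarrow 0,
$$
so $\operatorname{pd}_{A_{\mathfrak{q}}}(V_\ell) \le 1$.

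\emph{Lower bounds.} It remains to show no simple $A_{\mathfrak{q}}$-module is projective, which I do by applying $\operatorname{Hom}_{A_{\mathfrak{q}}}(-, V_0)$ to the two resolutions above. For $V_0$, the map induced by $\cdot\sigma$ is right multiplication by $\sigma$ on $\operatorname{Hom}_{A_{\mathfrak{q}}}(A_{\mathfrak{q}}\epsilon_D, V_0) \cong \epsilon_D V_0$, and this is $0$ because $\sigma \in \mathfrak{q}$ annihilates $V_0 \cong (S_{\mathfrak{q}}/\mathfrak{q})\epsilon_D$; hence $\operatorname{Ext}^1_{A_{\mathfrak{q}}}(V_0, V_0) \cong \epsilon_D V_0 = V_0 \neq 0$. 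For $V_\ell$, the resolution gives $\operatorname{Ext}^1_{A_{\mathfrak{q}}}(V_\ell, V_0) \cong \operatorname{coker}\bigl(e_{\operatorname{t}(a)}V_0 \xrightarrow{\,a\,\cdot\,} e_{\operatorname{h}(a)}V_0\bigr)$, and here $e_{\operatorname{t}(a)}V_0 = 0$ (since $e_{\operatorname{t}(a)}\epsilon_D = 0$) while $e_{\operatorname{h}(a)}V_0 = (S_{\mathfrak{q}}/\mathfrak{q})e_{\operatorname{h}(a)} \neq 0$ (since $\operatorname{deg}^+\operatorname{h}(a) \ge 2$ by assumption (B), so $e_{\operatorname{h}(a)}$ is a vertex summand of $\epsilon_D$); thus $\operatorname{Ext}^1_{A_{\mathfrak{q}}}(V_\ell, V_0) \neq 0$. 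Therefore every simple $A_{\mathfrak{q}}$-module has projective dimension exactly $1 = \operatorname{ht}_S(\mathfrak{q})$.

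\emph{Main obstacle.} The delicate point is the projective resolution of the vertex simple $V_\ell$: one must pin down the left $A_{\mathfrak{q}}$-module $A_{\mathfrak{q}}e_{\operatorname{t}(a_\ell)}$ near the ``bad'' vertex $\operatorname{t}(a_\ell)$ — where the corner ring is only $k$ plus a localized ideal of nontrivial cycles rather than all of $S_{\mathfrak{q}}$, so Lemma \ref{free} does not apply — and verify, using $\operatorname{deg}^+\operatorname{t}(a_\ell) = 1$ and the non--vertex-invertibility of the unique arrow $\delta_{a_\ell}$ into $\operatorname{t}(a_\ell)$, that its radical is precisely $A_{\mathfrak{q}}a_\ell \cong A_{\mathfrak{q}}e_{\operatorname{h}(a_\ell)}$, which is projective. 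Everything else is bookkeeping with the lemmas already in place.
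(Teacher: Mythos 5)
Your proposal is correct and follows the same overall strategy as the paper: classify the simple $A_{\mathfrak{q}}$-modules via Theorem \ref{simples}, then exhibit explicit length-one projective resolutions for each. The only substantive difference is in establishing the lower bound $\operatorname{pd}_{A_{\mathfrak{q}}}(V)\geq 1$: the paper disposes of it in one line by asserting that $V_0$ and each $V_\ell$ are ``clearly not a direct summand of a free $A_{\mathfrak{q}}$-module,'' whereas you apply $\operatorname{Hom}_{A_{\mathfrak{q}}}(-,V_0)$ to the two resolutions and compute $\operatorname{Ext}^1_{A_{\mathfrak{q}}}(V_0,V_0)\cong\epsilon_D V_0\neq 0$ and $\operatorname{Ext}^1_{A_{\mathfrak{q}}}(V_\ell,V_0)\cong e_{\operatorname{h}(a_\ell)}V_0\neq 0$. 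Your Ext calculation is a genuinely more careful and self-contained justification of the non-projectivity; the paper's assertion, while not wrong, requires the reader to see it. For the vertex simples the paper proves exactness of the middle term by a more hands-on syzygy argument (writing an element of $\ker(\cdot 1)$ as $\sum s_j^{-1}p_j$, factoring nontrivial paths through $a$, and deriving a contradiction from the trivial terms), which amounts to the same decomposition $A_{\mathfrak{q}}e_i = ke_i\oplus A_{\mathfrak{q}}a$ that you invoke. Your identification of the ``bad'' vertex $\operatorname{t}(a_\ell)$ and the role of $\delta_{a_\ell}$'s non-invertibility as the crux of the argument is exactly right.
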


\begin{proof}
Recall the classification of simple $A_{\mathfrak{q}}$-modules given in Theorem \ref{simples}.

(i) Let $V_0$ be the simple $A_{\mathfrak{q}}$-module defined in (\ref{V0}). 
Then
$$1 \stackrel{\textsc{(i)}}{\leq} \operatorname{pd}_{A_{\mathfrak{q}}}\left(V_0 \right) \stackrel{\textsc{(ii)}}{\leq}  \operatorname{pd}_{S_{\mathfrak{q}}}\left(S_{\mathfrak{q}}/\mathfrak{q} \right) \stackrel{\textsc{(iii)}}{=} \operatorname{ht}_S(\mathfrak{q}) \stackrel{\textsc{(iv)}}{=} 1.$$
Indeed, (\textsc{i}) holds since $V_0$ is clearly not a direct summand of a free $A_{\mathfrak{q}}$-module; (\textsc{ii}) holds by Lemma \ref{proj res}; (\textsc{iii}) holds by Lemma \ref{Sq regular}; and (\textsc{iv}) holds by Theorem \ref{height}.1.

(ii) Fix $1 \leq \ell \leq n$, and let $V_{\ell}$ be the vertex simple $A_{\mathfrak{q}}$-module defined in (\ref{Vell}).
Set $a := a_{\ell}$.
We claim that $V_{\ell}$ has minimal projective resolution
\begin{equation} \label{blah}
0 \to A_{\mathfrak{q}}e_{\operatorname{h}(a)} \stackrel{\cdot a}{\longrightarrow} A_{\mathfrak{q}} e_{\operatorname{t}(a)} \stackrel{ \cdot 1}{\longrightarrow} k e_{\operatorname{t}(a)} = V_{\ell} \to 0.
\end{equation}

(ii.a) We first claim that $\cdot a$ is injective.
Suppose $b \in A_{\mathfrak{q}}e_{\operatorname{h}(a)}$ is nonzero.
Then $\bar{\tau}_{\psi}(ba) = \overbar{b} \cdot \overbar{a} \not = 0$ since $B$ is an integral domain.
Whence $ba \not = 0$ since $\bar{\tau}_{\psi}$ is injective.
Therefore $\cdot a$ is injective.

(ii.b) We now claim that $\operatorname{im}(\cdot a) = \ker(\cdot 1)$.
Since $aV = 0$, we have $\operatorname{im}(\cdot a) \subseteq \ker(\cdot 1)$.
To show the reverse inclusion, suppose $g \in \ker(\cdot 1)$; then $gV = 0$. 
We may write
$$g = \sum_j s_j^{-1}p_j,$$ 
where each $p_j \in Ae_{\operatorname{t}(a)}$ is a path and $s_j \in S \setminus \mathfrak{q}$.
If $p_j$ is nontrivial, then $p_j = p'_ja$ for some path $p'_j$ since $\deg^+ \operatorname{t}(a) = 1$.
Whence
$$p_jV_{\ell} = p'_ja V_{\ell} = 0.$$
It thus suffices to suppose that each $p_j$ is trivial, $p_j = e_{\operatorname{t}(a)}$.
But then $g = s^{-1} e_{\operatorname{t}(a)}$ for some $s \in S \setminus \mathfrak{q}$.
Therefore 
$$e_{\operatorname{t}(a)}V_{\ell} = sgV_{\ell} = 0,$$
a contradiction.

(ii.c) Finally, (\ref{blah}) is minimal since $V_{\ell}$ is clearly not a direct summand of a free $A_{\mathfrak{q}}$-module.
\end{proof}

Lemmas \ref{height lemma}, \ref{t}, and Proposition \ref{S regular local} are not specific to homotopy algebras.

\begin{Lemma} \label{height lemma}
Suppose $S$ is a depiction of $R$.
Let $\mathfrak{p} \in \operatorname{Spec}R$ and $\mathfrak{q} \in \iota_{S/R}^{-1}(\mathfrak{p})$.
If $\operatorname{ht}_S(\mathfrak{q}) = 1$, then $\operatorname{ght}_R(\mathfrak{p}) = 1$.
\end{Lemma}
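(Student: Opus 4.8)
The plan is to establish the two inequalities $\operatorname{ght}_R(\mathfrak{p}) \leq 1$ and $\operatorname{ght}_R(\mathfrak{p}) \geq 1$ separately. The upper bound is immediate from the definition of geometric height: since $S$ is a depiction of $R$ and $\mathfrak{q} \in \iota_{S/R}^{-1}(\mathfrak{p})$ with $\operatorname{ht}_S(\mathfrak{q}) = 1$, the value $1$ occurs in the set over which $\operatorname{ght}_R(\mathfrak{p})$ is the minimum, so $\operatorname{ght}_R(\mathfrak{p}) \leq 1$.

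For the lower bound I would argue contrapositively, reducing it to the statement $\mathfrak{p} \neq 0$. If $\operatorname{ght}_R(\mathfrak{p}) = 0$, then there is a depiction $S'$ of $R$ and a prime $\mathfrak{q}' \in \iota_{S'/R}^{-1}(\mathfrak{p})$ with $\operatorname{ht}_{S'}(\mathfrak{q}') = 0$; as $S'$ is an integral domain this forces $\mathfrak{q}' = 0$, hence $\mathfrak{p} = \mathfrak{q}' \cap R = 0$. Thus it suffices to show that $\mathfrak{p} = \mathfrak{q} \cap R$ is nonzero. To that end I would first observe that a depiction has the same fraction field as the ring it depicts: by definition of a depiction there is some $\mathfrak{n} \in \operatorname{Max} S$ with $R_{\mathfrak{n} \cap R} = S_{\mathfrak{n}}$, and then $S \subseteq S_{\mathfrak{n}} = R_{\mathfrak{n} \cap R} \subseteq \operatorname{Frac} R$ yields $\operatorname{Frac} S \subseteq \operatorname{Frac} R$, while $R \subseteq S$ gives the reverse inclusion; hence $\operatorname{Frac} R = \operatorname{Frac} S$.

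Finally I would derive the contradiction assuming $\mathfrak{q} \cap R = 0$: then $R \setminus \{0\} \subseteq S \setminus \mathfrak{q}$, so $\operatorname{Frac} R \subseteq S_{\mathfrak{q}} \subseteq \operatorname{Frac} S$, and by the previous step $S_{\mathfrak{q}} = \operatorname{Frac} S$, whence $\mathfrak{q} S_{\mathfrak{q}} = 0$ and $\mathfrak{q}$ is a minimal prime of $S$, contradicting $\operatorname{ht}_S(\mathfrak{q}) = 1$. Therefore $\mathfrak{p} \neq 0$, so $\operatorname{ght}_R(\mathfrak{p}) \geq 1$, and combined with the upper bound we conclude $\operatorname{ght}_R(\mathfrak{p}) = 1$. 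The one point requiring care is the reduction in the second paragraph, which uses that depictions are integral domains (consistent with the standing hypotheses and with the depictions occurring in this paper, e.g. the cycle algebra in Lemma \ref{over}.3); if one prefers not to assume this, the characterization of geometric-height-zero primes can instead be quoted from \cite{B5}. Everything else is routine localization bookkeeping.
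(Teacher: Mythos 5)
Your proof is correct and follows the same basic strategy as the paper's: establish $\operatorname{ght}_R(\mathfrak{p}) \leq 1$ directly from the definition, and rule out $\operatorname{ght}_R(\mathfrak{p}) = 0$ by noting that height-zero primes in a depicting domain are zero. However, you supply a step the paper leaves implicit. The paper's proof concludes with the line ``$\mathfrak{q}' \cap R = 0 \neq \mathfrak{q} \cap R = \mathfrak{p}$, a contradiction,'' which silently asserts $\mathfrak{p} \neq 0$; nothing in the paper's proof justifies why $\mathfrak{q} \cap R$ cannot vanish when $\operatorname{ht}_S(\mathfrak{q}) = 1$. You close this gap cleanly: from the depiction axiom $R_{\mathfrak{n}\cap R} = S_{\mathfrak{n}}$ for some $\mathfrak{n} \in \operatorname{Max}S$ you deduce $\operatorname{Frac}R = \operatorname{Frac}S$, and then $\mathfrak{q}\cap R = 0$ would force $R \setminus \{0\} \subseteq S\setminus\mathfrak{q}$, hence $S_{\mathfrak{q}} = \operatorname{Frac}S$, hence $\mathfrak{q} = 0$ and $\operatorname{ht}_S(\mathfrak{q}) = 0$, contradicting the hypothesis. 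This is exactly the missing justification. Your cautionary remark about depictions being domains is resolved by the standing hypotheses of the paper (any depiction $S'$ of $R$ satisfies $S' \subseteq S'_{\mathfrak{n}} = R_{\mathfrak{n}\cap R} \subseteq \operatorname{Frac}R$, so it is a subring of a field), and both your proof and the paper's use this fact in the same place. In short: same route, but yours is complete where the paper's is terse to the point of omission.
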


\begin{proof}
Assume to the contrary that $\operatorname{ght}_R(\mathfrak{p}) = 0$.
Then there is a depiction $S'$ of $R$ and a prime ideal $\mathfrak{q}' \in \iota_{S'/R}^{-1}(\mathfrak{p})$ such that $\operatorname{ht}_{S'}(\mathfrak{q}') = 0$.
Whence $\mathfrak{q}' = 0$ since $S'$ is an integral domain. 
But then $\mathfrak{q}' \cap R = 0 \not = \mathfrak{q} \cap R = \mathfrak{p}$, a contradiction. 
Therefore
$$\operatorname{ht}_S(\mathfrak{q}) = 1 \leq \operatorname{ght}_R(\mathfrak{p}) \leq \operatorname{ht}_S(\mathfrak{q}).$$
\end{proof}

Recall that an ideal $I$ of an integral domain $S$ is a projective $S$-module if and only if $I$ is invertible, i.e., there is a fractional ideal $J$ such that $IJ = S$.
In this case, $I$ is a finitely generated rank one $S$-module \cite[Theorem 19.10]{C}.

\begin{Proposition} \label{S regular local}
Let $B$ be an integral domain, and let $A = [ A^{ij} ] \subset M_d(B)$ be a tiled matrix ring with cycle algebra $S$.
Set $Q_0 := \{ 1, \ldots, d \}$.
Suppose that
\begin{enumerate} 
 \item $S$ is a regular local ring.
 \item There is some $i \in Q_0$ such that 
   \begin{enumerate}
     \item $A^i = S$;
     \item for each $j \in Q_0$, $A^{ij}$ is an invertible ideal of $S$; and
     \item for each $j \in Q_0$, either $(e_iAe_j)^{\circ} \not = \emptyset$, or there is some $\ell \in Q_0$ and $b \in e_jAe_{\ell}$ satisfying
     $$e_jA = bA \oplus ke_j \ \ \ \text{ and } \ \ \ (e_iAe_{\ell})^{\circ} \not = \emptyset.$$
     \end{enumerate}
\end{enumerate}
Then
\begin{equation*} \label{gldimA}
\operatorname{gldim}A \leq \operatorname{dim}S.
\end{equation*}
\end{Proposition}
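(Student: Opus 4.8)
The plan is to bound $\operatorname{gldim} A$ by computing, for each vertex $j\in Q_0$, the projective dimension of every simple $A$-module supported at $j$, and to show each is at most $\operatorname{dim} S$. Since $A$ is a finite-dimensional-over-its-corner tiled matrix ring with a complete set of orthogonal idempotents, $\operatorname{gldim} A = \sup_j \operatorname{pd}_A(W)$ over simple modules $W$, and it suffices to produce, for each such $W$, a projective resolution of length $\le \operatorname{dim} S$. The distinguished vertex $i$ with $A^i = S$ plays the role of a ``good corner'': by hypothesis (2b) each $A^{ij} = e_iAe_j$ is an invertible ideal of $S$, hence a finitely generated rank-one projective $S$-module, and therefore (since $S$ is regular local, hence a PID-like local ring where finitely generated projectives are free) each $e_iAe_j \cong S$ as an $S$-module; consequently $e_iA = \bigoplus_j e_iAe_j$ is a free $S$-module, and likewise $Ae_i$ is free over $S = e_iAe_i$.

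The key device is the functor $Ae_i \otimes_S - $ from $S$-modules to left $A$-modules, which is exact because $Ae_i$ is $S$-free (hence flat), and which sends $S^{\oplus n}$ to the projective $A$-module $(Ae_i)^{\oplus n}$. So starting from a minimal free resolution of $S/\mathfrak{m}_S$ over the regular local ring $S$ — which has length exactly $\operatorname{dim} S$ — and applying $Ae_i\otimes_S-$, I obtain a projective $A$-resolution, of length $\operatorname{dim} S$, of the module $Ae_i\otimes_S(S/\mathfrak{m}_S)$. This module is $Ae_i/Ae_i\mathfrak{m}_S = Ae_i/A\mathfrak{m}_S e_i$, which is the ``big'' simple-at-$i$-type module (compare Lemma~\ref{proj res} and the module $V_0$ there); by the argument of Theorem~\ref{simples}/Proposition~\ref{cycle hom} the genuinely new simple modules are this one together with finitely many vertex simples $ke_j$ at vertices $j$ outside the good corner. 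It then remains to handle those vertex simples.

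For a vertex simple $ke_j$ with $(e_iAe_j)^\circ \neq \emptyset$, vertex invertibility makes $e_j$ equivalent to $e_i$ inside $A$ (the corner rings agree up to the Morita-type identification induced by $p,p^*$), so $ke_j$ reduces to the already-resolved case and has the same projective dimension $\le \operatorname{dim} S$. For a vertex $j$ of the remaining type, hypothesis (2c) supplies $\ell$ and $b\in e_jAe_\ell$ with $e_jA = bA\oplus ke_j$ and $(e_iAe_\ell)^\circ\neq\emptyset$; dually this gives a short exact sequence $0\to Ae_\ell \xrightarrow{\cdot b} Ae_j \to ke_j\to 0$ of left $A$-modules (injectivity of $\cdot b$ because $B$ is a domain and the matrix embedding is injective, exactness in the middle from the direct-sum decomposition), exactly as in part (ii) of the proof of Proposition~\ref{cycle hom}. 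Since $\ell$ is of the good type, $Ae_\ell$ is projective, so this sequence is already a length-one projective resolution, giving $\operatorname{pd}_A(ke_j) = 1 \le \operatorname{dim} S$ (using $\operatorname{dim} S \ge 1$, as $S$ is a regular \emph{local} domain that is not a field in the situation of interest; if $\operatorname{dim} S = 0$ then $S$ is a field and $A$ is semisimple, so the bound is trivial). Taking the supremum over all vertices yields $\operatorname{gldim} A \le \operatorname{dim} S$.

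The main obstacle I anticipate is the bookkeeping around the vertex simples of the second type: one must be careful that $e_jA = bA\oplus ke_j$ really does dualize to the stated short exact sequence of \emph{left} modules over $A$ (not just over a corner ring), and that $Ae_\ell$ is genuinely $A$-projective — which it is, as a direct summand $Ae_\ell$ of the free module $A$ — rather than merely $S$-projective. A secondary point requiring care is the passage ``finitely generated projective over regular local $S$ $\Rightarrow$ free'', and the identification $Ae_i\otimes_S(S/\mathfrak m_S)\cong$ the relevant simple $A$-module; both are standard but should be stated cleanly, since the exactness of $Ae_i\otimes_S-$ is the linchpin of the whole argument.
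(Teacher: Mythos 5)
Your proposal takes a genuinely different route from the paper — bounding $\operatorname{pd}_A$ of simple modules only — but this strategy has several real gaps in the generality in which the proposition is stated.

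First, the reduction $\operatorname{gldim}A = \sup_W \operatorname{pd}_A(W)$ over \emph{simple} left modules $W$ is not valid for an arbitrary ring. Auslander's theorem reduces global dimension to the supremum over \emph{cyclic} modules, and descending further to simples requires a finiteness hypothesis (left noetherian, or a suitable semiperfect/filtration condition). The tiled matrix ring $A$ in the proposition may be nonnoetherian — indeed, nonnoetherianity is the whole point of the paper — and nothing in the hypotheses guarantees that every $A$-module admits a filtration by simples. Since your argument produces short projective resolutions only for a small list of named modules, you have not bounded $\operatorname{pd}_A(V)$ for an arbitrary $V$, which is what the conclusion requires. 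The paper's proof sidesteps this entirely by working with an arbitrary left $A$-module $V$ from the start.

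Second, your linchpin is that $Ae_i$ is a free (hence flat) right $S$-module, so that $Ae_i\otimes_S -$ is exact. But $Ae_i = \bigoplus_j e_jAe_i$, and $e_jAe_i = A^{ji}$. Hypothesis (2b) controls the $i$th \emph{row} entries $A^{ij} = e_iAe_j$, not the $i$th \emph{column} entries $A^{ji}$; the latter need not be finitely generated, projective, or even flat as $S$-modules. (In the dimer application such freeness is supplied separately by Lemma~\ref{free}, but it is not a consequence of the proposition's hypotheses.) The paper instead uses that $e_iA$ is flat as a \emph{right $A$-module} — trivially, since it is a direct summand of $A_A$ — and applies $e_iA\otimes_A -$ to a chosen projective $A$-resolution of $V$, obtaining a complex of free $S$-modules by (2a)--(2b), after which regularity of $S$ forces the $n$th syzygy $e_iA\otimes_A\ker\delta_{n-1}$ to be $S$-free; this is then lifted back to projectivity of $\ker\delta_{n-1}=Ae_i\ker\delta_{n-1}$ using (2c). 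That is the actual use of (2c): to arrange $\ker\delta_\alpha = Ae_i\ker\delta_\alpha$, not to produce short resolutions of vertex simples.

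Third, your appeal to Theorem~\ref{simples}/Proposition~\ref{cycle hom} for the classification of simple $A$-modules (``$V_0$ together with finitely many vertex simples'') imports facts proved only for the cyclic localization $A_{\mathfrak{q}}$ of a homotopy dimer algebra; the proposition under discussion is stated for an arbitrary tiled matrix ring satisfying (1) and (2) and gives no such classification. Relatedly, for a vertex $j$ with $(e_iAe_j)^\circ\neq\emptyset$, the vertex simple $ke_j$ is typically \emph{not} an $A$-module (the vertex-invertible $p\in e_iAe_j$ does not annihilate $e_j$), so the sentence ``$ke_j$ reduces to the already-resolved case'' does not describe a well-defined module. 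In short, the approach through simples both assumes more than the hypotheses provide and misses the mechanism that actually makes the bound work.
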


\begin{proof}
Suppose the hypotheses hold, and set $n := \operatorname{dim}S$.
Let $V$ be a left $A$-module.
We claim that 
$$\operatorname{pd}_A(V) \leq n.$$
It suffices to show that there is a projective resolution $P_{\bullet}$ of $V$,
$$\cdots \longrightarrow P_2 \stackrel{\delta_2}{\longrightarrow} P_1 \stackrel{\delta_1}{\longrightarrow} P_0 \stackrel{\delta_0}{\longrightarrow} V \to 0,$$
for which $\operatorname{ker}\delta_{n-1}$ is a projective $A$-module \cite[Proposition 8.6.iv]{R}. 

(i) We first claim that there is a projective resolution $P_{\bullet}$ of $V$ so that for each $\alpha \geq 1$,
\begin{equation} \label{ker}
\operatorname{ker} \delta_{\alpha} = Ae_i \ker \delta_{\alpha}.
\end{equation}

Indeed, fix $j \in Q_0$, and recall assumption (2.c).
If $p \in (e_iAe_j)^{\circ}$, then 
$$e_j \ker \delta_{\alpha} = p^*p \ker \delta_{\alpha} = p^* e_i p \ker \delta_{\alpha} \subseteq Ae_i \ker \delta_{\alpha}.$$
Otherwise there is some $\ell \in Q_0$ and $b \in e_jAe_{\ell}$ such that $e_jA = bA \oplus k e_j$ and $(e_iAe_{\ell})^{\circ} \not = \emptyset$. 
Let $p \in (e_iAe_{\ell})^{\circ}$.
Since the sum $e_j A = bA \oplus k e_j$ is direct, we may choose $P_{\bullet}$ so that for each $\alpha \geq 1$,
$$\delta_{\alpha} \mid_{e_jP_{\alpha}} = b \cdot \delta_{\alpha} \mid_{e_{\ell}P_{\alpha}}.$$
Furthermore, for nonzero $q \in e_{\ell}A$, $bq \not = 0$ since $B$ is an integral domain.
Thus 
$$e_j\ker \delta_{\alpha} = b \ker \delta_{\alpha}.$$
Whence
$$e_j\ker \delta_{\alpha} = b \ker \delta_{\alpha} = bp^*e_ip \ker \delta_{\alpha} \subseteq Ae_i \ker \delta_{\alpha}.$$
Therefore in either case, 
$$e_j\ker \delta_{\alpha} \subseteq Ae_i \ker \delta_{\alpha}.$$

(ii) Fix a projective resolution $P_{\bullet}$ of $V$ satisfying (\ref{ker}).
We claim that the left $A$-module $Ae_i \operatorname{ker}\delta_{n-1}$ is projective.

The right $A$-module $e_iA$ is projective, hence flat. 
Thus, setting $\otimes := \otimes_A$, the complex of $S$-modules
\begin{equation} \label{sequence}
\cdots \longrightarrow e_iA \otimes P_2 \stackrel{1 \otimes \delta_2}{\longrightarrow} e_iA \otimes P_1 \stackrel{1 \otimes \delta_1}{\longrightarrow} e_iA \otimes P_0 \stackrel{1 \otimes \delta_0}{\longrightarrow} e_iA \otimes V \to 0
\end{equation}
is exact.
Each term $e_iA \otimes P_{\ell}$ is a free $S$-module since
\begin{align*}
e_iA \otimes P_{\ell} \cong e_iA \otimes \bigoplus_j (Ae_j)^{\oplus n_j} \cong
\bigoplus_j (e_iA \otimes Ae_j )^{\oplus n_j} \notag\\
\cong \bigoplus_j (e_iAe_j)^{\oplus n_j} \cong \bigoplus_j (A^{ij})^{\oplus n_j} \stackrel{\textsc{(i)}}{\cong} \bigoplus_j S^{\oplus n_j},
\end{align*}
where (\textsc{i}) holds by assumption (2.b).
Furthermore, $e_iA \otimes V$ is an $S$-module since $e_iAe_i \cong S$ by assumption (2.a).
Therefore (\ref{sequence}) is a free resolution of an $S$-module.
But $\operatorname{gldim}S = \operatorname{dim}S = n$ by assumption (1).
Therefore the $n$th syzygy module of (\ref{sequence}) is a free $S$-module,
$$\operatorname{ker}(1 \otimes \delta_{n-1}) \cong S^{\oplus m}.$$

Since $e_iA$ is a flat right $A$-module, the sequence
$$0 \to e_iA \otimes \operatorname{ker}\delta_{n-1} \longrightarrow e_iA \otimes P_{n-1} \stackrel{1 \otimes \delta_{n-1}}{\longrightarrow} e_iA \otimes P_{n-2}$$
is exact.
Whence
$$e_iA \otimes \operatorname{ker}\delta_{n-1} \cong \operatorname{ker}(1 \otimes \delta_{n-1}) \cong S^{\oplus m}.$$
Therefore
$$Ae_i \operatorname{ker}\delta_{n-1} \cong Ae_iA \otimes \operatorname{ker} \delta_{n-1} \cong Ae_i S^{\oplus m} \stackrel{\textsc{(i)}}{\cong} A(e_iAe_i)^{\oplus m} \cong (Ae_i)^{\oplus m},$$
where (\textsc{i}) holds by assumption (2.a), proving our claim. 

(iii) Finally, $\operatorname{ker}\delta_{n-1}$ is a projective left $A$-module by Claims (i) and (ii).
Therefore $_AV$ has projective dimension at most $n$.
\end{proof}

\begin{Lemma} \label{t}
Suppose $S$ is a noetherian integral domain and a $k$-algebra, and $R$ is a subalgebra of $S$.
Let $\mathfrak{p} \in \operatorname{Spec}R$.
If $\mathfrak{t} \in \operatorname{Spec}(SR_{\mathfrak{p}})$ is a minimal prime over $\mathfrak{p}R_{\mathfrak{p}}$, then the ideal $\mathfrak{t} \cap S \in \operatorname{Spec}S$ is a minimal prime over $\mathfrak{p}$.
\end{Lemma}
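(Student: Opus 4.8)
The plan is to reduce the statement to the standard correspondence between primes of the localization $SR_{\mathfrak{p}} = W^{-1}S$, where $W := R \setminus \mathfrak{p}$, and primes $\mathfrak{q}$ of $S$ with $\mathfrak{q} \cap W = \emptyset$, i.e. with $\mathfrak{q} \cap R \subseteq \mathfrak{p}$. Under this order-preserving bijection one has $\mathfrak{t} \leftrightarrow \mathfrak{t} \cap S$ and $\mathfrak{t} = (\mathfrak{t}\cap S)\,SR_{\mathfrak{p}}$, and ``$\mathfrak{p}R_{\mathfrak{p}}$'' as an ideal of $SR_{\mathfrak{p}}$ means its extension, which equals $\mathfrak{p}\,SR_{\mathfrak{p}}$ since $R_{\mathfrak{p}} \subseteq SR_{\mathfrak{p}}$. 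Throughout, write $\mathfrak{q} := \mathfrak{t}\cap S$; being the contraction of a prime, it is prime, and ``minimal prime over $\mathfrak{p}$'' will mean minimal among primes of $S$ containing $\mathfrak{p}$, equivalently minimal over the extended ideal $\mathfrak{p}S$.

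First I would record two elementary facts. (a) $\mathfrak{p} \subseteq \mathfrak{q}$: since $\mathfrak{p} \subseteq S$ and $\mathfrak{p} \subseteq \mathfrak{p}R_{\mathfrak{p}} \subseteq \mathfrak{t}$, we get $\mathfrak{p} \subseteq \mathfrak{t} \cap S = \mathfrak{q}$. (b) $\mathfrak{q} \cap R \subseteq \mathfrak{p}$: because $R_{\mathfrak{p}} \subseteq SR_{\mathfrak{p}}$, the contraction $\mathfrak{t} \cap R_{\mathfrak{p}}$ is a prime of the local ring $R_{\mathfrak{p}}$, hence $\mathfrak{t} \cap R_{\mathfrak{p}} \subseteq \mathfrak{p}R_{\mathfrak{p}}$; contracting to $R$ gives $\mathfrak{t} \cap R \subseteq \mathfrak{p}R_{\mathfrak{p}} \cap R = \mathfrak{p}$, and $\mathfrak{q} \cap R = \mathfrak{t} \cap R$.

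The key step is then to take an arbitrary prime $\mathfrak{q}'$ of $S$ with $\mathfrak{p}S \subseteq \mathfrak{q}' \subseteq \mathfrak{q}$ and show $\mathfrak{q}' = \mathfrak{q}$. From (b), $\mathfrak{q}' \cap R \subseteq \mathfrak{q} \cap R \subseteq \mathfrak{p}$, so $\mathfrak{q}' \cap W = (\mathfrak{q}' \cap R) \setminus \mathfrak{p} = \emptyset$; hence $\mathfrak{q}'$ extends to a prime $\mathfrak{t}' := \mathfrak{q}'\,SR_{\mathfrak{p}}$ of $SR_{\mathfrak{p}}$ with $\mathfrak{t}' \cap S = \mathfrak{q}'$. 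Since $\mathfrak{p}S \subseteq \mathfrak{q}'$ we have $\mathfrak{p}\,SR_{\mathfrak{p}} = \mathfrak{p}R_{\mathfrak{p}}\cdot SR_{\mathfrak{p}} \subseteq \mathfrak{t}'$, and since extension is order-preserving and $\mathfrak{t} = \mathfrak{q}\,SR_{\mathfrak{p}}$, we have $\mathfrak{t}' \subseteq \mathfrak{t}$. Minimality of $\mathfrak{t}$ over $\mathfrak{p}R_{\mathfrak{p}}$ forces $\mathfrak{t}' = \mathfrak{t}$, whence $\mathfrak{q}' = \mathfrak{t}' \cap S = \mathfrak{t} \cap S = \mathfrak{q}$. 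Together with (a), this shows $\mathfrak{q} = \mathfrak{t} \cap S$ is a minimal prime of $S$ over $\mathfrak{p}$.

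The only points needing care are bookkeeping ones rather than genuine obstacles: identifying the ideal denoted $\mathfrak{p}R_{\mathfrak{p}}$ inside $SR_{\mathfrak{p}}$ with $\mathfrak{p}\,SR_{\mathfrak{p}}$, and observing (via fact (b)) that any prime of $S$ sandwiched between $\mathfrak{p}S$ and $\mathfrak{q}$ automatically avoids $R \setminus \mathfrak{p}$ and so survives the localization — this is what lets the minimality be transported back along $(-)\cap S$. Note that noetherianity of $S$ is not actually used here, since minimal primes over an ideal exist unconditionally; it is needed only in the ambient applications of this lemma.
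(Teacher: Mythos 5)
Your proof is correct and follows essentially the same line as the paper's: both arguments hinge on the observation that $\mathfrak{t}\cap R\subseteq\mathfrak{p}$ (your fact (b)), so that any prime of $S$ squeezed between $\mathfrak{p}$ and $\mathfrak{t}\cap S$ misses $R\setminus\mathfrak{p}$ and therefore survives the localization $SR_{\mathfrak{p}}=(R\setminus\mathfrak{p})^{-1}S$, letting minimality be transported across the extension/contraction correspondence. You run the argument directly and invoke the standard localization correspondence, whereas the paper argues by contraposition and reverifies the needed pieces of that correspondence by hand; the mathematical content is the same, and your aside that noetherianity (and integrality) of $S$ is not actually used in this lemma is also correct.
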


\begin{proof}
Suppose that $\mathfrak{t} \cap S$ is not a minimal prime over $\mathfrak{p}$.
We want to show that $\mathfrak{t}$ is not a minimal prime over $\mathfrak{p}R_{\mathfrak{p}}$.
Since $\mathfrak{t} \cap S$ is not minimal, there is some $\mathfrak{q} \in \operatorname{Spec}S$, minimal over $\mathfrak{p}$, such that
\begin{equation} \label{searching}
\mathfrak{p} \subseteq \mathfrak{q} \subset \mathfrak{t} \cap S.
\end{equation}

(i) We claim that $\mathfrak{q} \cap R = \mathfrak{p}$.
Assume to the contrary that there is some $a \in (\mathfrak{t} \cap R) \setminus \mathfrak{p}$.
Then $a^{-1} \in R_{\mathfrak{p}}$.
Whence $1 = aa^{-1} \in \mathfrak{t}SR_{\mathfrak{p}} = \mathfrak{t}$, contrary to the fact that $\mathfrak{t}$ is prime.
Therefore 
\begin{equation} \label{searching2}
\mathfrak{t} \cap R \subseteq \mathfrak{p}.
\end{equation}
Consequently,
$$\mathfrak{p} \subseteq \mathfrak{q} \cap R \stackrel{\textsc{(i)}}{\subseteq} \mathfrak{t} \cap R \stackrel{\textsc{(ii)}}{\subseteq} \mathfrak{p},$$
where (\textsc{i}) holds by (\ref{searching}) and (\textsc{ii}) holds by (\ref{searching2}).
Thus $\mathfrak{q} \cap R = \mathfrak{p}$, proving our claim.

(ii) Now fix $a \in (\mathfrak{t} \cap S) \setminus \mathfrak{q}$, and assume to the contrary that $a \in \mathfrak{q}R_{\mathfrak{p}}$.
Then there is some $b \in \mathfrak{q}$ and $c \in R \setminus \mathfrak{p}$ such that $a = bc^{-1}$.
In particular, $ac = b \in \mathfrak{q}$.
Whence $c \in \mathfrak{q}$ since $c \in R \subseteq S$ and $\mathfrak{q}$ is prime.
Thus 
$$c \in \mathfrak{q} \cap R \stackrel{\textsc{(i)}}{=} \mathfrak{p},$$
where (\textsc{i}) holds by Claim (i).
But $c \not \in \mathfrak{p}$, a contradiction.
Whence $a \in \mathfrak{t} \setminus \mathfrak{q} R_{\mathfrak{p}}$. 
Thus
$$\mathfrak{p}R_{\mathfrak{p}} \subseteq \mathfrak{q}R_{\mathfrak{p}} \subset \mathfrak{t}.$$
Furthermore, $\mathfrak{q}R_{\mathfrak{p}}$ is a prime ideal of $SR_{\mathfrak{p}}$.
Therefore $\mathfrak{t}$ is a not a minimal prime over $\mathfrak{p}$.
\end{proof}

Again let $A$ be a nonnoetherian homotopy algebra satisfying assumptions (A) and (B).
Recall that the center and cycle algebra of $A_{\mathfrak{m}_0} := A \otimes_R R_{\mathfrak{m}_0}$ are isomorphic to $R_{\mathfrak{m}_0}$ and $SR_{\mathfrak{m}_0}$ respectively. 

\begin{Theorem} \label{big theorem 1}
$A_{\mathfrak{m}_0}$ is a noncommutative desingularization of its center.
Furthermore, for each $\mathfrak{t} \in \operatorname{Spec}(SR_{\mathfrak{m}_0})$ minimal over $\mathfrak{t} \cap R_{\mathfrak{m}_0}$,
$$\operatorname{gldim}A_{\mathfrak{t}} = \operatorname{dim}(SR_{\mathfrak{m}_0})_{\mathfrak{t}} = \operatorname{dim}S_{\mathfrak{t} \cap S}.$$
\end{Theorem}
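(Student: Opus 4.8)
The plan is to reduce every statement to the cyclic localizations $A_{\mathfrak{q}}$ of $A$ at primes $\mathfrak{q}\in\operatorname{Spec}S$ already analyzed, via base-change identifications parallel to the proof of Proposition \ref{ngd}. First I would show that for each $\mathfrak{t}\in\operatorname{Spec}(SR_{\mathfrak{m}_0})$, with $\mathfrak{q}:=\mathfrak{t}\cap S$, one has $(A_{\mathfrak{m}_0})_{\mathfrak{t}}\cong A_{\mathfrak{q}}$; the corresponding statement $(SR_{\mathfrak{m}_0})_{\mathfrak{t}}=S_{\mathfrak{q}}$ is Claim (i) of Proposition \ref{ngd}. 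The argument mirrors the footnote there: a corner ring $e_i(A_{\mathfrak{m}_0})_{\mathfrak{t}}e_i$ consists of elements $\tfrac{s_1}{r_1}(\tfrac{s_2}{r_2})^{-1}$ with $s_1,s_2\in A^i=\bar\tau_\psi(e_iAe_i)$, $r_1,r_2\in R\setminus\mathfrak{m}_0$ and $\tfrac{s_2}{r_2}\notin\mathfrak{t}$; since $\mathfrak{t}\cap R\subseteq\mathfrak{m}_0$ this forces $s_2\notin\mathfrak{t}$, hence $s_2\in A^i\setminus(\mathfrak{q}\cap A^i)$, and then $\tfrac{s_1}{r_1}(\tfrac{s_2}{r_2})^{-1}=\tfrac{s_1r_2}{s_2}\cdot\tfrac{1}{r_1}\in(A^i)_{\mathfrak{q}\cap A^i}R_{\mathfrak{m}_0}=(A^i)_{\mathfrak{q}\cap A^i}$, the last equality holding because $R\setminus\mathfrak{m}_0\subseteq A^i\setminus(\mathfrak{q}\cap A^i)$ when $\mathfrak{m}_0\subseteq\mathfrak{q}$ and $\mathfrak{q}=0$ otherwise by Theorem \ref{height}.3. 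Passing to the generated subalgebras of $M_{|Q_0|}(\operatorname{Frac}B)$ yields the isomorphism, so $\operatorname{gldim}(A_{\mathfrak{m}_0})_{\mathfrak{t}}=\operatorname{gldim}A_{\mathfrak{q}}$ and $\dim(SR_{\mathfrak{m}_0})_{\mathfrak{t}}=\dim S_{\mathfrak{q}}$. By Lemma \ref{t}, together with the fact that each $\mathfrak{q}_D$ with $\mathfrak{q}_D\cap R=\mathfrak{m}_0$ extends to a prime of $SR_{\mathfrak{m}_0}$ minimal over $\mathfrak{m}_0R_{\mathfrak{m}_0}$, the primes $\mathfrak{t}$ minimal over $\mathfrak{m}_0R_{\mathfrak{m}_0}$ correspond exactly to the primes $\mathfrak{q}_D$ of $S$ minimal over $\mathfrak{m}_0$ (Proposition \ref{prime2}).

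For cycle regularity, fix $\mathfrak{q}=\mathfrak{q}_D$ minimal over $\mathfrak{m}_0$. By Proposition \ref{cycle hom} every simple $A_{\mathfrak{q}}$-module has projective dimension $1=\operatorname{ht}_S(\mathfrak{q})=\dim S_{\mathfrak{q}}$, using Lemma \ref{Sq regular} and Theorem \ref{height}.1, so it remains to prove $\operatorname{gldim}A_{\mathfrak{q}}\le 1$. I would deduce this from Proposition \ref{S regular local} applied to the tiled matrix ring $A_{\mathfrak{q}}$ with cycle algebra $S_{\mathfrak{q}}$: hypothesis (1) is Lemma \ref{Sq regular}, and for (2) I choose a vertex $i$ with $e_i\epsilon_D\neq 0$ (such exists since $\epsilon_D\neq 0$, as $V_0\neq 0$ in Theorem \ref{simples}). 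Then $e_iA_{\mathfrak{q}}e_i=S_{\mathfrak{q}}$ and each $e_iA_{\mathfrak{q}}e_j$ is a cyclic free, hence invertible, $S_{\mathfrak{q}}$-module by Lemma \ref{free}, giving (2.a) and (2.b). For (2.c): if $e_j\epsilon_D\neq 0$, Lemmas \ref{green} and \ref{vertex invertible} produce a vertex-invertible element of $e_iA_{\mathfrak{q}}e_j$; if $e_j\epsilon_D=0$, then $j=\operatorname{t}(a)$ for a unique $a\in Q_1^{\operatorname{t}}$ with $x_D\mid\overline a$, the vertex $j$ has indegree and outdegree $1$, so $e_jA_{\mathfrak{q}}=\delta_aA_{\mathfrak{q}}\oplus ke_j$ with $\ell:=\operatorname{t}(\delta_a)$ of indegree $\geq 2$ by assumption (B), and Lemmas \ref{green}, \ref{vertex invertible} give a vertex-invertible element of $e_iA_{\mathfrak{q}}e_{\ell}$. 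Hence $\operatorname{gldim}A_{\mathfrak{q}}\le\dim S_{\mathfrak{q}}=1$, so $\operatorname{gldim}A_{\mathfrak{q}}=\operatorname{pd}_{A_{\mathfrak{q}}}(V)=\dim S_{\mathfrak{q}}$ for every simple $V$; combined with the first step this is exactly cycle regularity of $A_{\mathfrak{m}_0}$.

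For the Morita statement I would identify $A_{\mathfrak{m}_0}\otimes_{R_{\mathfrak{m}_0}}\operatorname{Frac}(R_{\mathfrak{m}_0})=A\otimes_R\operatorname{Frac}R$ with a full matrix ring: since $S$ depicts $R$ (Lemma \ref{over}.3) there is a maximal ideal $\mathfrak{n}$ of $S$ with $S_{\mathfrak{n}}=R_{\mathfrak{n}\cap R}$, whence $\operatorname{Frac}R=\operatorname{Frac}S$; over this field every arrow of $Q$ is vertex invertible, so $A\otimes_R\operatorname{Frac}R$ is a tiled matrix ring over $\operatorname{Frac}R$ with all blocks nonzero, hence isomorphic to $M_{|Q_0|}(\operatorname{Frac}R)$, which is Morita equivalent to $\operatorname{Frac}R$. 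With cycle regularity, $A_{\mathfrak{m}_0}$ is then a noncommutative desingularization of its center. For the height formula, given $\mathfrak{t}\in\operatorname{Spec}(SR_{\mathfrak{m}_0})$ minimal over $\mathfrak{t}\cap R_{\mathfrak{m}_0}$, the first step gives $\operatorname{gldim}A_{\mathfrak{t}}=\operatorname{gldim}A_{\mathfrak{q}}$ and $\dim(SR_{\mathfrak{m}_0})_{\mathfrak{t}}=\dim S_{\mathfrak{q}}$ with $\mathfrak{q}=\mathfrak{t}\cap S$, and $\mathfrak{q}$ is minimal over $\mathfrak{q}\cap R$ in $S$ by Lemma \ref{t}. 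If $\mathfrak{q}\cap R=\mathfrak{m}_0$ then $\mathfrak{q}=\mathfrak{q}_D$ and the previous paragraph gives $\operatorname{gldim}A_{\mathfrak{q}}=\dim S_{\mathfrak{q}}=1$; if $\mathfrak{q}\cap R\neq\mathfrak{m}_0$, then $S_{\mathfrak{q}}=R_{\mathfrak{q}\cap R}$ and $A_{\mathfrak{q}}=A\otimes_RR_{\mathfrak{q}\cap R}$ is noetherian and homologically homogeneous by \cite{B3}, so $\operatorname{gldim}A_{\mathfrak{q}}=\dim Z(A_{\mathfrak{q}})=\dim S_{\mathfrak{q}}$.

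I expect the main obstacle to be the verification of hypothesis (2) of Proposition \ref{S regular local} for $A_{\mathfrak{q}}$ — in particular treating the indegree-$1$ vertices $j$ lying over $D$ through the decomposition $e_jA_{\mathfrak{q}}=\delta_aA_{\mathfrak{q}}\oplus ke_j$ and locating the required vertex-invertible paths — since this is where assumptions (A), (B) and Lemmas \ref{over}.4, \ref{green}, \ref{vertex invertible}, \ref{free} must all be combined; by contrast the base-change bookkeeping and the identification of minimal primes are routine provided one tracks contractions carefully.
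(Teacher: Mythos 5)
Your handling of the cycle-regularity statement matches the paper's argument closely: both proofs apply Proposition \ref{S regular local} with hypothesis (1) supplied by Lemma \ref{Sq regular} and hypotheses (2.a)--(2.c) supplied by Lemmas \ref{over}.4, \ref{free}, \ref{green}, \ref{vertex invertible}, then combine Proposition \ref{cycle hom}, Theorem \ref{height}.3, and Lemma \ref{height lemma} to get the numerical identities. Your unpacking of (2.c) at an indegree-one vertex $j$ via the decomposition $e_jA_{\mathfrak q}=\delta_aA_{\mathfrak q}\oplus ke_j$ with $\ell=\operatorname{t}(\delta_a)$ is more explicit than the paper's terse ``(2.c) holds by Lemma \ref{vertex invertible}'', and it is correct (including the uniqueness of $a$ and the use of assumption (B) to guarantee $\operatorname{deg}^+\ell\ge2$). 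The explicit reduction $(A_{\mathfrak m_0})_{\mathfrak t}\cong A_{\mathfrak q}$ is sound but routine, as you say.

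Where you diverge is in the last two claims. For the Morita statement, the paper cites \cite[Corollary 2.14.1]{B3}, whereas you argue directly that $A\otimes_R\operatorname{Frac}R\cong M_{|Q_0|}(\operatorname{Frac}R)$. This direct computation is in fact what the paper does, but in part (iii) of its proof for the prime $\mathfrak q=0$ rather than for part (ii); so your Morita step is a reorganization of the paper's material rather than a genuinely new route, and it is valid (modulo the standard fact that a connected tiled matrix ring with all arrows vertex-invertible over a field is a full matrix ring). For the final global-dimension identity, the paper case-splits (via Theorem \ref{height}.2) into $\mathfrak q=\mathfrak q_D$ (citing Claim (i)) and $\mathfrak q=0$, while you case-split on whether $\mathfrak q\cap R=\mathfrak m_0$ and handle the complementary case by asserting $S_{\mathfrak q}=R_{\mathfrak q\cap R}$ together with noetherianity and homological homogeneity from \cite{B3}. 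There is a gap here: the depiction condition in Definition \ref{def dep} is phrased for maximal ideals $\mathfrak n$ of $S$, and the cited \cite{B3} results concern central localizations at closed points of $\operatorname{Spec}R$ other than $\mathfrak m_0$, so the equality $S_{\mathfrak q}=R_{\mathfrak q\cap R}$ (and hence the identification of cyclic with central localization) for a non-maximal $\mathfrak q$ with $\mathfrak q\cap R\neq\mathfrak m_0$ is not established as stated. You would need an additional step showing that such a $\mathfrak q\cap R$ lies under a maximal ideal $\mathfrak n\cap R\neq\mathfrak m_0$ of $R$ and that the relevant localization inherits the noetherian identification from the closed point, before the argument closes.
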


\begin{proof}
By Lemma \ref{t} (with $\mathfrak{p} = \mathfrak{m}_0$), it suffices to consider prime ideals $\mathfrak{q} \in \operatorname{Spec}S$ which are minimal over $\mathfrak{m}_0$.

(i) \textit{$A_{\mathfrak{m}_0}$ is cycle regular.}
Let $\mathfrak{q} \in \operatorname{Spec}S$ be minimal over $\mathfrak{m}_0$, and let $V$ be a simple $A_{\mathfrak{q}}$-module.
The hypotheses of Proposition \ref{S regular local} hold: condition (1) holds by Lemma \ref{Sq regular}; (2.a) holds by Lemma \ref{over}.4; (2.b) holds by Lemma \ref{free}; and (2.c) holds by Lemma \ref{vertex invertible}.
Thus
$$1 \stackrel{\textsc{(i)}}{\leq} \operatorname{gldim}A_{\mathfrak{q}} \stackrel{\textsc{(ii)}}{\leq} \operatorname{dim}S_{\mathfrak{q}} = \operatorname{ht}_S(\mathfrak{q}) \stackrel{\textsc{(iii)}}{=} 1 \stackrel{\textsc{(iv)}}{=} \operatorname{ght}_R(\mathfrak{m}_0) \stackrel{\textsc{(v)}}{=} \operatorname{pd}_{A_{\mathfrak{q}}}(V).$$
Indeed, (\textsc{i}) and (\textsc{v}) hold by Proposition \ref{cycle hom}; (\textsc{ii}) holds by Proposition \ref{S regular local}; (\textsc{iii}) holds by Theorem \ref{height}.3; and (\textsc{iv}) holds by Lemma \ref{height lemma}.
Therefore $A_{\mathfrak{m}_0}$ is cycle regular.

(ii) \textit{$A_{\mathfrak{m}_0}$ is a noncommutative desingularization.}
By \cite[Corollary 2.14.1]{B3}, the (noncommutative) function fields of $A$ and $R$, and hence $A_{\mathfrak{m}_0}$ and $R_{\mathfrak{m}_0}$, are Morita equivalent,
$$A \otimes_R \operatorname{Frac}R \sim \operatorname{Frac}R.$$

(iii) Finally, suppose $\mathfrak{q} \in \operatorname{Spec}S$ is minimal over $\mathfrak{q} \cap R$.
We claim that $\operatorname{gldim}A_{\mathfrak{q}} = \operatorname{dim}S_{\mathfrak{q}}$.
By Theorem \ref{height}.2, either $\mathfrak{q} = \mathfrak{q}_D$ for some $D \in \mathcal{S}'$, or $\mathfrak{q} = 0$.
The case $\mathfrak{q} = \mathfrak{q}_D$ was shown in Claim (i), so suppose $\mathfrak{q} = 0$.

We first claim that for each $i \in Q_0$,
\begin{equation} \label{Frac S}
e_iA_{\mathfrak{q}}e_i = (\operatorname{Frac}S)e_i.
\end{equation}
Indeed, let $g \in \operatorname{Frac}S$ be arbitrary.
Fix $j \in Q_0$ for which $e_jAe_j = Se_j$.
Since $S$ is a domain,
\begin{equation} \label{frac S}
e_jA_{\mathfrak{q}}e_j = S_{\mathfrak{q}}e_j = (\operatorname{Frac}S)e_j.
\end{equation}
Thus there is an element $s \in e_jA_{\mathfrak{q}}e_j$ satisfying $\overbar{s} = g$.

Now fix a cycle $t_2e_jt_1 \in e_iA_{\mathfrak{q}}e_i$ that passes through $j$.
Then $t_1t_2 \in e_jA_{\mathfrak{q}}e_j$ has a vertex inverse $(t_1t_2)^*$ by (\ref{frac S}). 
Thus the element 
$$s' := t_2 (t_1t_2)^* st_1 \in e_iA_{\mathfrak{q}}e_i$$
satisfies $\overbar{s}' = \overbar{s} = g$.
Therefore (\ref{Frac S}) holds.

We now claim that for each $i,j \in Q_0$, there is a $(\operatorname{Frac}S)$-module isomorphism\footnote{In general, $\bar{\tau}_{\psi}(e_jAe_i)$ is not contained in $\operatorname{Frac}S$; otherwise (\ref{off diagonal}) would trivially hold.}
\begin{equation} \label{off diagonal}
e_jA_{\mathfrak{q}}e_i \cong \operatorname{Frac}S.
\end{equation}
Let $s \in e_jA_{\mathfrak{q}}e_i$ be arbitrary, and fix a cycle $t_2e_jt_1 \in e_iA_{\mathfrak{q}}e_i$ that passes through $j$. 
Then $t_1t_2$ has a vertex inverse $(t_1t_2)^*$ by (\ref{Frac S}).
Furthermore, $st_2 \in e_jA_{\mathfrak{q}}e_j$.
Thus
$$s = (t_1t_2)^*s (t_2t_1) \in (\operatorname{Frac}S)t_1.$$
Whence $e_jA_{\mathfrak{q}}e_i \subseteq (\operatorname{Frac}S)t_1$.
Conversely, (\ref{Frac S}) implies $e_jA_{\mathfrak{q}}e_i \supseteq (\operatorname{Frac}S)t_1$.
Thus
$$e_jA_{\mathfrak{q}}e_i = (\operatorname{Frac}S)t_1.$$
Furthermore, the $(\operatorname{Frac}S)$-module homomorphism 
$$\operatorname{Frac}S \to (\operatorname{Frac}S)t_1, \ \ \ \ s \mapsto st_1,$$
is an isomorphism since $\overbar{t}_1$ and $\operatorname{Frac}S$ are in the domain $\operatorname{Frac}B$, and $\bar{\tau}_{\psi}$ is injective.
Therefore (\ref{off diagonal}) holds.

It follows from (\ref{Frac S}) and (\ref{off diagonal}) that
$$A_{\mathfrak{q}} \cong M_d(\operatorname{Frac}S).$$
Thus $A_{\mathfrak{q}}$ is a semisimple algebra.
Therefore
$$\operatorname{gldim}A_{\mathfrak{q}} = 0 = \operatorname{dim}(\operatorname{Frac}S) = \operatorname{dim}S_{\mathfrak{q}}.$$
\end{proof}

\section{Local endomorphism rings}

Recall that $A$ is a nonnoetherian homotopy algebra satisfying assumptions (A) and (B) given in Section \ref{section 4}, unless stated otherwise.
For $a \in Q_1$, recall the ideal 
$$\mathfrak{m}_a := \bar{\tau}_{\psi}(e_{\operatorname{t}(a)}Aa) \subset S$$
from Proposition \ref{prime2}.
Given a simple matching $D \in \mathcal{S}'$ for which $\mathfrak{q} := \mathfrak{q}_D$ is a minimal prime over $\mathfrak{m}_0$, set
$$\mathfrak{m}_D := \bigcap_{a \in Q_1^{\operatorname{t}} \, : \, x_D \mid \overbar{a}} \mathfrak{m}_a \ \ \ \text{ and } \ \ \ \tilde{R} := \left(k+ \mathfrak{m}_D \right)_{\mathfrak{m}_D} + \mathfrak{q}S_{\mathfrak{q}}.$$

\begin{Lemma} \label{airplane}
Let $D \in \mathcal{S}'$ be a simple matching for which $\mathfrak{q} := \mathfrak{q}_D$ is a minimal prime over $\mathfrak{m}_0$, and let $a \in Q_1$.
If $x_D \mid \overbar{a}$, then
$$\mathfrak{m}_a S_{\mathfrak{q}} = \mathfrak{q} S_{\mathfrak{q}} = \sigma S_{\mathfrak{q}}.$$
\end{Lemma}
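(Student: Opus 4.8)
The plan is to reduce everything to Propositions~\ref{principal} and~\ref{prime2}, so that the lemma becomes pure bookkeeping. The equality $\mathfrak{q}S_{\mathfrak{q}} = \sigma S_{\mathfrak{q}}$ is exactly Proposition~\ref{principal}, and hence the only thing to prove is $\mathfrak{m}_a S_{\mathfrak{q}} = \mathfrak{q} S_{\mathfrak{q}}$.

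First I would note that the hypothesis $x_D \mid \overbar{a}$ forces $\overbar{a} \neq 1$, since $x_D$ is a polynomial variable in $B$; by the equivalence $\overbar{a} = 1 \Leftrightarrow \operatorname{deg}^+\operatorname{h}(a) = 1$ recorded just before Lemma~\ref{over}, this means $a \notin Q_1^*$, i.e. $a \in Q_1 \setminus Q_1^*$. Thus Proposition~\ref{prime2} applies and gives the finite prime decomposition $\mathfrak{m}_a = \bigcap_{D' \in \mathcal{S}' \,:\, x_{D'} \mid \overbar{a}} \mathfrak{q}_{D'}$, an intersection in which $D$ itself appears because $x_D \mid \overbar{a}$.

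Next, since localization at $\mathfrak{q} = \mathfrak{q}_D$ is exact it commutes with this finite intersection, so $\mathfrak{m}_a S_{\mathfrak{q}} = \bigcap_{D' \,:\, x_{D'}\mid\overbar{a}} \mathfrak{q}_{D'}S_{\mathfrak{q}}$. For each $D' \neq D$ occurring here I would show $\mathfrak{q}_{D'}S_{\mathfrak{q}} = S_{\mathfrak{q}}$: by the construction in part~(i) of the proof of Lemma~\ref{prime1}, applied with the roles of the two simple matchings interchanged, there is a cycle $s$ in $A$ with $x_{D'} \mid \overbar{s}$ and $x_D \nmid \overbar{s}$; since $B$ is a polynomial ring and $\mathfrak{q}_D$ is generated by monomials divisible by $x_D$, we get $\overbar{s} \in \mathfrak{q}_{D'} \setminus \mathfrak{q}_D$, so $\mathfrak{q}_{D'}$ contains the unit $\overbar{s}$ of $S_{\mathfrak{q}}$. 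Hence every term of the intersection except the $D' = D$ term is the unit ideal, and $\mathfrak{m}_a S_{\mathfrak{q}} = \mathfrak{q}_D S_{\mathfrak{q}} = \mathfrak{q} S_{\mathfrak{q}}$; combined with Proposition~\ref{principal} this is the claim.

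I do not expect any genuine obstacle: the lemma follows formally from the prime decomposition of $\mathfrak{m}_a$ together with the fact (Theorem~\ref{height}.1 and Lemma~\ref{prime1}) that distinct $\mathfrak{q}_{D'}$ are incomparable height-one primes. The only point deserving a line of care is the elementary observation, used to certify $\overbar{s} \notin \mathfrak{q}_D$, that for a monomial $g \in S$ one has $g \in \mathfrak{q}_D$ if and only if $x_D \mid g$ in $B$.
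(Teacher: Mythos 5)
Your argument is correct, but it runs along a genuinely different track from the paper's. The paper's proof is a one-line sandwich: since the unit cycle at $\operatorname{t}(a)$ is of the form $ba$, one has $\sigma \in \bar{\tau}_{\psi}(e_{\operatorname{t}(a)}Aa) = \mathfrak{m}_a$, giving $\sigma S_{\mathfrak{q}} \subseteq \mathfrak{m}_a S_{\mathfrak{q}}$; and because $\overbar{a}$ divides every generator $\overbar{pa}$ of $\mathfrak{m}_a$, one has $\mathfrak{m}_a \subseteq \mathfrak{q}_D$, giving $\mathfrak{m}_a S_{\mathfrak{q}} \subseteq \mathfrak{q}S_{\mathfrak{q}}$. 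Proposition~\ref{principal} closes the loop: $\mathfrak{q}S_{\mathfrak{q}} = \sigma S_{\mathfrak{q}}$, and all three ideals coincide. No appeal to the full prime decomposition of $\mathfrak{m}_a$ is required, only the observations that $\sigma \in \mathfrak{m}_a$ and $\mathfrak{m}_a \subseteq \mathfrak{q}_D$. Your proof instead localizes the decomposition $\mathfrak{m}_a = \bigcap_{D' : x_{D'} \mid \overbar{a}} \mathfrak{q}_{D'}$ of Proposition~\ref{prime2} and checks that the irrelevant factors $\mathfrak{q}_{D'}$, $D' \neq D$, become the unit ideal in $S_{\mathfrak{q}}$, using the separating cycles from the proof of Lemma~\ref{prime1}. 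This is more machinery than strictly needed, but it has the expository advantage of explaining the author's remark following the lemma: it makes visible exactly which part of $\mathfrak{q}_D \setminus \mathfrak{m}_a$ gets inverted upon localization, namely the contributions coming from the primes $\mathfrak{q}_{D'}$ with $D' \neq D$. One small economy: rather than invoking the monomial criterion for membership in $\mathfrak{q}_D$, you can simply observe that $\mathfrak{q}_D \subseteq x_D B \cap S$, so a monomial of $S$ not divisible by $x_D$ in $B$ cannot lie in $\mathfrak{q}_D$; both versions are fine.
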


We note that the relation $\mathfrak{m}_a S_{\mathfrak{q}} = \mathfrak{q}S_{\mathfrak{q}}$ is nontrivial since if $\overbar{a} \not = x_D$, then $\mathfrak{q} \not \subseteq \mathfrak{m}_a$ in general; that is, there may be a cycle $s$ for which $x_D \mid \overbar{s}$ but $\overbar{a} \nmid \overbar{s}$.

\begin{proof}
Suppose $x_D \mid \overbar{a}$. 
Then
$$\sigma S_{\mathfrak{q}} \subseteq \bar{\tau}_{\psi}(e_{\operatorname{t}(a)}Aa) S_{\mathfrak{q}} = \mathfrak{m}_a S_{\mathfrak{q}} \subseteq \mathfrak{q} S_{\mathfrak{q}} \stackrel{\textsc{(i)}}{=} \sigma S_{\mathfrak{q}},$$
where (\textsc{i}) holds by Proposition \ref{principal}. 
\end{proof}

\begin{Proposition} \label{center}
Let $D \in \mathcal{S}'$ be a simple matching for which $\mathfrak{q} := \mathfrak{q}_D$ is a minimal prime over $\mathfrak{m}_0$.
The center $Z(A_{\mathfrak{q}})$ of $A_{\mathfrak{q}}$ is isomorphic to the subalgebra 
$$\tilde{R} := (k+ \mathfrak{m}_D)_{\mathfrak{m}_D} + \mathfrak{q}S_{\mathfrak{q}} = \bigcap_{a \in Q_1^{\operatorname{t}}} \bar{\tau}_{\psi}( e_{\operatorname{t}(a)}A_{\mathfrak{q}}e_{\operatorname{t}(a)}) \subset S_{\mathfrak{q}} \cong Z(A'_{\mathfrak{q}}).$$
\end{Proposition}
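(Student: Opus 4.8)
The plan is to identify the center of $A_{\mathfrak{q}}$ with the intersection of the diagonal corner rings $\bar{\tau}_{\psi}(e_{\operatorname{t}(a)}A_{\mathfrak{q}}e_{\operatorname{t}(a)})$ over $a \in Q_1^{\operatorname{t}}$, and then to compute that intersection explicitly as $\tilde R$. For the first identification, the key point is that $A_{\mathfrak{q}}$ is a tiled matrix ring whose off-diagonal corners are rank-one $S_{\mathfrak{q}}$-modules (Lemma \ref{free}, applied to the vertices $i$ with $e_i\epsilon_D \neq 0$), so a central element must act as a scalar (an element of $\operatorname{Frac}B$) on each corner; since $A_{\mathfrak{q}}$ contains enough vertex-invertible paths between any two vertices with $e_i\epsilon_D \neq 0$ (Lemma \ref{vertex invertible}, via Lemma \ref{green}), these scalars must all coincide. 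Thus $Z(A_{\mathfrak{q}})$ embeds diagonally as $\bigcap_{i \in Q_0}\bar{\tau}_{\psi}(e_iA_{\mathfrak{q}}e_i)$, and by Lemma \ref{over}.4 the corners at vertices of indegree $\geq 2$ all equal $S_{\mathfrak{q}}$, so the intersection reduces to the corners at the finitely many vertices $\operatorname{t}(a)$, $a \in Q_1^{\operatorname{t}}$ — giving the middle equality in the statement. (The comparison $S_{\mathfrak{q}} \cong Z(A'_{\mathfrak{q}})$ on the right is immediate from Lemma \ref{over}.2 together with the fact that for noetherian $A'$ the cyclic and central localizations agree.)

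Next I would compute $\bar{\tau}_{\psi}(e_{\operatorname{t}(a)}A_{\mathfrak{q}}e_{\operatorname{t}(a)})$ for $a \in Q_1^{\operatorname{t}}$. Write $\delta := \delta_a \in Q_1^*$, so $\operatorname{t}(a) = \operatorname{h}(\delta)$ and $\operatorname{t}(\delta)$ has indegree $\geq 2$ by assumption (B), whence $e_{\operatorname{t}(\delta)}A_{\mathfrak{q}}e_{\operatorname{t}(\delta)} = S_{\mathfrak{q}}$. A cycle at $\operatorname{t}(a)$ is either $\bar\tau_\psi$-trivial-looking, i.e.\ lies in $k + (\text{cycles through some arrow out of }\operatorname{t}(a))$, or factors through $\delta$ on the right and $a$ on the left (since $a$ is the unique arrow out of $\operatorname{t}(a)$ and, dually, $\operatorname{deg}^+\operatorname{t}(a)=1$ forces any nontrivial path into $\operatorname{t}(a)$ to end in $\delta$). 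One gets
$$
\bar{\tau}_{\psi}(e_{\operatorname{t}(a)}A_{\mathfrak{q}}e_{\operatorname{t}(a)}) = k + \overbar{a}\,\bar{\tau}_{\psi}(e_{\operatorname{t}(a)}A_{\mathfrak{q}}e_{\operatorname{t}(\delta)})\,\overbar{\delta} = k + \overbar{a\delta}\,S_{\mathfrak{q}},
$$
using that the path corner $e_{\operatorname{t}(a)}A_{\mathfrak{q}}e_{\operatorname{t}(\delta)}$ is a free rank-one $S_{\mathfrak{q}}$-module by Lemma \ref{free} and that $\overbar{\delta}=1$ (as $\operatorname{deg}^+\operatorname{h}(\delta)=\operatorname{deg}^+\operatorname{t}(a)=1$). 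Now $\overbar{a\delta} = \overbar{a}$, and since $x_D \mid \overbar{a}$ we have $\overbar{a}S_{\mathfrak{q}} = \mathfrak{m}_a S_{\mathfrak{q}} = \mathfrak{q}S_{\mathfrak{q}}$ by Lemma \ref{airplane}; for $a \in Q_1^{\operatorname{t}}$ with $x_D \nmid \overbar{a}$ the generator $\overbar{a\delta}$ is a unit in $S_{\mathfrak{q}}$, so that corner is all of $S_{\mathfrak{q}}$ and drops out of the intersection. Hence the intersection over all $a \in Q_1^{\operatorname{t}}$ equals the intersection over those with $x_D \mid \overbar{a}$ of the rings $k + \mathfrak{q}S_{\mathfrak{q}}$ — wait, more precisely, of the rings $(k + \mathfrak{m}_a)$ localized appropriately; collecting denominators outside $\mathfrak{m}_D = \bigcap \mathfrak{m}_a$ yields exactly $(k+\mathfrak{m}_D)_{\mathfrak{m}_D} + \mathfrak{q}S_{\mathfrak{q}} = \tilde R$. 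Here one uses that $\mathfrak{m}_a S_{\mathfrak{q}} = \mathfrak{q}S_{\mathfrak{q}}$ for every such $a$ to see that the "$+\mathfrak{q}S_{\mathfrak{q}}$" part is common to all of them, while the "$k + \mathfrak{m}_D$" part captures the constant terms together with the non-unit elements that remain integral (not yet inverted).

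The main obstacle I anticipate is the bookkeeping in the last step: showing that the intersection $\bigcap_{a : x_D \mid \overbar{a}}\bar{\tau}_{\psi}(e_{\operatorname{t}(a)}A_{\mathfrak{q}}e_{\operatorname{t}(a)})$ is genuinely $(k+\mathfrak{m}_D)_{\mathfrak{m}_D} + \mathfrak{q}S_{\mathfrak{q}}$ and not something larger or smaller. The subtlety, flagged in the remark after Lemma \ref{airplane}, is that $\mathfrak{q} \not\subseteq \mathfrak{m}_a$ in general, so $k + \mathfrak{m}_a$ is not simply $k + \mathfrak{q}$; one must carefully track which elements of $S_{\mathfrak{q}}$ lie in $k + \overbar{a}S_{\mathfrak{q}}$ for every relevant $a$ simultaneously. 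I would handle this by separating the "$\mathfrak{q}S_{\mathfrak{q}}$-part" (where $\mathfrak{m}_a S_{\mathfrak{q}} = \mathfrak{q}S_{\mathfrak{q}}$ makes all constraints coincide) from the complementary "constant-plus-small" part, where the constraint from each $a$ is membership in $(k + \mathfrak{m}_a)$ before inverting anything outside $\mathfrak{m}_D$; the intersection of the $(k+\mathfrak{m}_a)_{\mathfrak{m}_a}$ over such $a$, after passing to the common multiplicative set $S \setminus \mathfrak{m}_D$, is precisely $(k+\mathfrak{m}_D)_{\mathfrak{m}_D}$ since an element integral at each $\mathfrak{m}_a$ with the same constant term is integral at $\mathfrak{m}_D$. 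Combining the two parts gives $\tilde R$, completing the proof.
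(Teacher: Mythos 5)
Your overall route matches the paper's: use Lemma~\ref{over}(2),(4) to identify $Z(A_\mathfrak{q})$ with $\bigcap_{a\in Q_1^{\operatorname{t}}}\bar{\tau}_\psi(e_{\operatorname{t}(a)}A_\mathfrak{q}e_{\operatorname{t}(a)})$, compute each corner by factoring cycles at $\operatorname{t}(a)$ through $\delta_a$ and $a$ (via Lemma~\ref{free} and Lemma~\ref{airplane}), and intersect.

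The corner computation as written does not quite parse, though. A nontrivial cycle at $\operatorname{t}(a)$ factors as $\delta_a\,q'\,a$ with $q'\in e_{\operatorname{t}(\delta_a)}Ae_{\operatorname{h}(a)}$, not through the corner $e_{\operatorname{t}(a)}A_\mathfrak{q}e_{\operatorname{t}(\delta_a)}$, and by Lemma~\ref{free} the module $e_{\operatorname{t}(\delta_a)}A_\mathfrak{q}e_{\operatorname{h}(a)}$ is a cyclic free $S_\mathfrak{q}$-module $\overbar{t}\,S_\mathfrak{q}$ for some path $t$, not $S_\mathfrak{q}$ itself as a subset of $\operatorname{Frac}B$; in particular $k+\overbar{a\delta_a}\,S_\mathfrak{q}=k+\overbar{a}\,S_\mathfrak{q}$ is not a subring of $S_\mathfrak{q}$ since $\overbar{a}\notin S$ in general. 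The corrected expression is $k+\overbar{ta}\,S_\mathfrak{q}=k+\mathfrak{m}_aS_\mathfrak{q}$, which by Lemma~\ref{airplane} equals $k+\mathfrak{q}S_\mathfrak{q}$. You also need to account for the diagonal piece $A^{\operatorname{t}(a)}_{\mathfrak{q}\cap A^{\operatorname{t}(a)}}=(k+\mathfrak{m}_a)_{\mathfrak{m}_a}$ that the cyclic localization inserts and that is not obtained by decomposing cycles; the paper records the corner precisely as $(k+\mathfrak{m}_a)_{\mathfrak{q}\cap(k+\mathfrak{m}_a)}+\mathfrak{m}_aS_\mathfrak{q}$. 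Since $\mathfrak{m}_a\subseteq\mathfrak{q}$ gives $(k+\mathfrak{m}_a)_{\mathfrak{m}_a}\subseteq k+\mathfrak{q}S_\mathfrak{q}$, this extra term is absorbed, but the inclusion should be stated. Finally, your ``collecting denominators'' for the intersection is exactly the place where the paper does the real work (steps (v)--(vi), using that each $\mathfrak{m}_a$ is generated by nonconstant monomials to get $(k+\mathfrak{m}_a)\cap\mathfrak{m}_b=\mathfrak{m}_a\cap\mathfrak{m}_b$); that argument, or the observation that after the simplification above every relevant corner and also $\tilde R$ already equal $k+\mathfrak{q}S_\mathfrak{q}$, should replace the gesture.
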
 

\begin{proof}
Set 
$$Q_1^{\operatorname{t}} \cap D := Q_1^{\operatorname{t}} \cap \psi^{-1}(D) = \{ a \in Q_1^{\operatorname{t}} \ : \ x_D \mid \overbar{a} \}.$$
We claim that 
$$\begin{array}{rcl}
Z(A_{\mathfrak{q}}) & \stackrel{\textsc{(i)}}{\cong} & \bigcap_{i \in Q_0} \bar{\tau}_{\psi}(e_iA_{\mathfrak{q}}e_i)\\
& \stackrel{\textsc{(ii)}}{=} & \bigcap_{a \in Q_1^{\operatorname{t}}} \bar{\tau}_{\psi}(e_{\operatorname{t}(a)}A_{\mathfrak{q}}e_{\operatorname{t}(a)})\\
& \stackrel{\textsc{(iii)}}{=}  & \bigcap_{a \in Q_1^{\operatorname{t}}} \left( (k + \mathfrak{m}_a)_{\mathfrak{q} \cap (k + \mathfrak{m}_a)} + \mathfrak{m}_aS_{\mathfrak{q}} \right)\\
& \stackrel{\textsc{(iv)}}{=}  & \bigcap_{a \in Q_1^{\operatorname{t}} \cap D} \left( (k + \mathfrak{m}_a)_{\mathfrak{m}_a} + \mathfrak{q}S_{\mathfrak{q}} \right)\\
& \stackrel{\textsc{(v)}}{=}  & \bigcap_{a \in Q_1^{\operatorname{t}} \cap D} (k + \mathfrak{m}_a)_{\mathfrak{m}_a} + \mathfrak{q}S_{\mathfrak{q}}\\
& \stackrel{\textsc{(vi)}}{=} & (k + \cap_{a \in Q_1^{\operatorname{t}} \cap D} \mathfrak{m}_a)\left( \left( k + \cap_{a \in Q_1^{\operatorname{t}} \cap D} \mathfrak{m}_a \right) \setminus \cup_{a \in Q_1^{\operatorname{t}} \cap D} \mathfrak{m}_a \right)^{-1} + \mathfrak{q}S_{\mathfrak{q}}\\
& = & (k + \mathfrak{m}_D)_{\mathfrak{m}_D} + \mathfrak{q}S_{\mathfrak{q}}\\
& = & \tilde{R}.
\end{array}$$
Indeed, (\textsc{i}) holds by Lemma \ref{over}.2 and (\textsc{ii}) holds by Lemma \ref{over}.4.

To show (\textsc{iii}), suppose $a \in Q_1^{\operatorname{t}}$.
Recall the notation $A^{i} := \bar{\tau}_{\psi}(e_iAe_i)$.
Then 
$$A^{\operatorname{t}(a)} = k + \mathfrak{m}_a \ \ \ \text{ and } \ \ \ A^{\operatorname{h}(a)} = S.$$
Thus by the definition of cyclic localization,
\begin{align*}
\bar{\tau}_{\psi}\left( e_{\operatorname{t}(a)}A_{\mathfrak{q}}e_{\operatorname{t}(a)} \right) & = A^{\operatorname{t}(a)}_{\mathfrak{q} \cap A^{\operatorname{t}(a)}} + \sum_{\substack{qp \in e_{\operatorname{t}(a)}Ae_{\operatorname{t}(a)} \\ \text{a nontrivial cycle}}} \overbar{q} \,  A^{\operatorname{h}(p)}_{\mathfrak{q} \cap A^{\operatorname{h}(p)}} \, \overbar{p} \\
& = \left( k + \mathfrak{m}_a \right)_{\mathfrak{q} \cap \left(k + \mathfrak{m}_a \right)} + \sum_{\substack{q \in e_{\operatorname{t}(a)}Ae_{\operatorname{h}(a)} \\ \text{a path}}} \overbar{q} \, S_{\mathfrak{q}} \, \overbar{a} \\
& = \left( k + \mathfrak{m}_a \right)_{\mathfrak{q} \cap \left( k + \mathfrak{m}_a \right)} + \mathfrak{m}_a S_{\mathfrak{q}}.
\end{align*}

To show (\textsc{iv}), note that for $a \in Q_1^{\operatorname{t}}$, 
$$\mathfrak{m}_a \subseteq \mathfrak{q} \ \ \ \text{ if and only if } \ \ \ a \in \psi^{-1}(D).$$
Furthermore, if $\mathfrak{m}_a \subseteq \mathfrak{q}$, then $\mathfrak{m}_a S_{\mathfrak{q}} = \mathfrak{q}S_{\mathfrak{q}}$ by Lemma \ref{airplane}.  
Otherwise if $\mathfrak{m}_a \not \subseteq \mathfrak{q}$, then $\mathfrak{m}_aS_{\mathfrak{q}} = S_{\mathfrak{q}}$.

(\textsc{v}) holds since for $a \in Q_1^{\operatorname{t}} \cap D$,
$$\mathfrak{m}_a(k+ \mathfrak{m}_a)_{\mathfrak{m}_a} \subseteq \mathfrak{q}S_{\mathfrak{q}}.$$

Finally, to show (\textsc{vi}), recall that each $\mathfrak{m}_a$ is generated over $S$ by the $\bar{\tau}_{\psi}$-images of a set of nontrivial cycles, and thus by a set of nonconstant monomials in $S$.
Therefore for any $a,b \in Q_1^{\operatorname{t}}$, we have $(k+ \mathfrak{m}_a) \cap \mathfrak{m}_b = \mathfrak{m}_a \cap \mathfrak{m}_b$.
\end{proof}

\begin{Definition} \rm{
We say two arrows $a,b \in Q_1$ are \textit{coprime} if $\overbar{a}$ and $\overbar{b}$ are coprime in $B$; that is, the only common factors of $\overbar{a}$ and $\overbar{b}$ in $B$ are the units.
}\end{Definition}

\begin{Lemma} \label{center2}
Suppose the arrows in $Q_1^{\operatorname{t}}$ are pairwise coprime, and let $a \in Q_1^{\operatorname{t}}$. 
Consider a simple matching $D \in \mathcal{S}'$ for which $x_D \mid \overbar{a}$. 
Set $\mathfrak{q} := \mathfrak{q}_D$ and $i := \operatorname{t}(a)$. 
Then
$$Z(A_{\mathfrak{q}}) = \tilde{R} \, \mathbf{1} = A^i_{\mathfrak{q}} \, \mathbf{1} \cong e_iA_{\mathfrak{q}}e_i.$$
\end{Lemma}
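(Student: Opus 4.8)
The plan is to reduce the whole statement to Proposition \ref{center} by observing that, under the coprimality hypothesis, the index set defining $\mathfrak{m}_D$ collapses to the single arrow $a$.

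First I would show that $a$ is the \emph{unique} arrow in $Q_1^{\operatorname{t}}$ with $x_D \mid \overbar{a}$. Indeed, if $b \in Q_1^{\operatorname{t}}$ were distinct from $a$ with $x_D \mid \overbar{b}$, then the variable $x_D$, which is a non-unit in the polynomial ring $B$, would be a common factor of $\overbar{a}$ and $\overbar{b}$ in $B$, contradicting the assumption that the arrows of $Q_1^{\operatorname{t}}$ are pairwise coprime. Hence $\{\, b \in Q_1^{\operatorname{t}} : x_D \mid \overbar{b} \,\} = \{a\}$, and therefore
$$\mathfrak{m}_D = \bigcap_{b \in Q_1^{\operatorname{t}} \, : \, x_D \mid \overbar{b}} \mathfrak{m}_b = \mathfrak{m}_a.$$

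Next, with $i = \operatorname{t}(a)$, note $\operatorname{deg}^+ i = 1$ and, by assumption (B), $\operatorname{deg}^+ \operatorname{h}(a) \geq 2$, so $A^{\operatorname{h}(a)} = S$ by Lemma \ref{over}.4. The computation in the proof of Proposition \ref{center} — steps (\textsc{iii})–(\textsc{v}) there — then gives
$$A^i_{\mathfrak{q}} = \bar{\tau}_{\psi}(e_iA_{\mathfrak{q}}e_i) = (k + \mathfrak{m}_a)_{\mathfrak{m}_a} + \mathfrak{q}S_{\mathfrak{q}},$$
using that $x_D \mid \overbar{a}$ forces $\mathfrak{m}_a \subseteq \mathfrak{q}$, so $\mathfrak{q} \cap (k + \mathfrak{m}_a) = \mathfrak{m}_a$, and $\mathfrak{m}_aS_{\mathfrak{q}} = \mathfrak{q}S_{\mathfrak{q}}$ by Lemma \ref{airplane}. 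Combining this with the identity $\mathfrak{m}_D = \mathfrak{m}_a$ from the first step and the definition $\tilde{R} := (k + \mathfrak{m}_D)_{\mathfrak{m}_D} + \mathfrak{q}S_{\mathfrak{q}}$, we obtain $A^i_{\mathfrak{q}} = \tilde{R}$ as subalgebras of $S_{\mathfrak{q}}$.

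Finally I would assemble the three identifications. By Proposition \ref{center}, the center $Z(A_{\mathfrak{q}})$ is realized as the scalar-matrix subalgebra $\tilde{R}\,\mathbf{1}$ of $A_{\mathfrak{q}} \subset M_{|Q_0|}(\operatorname{Frac}B)$; since $\tilde{R} = A^i_{\mathfrak{q}}$ this equals $A^i_{\mathfrak{q}}\,\mathbf{1}$. Moreover $A^i_{\mathfrak{q}}\,\mathbf{1} \cong A^i_{\mathfrak{q}} = \bar{\tau}_{\psi}(e_iA_{\mathfrak{q}}e_i) \cong e_iA_{\mathfrak{q}}e_i$, the last isomorphism because $\tau_{\psi}$ is an injective ring homomorphism carrying the corner ring $e_iA_{\mathfrak{q}}e_i$ isomorphically onto the single-entry algebra $A^i_{\mathfrak{q}}$ (valid since $B$ is a domain, so $e_{ii}M_{|Q_0|}(\operatorname{Frac}B)e_{ii} \cong \operatorname{Frac}B$ as rings). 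This yields $Z(A_{\mathfrak{q}}) = \tilde{R}\,\mathbf{1} = A^i_{\mathfrak{q}}\,\mathbf{1} \cong e_iA_{\mathfrak{q}}e_i$, as claimed. The only genuine content is the coprimality observation in the first step; everything else is bookkeeping with Proposition \ref{center}, Lemma \ref{over}.4, and Lemma \ref{airplane}, so I do not anticipate a real obstacle beyond keeping the corner-ring identification $e_iA_{\mathfrak{q}}e_i \cong A^i_{\mathfrak{q}}$ straight.
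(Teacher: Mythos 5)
Your proof is correct and reduces to Proposition \ref{center} by a slightly different route than the paper's. The paper's proof observes that coprimality makes every arrow in $Q_1^{\operatorname{t}} \setminus \{a\}$ vertex invertible in $A_{\mathfrak{q}}$ (via Lemma \ref{vertex invertible}), so that $\bar{\tau}_{\psi}(e_jA_{\mathfrak{q}}e_j) = S_{\mathfrak{q}}$ for every $j \neq i$, collapsing the corner-ring intersection $\bigcap_{a' \in Q_1^{\operatorname{t}}}\bar{\tau}_{\psi}(e_{\operatorname{t}(a')}A_{\mathfrak{q}}e_{\operatorname{t}(a')})$ from Proposition \ref{center} to the single term $A^i_{\mathfrak{q}}$. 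You instead work with the other formula supplied by Proposition \ref{center}, namely $\tilde{R} = (k+\mathfrak{m}_D)_{\mathfrak{m}_D} + \mathfrak{q}S_{\mathfrak{q}}$, and observe that coprimality already forces the \emph{defining intersection} $\mathfrak{m}_D = \bigcap_{b: x_D \mid \overbar{b}}\mathfrak{m}_b$ to collapse to $\mathfrak{m}_a$, after which the single-corner computation from step (\textsc{iii}) of Proposition \ref{center} together with $\mathfrak{q}\cap(k+\mathfrak{m}_a)=\mathfrak{m}_a$ and Lemma \ref{airplane} yields $\tilde{R} = A^i_{\mathfrak{q}}$ directly. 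Your route avoids Lemma \ref{vertex invertible} entirely and is a cleaner use of the closed-form description of $\tilde{R}$; the paper's route stays closer to the corner-ring intersection machinery that recurs later in the section. Both are sound, and the concluding identifications $Z(A_{\mathfrak{q}}) = \tilde{R}\mathbf{1} = A^i_{\mathfrak{q}}\mathbf{1} \cong e_iA_{\mathfrak{q}}e_i$ are handled correctly in your write-up via the injectivity of $\tau_{\psi}$.
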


\begin{proof}
Suppose the arrows in $Q_1^{\operatorname{t}}$ are pairwise coprime.
Then each arrow in $Q_1^{\operatorname{t}} \setminus \{ a\}$ is vertex invertible in $A_{\mathfrak{q}}$ by Lemma \ref{vertex invertible}.
Thus for each $j \in Q_0 \setminus \{ i\}$, 
$$e_jA_{\mathfrak{q}}e_j = S_{\mathfrak{q}}e_j,$$
by Lemma \ref{over}.4.
The lemma then follows by Proposition \ref{center}. 
\end{proof}

In the following two lemmas, let $B$ be an integral domain, and let $A = \left[ A^{ij} \right] \subset M_d(B)$ be a tiled matrix ring.
Fix $i,j,k \in \{ 1, \ldots, d \}$.
For $p \in e_iAe_j$, denote by $\overbar{p}$ the element of $B$ satisfying $p = \overbar{p}e_{ij}$.

\begin{Lemma} \label{someh}
Suppose 
\begin{equation} \label{Aij}
A^{ij} \not = 0, \ \ \ \ A^{ji} \not = 0,
\end{equation}
and 
\begin{equation} \label{Aij2}
A^i \mathbf{1}_d = Z(A).
\end{equation}
Then for each $f \in \operatorname{Hom}_{Z(A)}\left( e_jAe_i, e_kAe_i \right)$, there is some $h \in \operatorname{Frac}B$ such that for each $p \in e_jAe_i$, we have $\overbar{f(p)} = h \overbar{p}$.
\end{Lemma}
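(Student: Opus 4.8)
The plan is to exhibit $h$ as a single ratio of matrix entries and then verify the asserted formula by using the two hypotheses together: the identification $Z(A)=A^i\mathbf{1}_d$ from \eqref{Aij2}, and the inclusion $A^{ij}A^{ji}\subseteq A^{ii}=A^i$, which holds automatically in any tiled matrix ring.

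First I would fix a nonzero $p_0\in e_jAe_i$, which exists since $A^{ji}\neq 0$ by \eqref{Aij}, and set
\[
h := \overbar{f(p_0)}\,\overbar{p_0}^{\,-1}\in\operatorname{Frac}B,
\]
which makes sense because $\overbar{p_0}\neq 0$ ($B$ being a domain). I would also fix a nonzero $q\in e_iAe_j$, available since $A^{ij}\neq 0$.

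Next, for an arbitrary $p\in e_jAe_i$, I would observe that $qp\in e_iAe_i$ has matrix entry $\overbar{q}\,\overbar{p}\in A^{ii}=A^i$, and likewise $\overbar{q}\,\overbar{p_0}\in A^i$; hence by \eqref{Aij2} the scalar matrices $z:=(\overbar{q}\,\overbar{p})\mathbf{1}_d$ and $z_0:=(\overbar{q}\,\overbar{p_0})\mathbf{1}_d$ both lie in $Z(A)$. The one nontrivial identity is
\[
z\cdot p_0 \;=\; (\overbar{q}\,\overbar{p}\,\overbar{p_0})\,e_{ji} \;=\; z_0\cdot p,
\]
which holds because $B$ is commutative. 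Applying $f$ and using its $Z(A)$-linearity gives $z\cdot f(p_0)=z_0\cdot f(p)$ in $e_kAe_i$; comparing the single nonzero matrix entries of the two sides yields $\overbar{q}\,\overbar{p}\,\overbar{f(p_0)}=\overbar{q}\,\overbar{p_0}\,\overbar{f(p)}$ in $B$. Since $B$ is a domain and $\overbar{q}\neq 0$, I may cancel $\overbar{q}$ to obtain $\overbar{p}\,\overbar{f(p_0)}=\overbar{p_0}\,\overbar{f(p)}$, that is, $\overbar{f(p)}=h\,\overbar{p}$, as desired.

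There is no serious obstacle here; the only point to keep straight is the indexing, namely that products of off-diagonal entries land in the diagonal block $A^i$, so that the scalar matrices built from them are genuinely central and $f$ may legitimately be pulled past them. The domain hypothesis on $B$ is used twice: once to guarantee $\overbar{p_0}\neq 0$, and once to cancel $\overbar{q}$.
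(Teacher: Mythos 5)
Your proof is correct and follows essentially the same idea as the paper: pair an arbitrary $p\in e_jAe_i$ with a fixed nonzero $q\in e_iAe_j$ so that $qp$ lands in $e_iAe_i\cong A^i$, use $A^i\mathbf{1}_d=Z(A)$ to turn these products into central scalars that can be pulled through $f$, and then cancel $\overbar{q}$ in the domain $B$. If anything, your write-up is slightly more careful than the paper's display (which writes $\overbar{p_1q}f(p_2)=f((p_1q)p_2)$ with $p_1q\in e_jAe_j$, whose entry a priori lies in $A^j$ rather than $A^i$), since you consistently use the products $qp,qp_0\in e_iAe_i$ so that centrality is directly guaranteed by hypothesis (\ref{Aij2}).
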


\begin{proof}
Let $f \in \operatorname{Hom}_{Z(A)}\left(e_jAe_i, e_k Ae_i \right)$.
By assumption (\ref{Aij}), there is some $0 \not = q \in e_iAe_j$.
By assumption (\ref{Aij2}), for $p_1,p_2 \in e_jAe_i$, 
$$\overbar{q} \, \overbar{p}_1 f(p_2) = \overbar{p_1q} f(p_2) = f((p_1q)p_2) = f(p_1(qp_2)) = f((p_2q)p_1) = \overbar{p_2q}f(p_1) = \overbar{q} \, \overbar{p}_2 f(p_1).$$
Thus, since $B$ is an integral domain,
\begin{equation*} \label{p1}
\overbar{p}_1 f(p_2) = \overbar{p}_2 f(p_1).
\end{equation*}
In particular, if $p_1$ and $p_2$ are nonzero, then
$$\frac{\overbar{f(p_1)}}{\overbar{p}_1} = \frac{\overbar{f(p_2)}}{\overbar{p}_2} =: h \in \operatorname{Frac}B.$$
Therefore for each $p \in e_jAe_i$, we have $\overbar{f(p)} = h \overbar{p}$. 
\end{proof}

\begin{Lemma} \label{endofstory}
Suppose (\ref{Aij}) and (\ref{Aij2}) hold. 
If there is some $p \in e_jAe_i$ such that for each $f \in \operatorname{Hom}_{Z(A)}\left( e_jAe_i, e_kAe_i \right)$, there is some $r \in e_kAe_j$ satisfying
\begin{equation} \label{Aij3}
f(p) = rp,
\end{equation}
then
$$\operatorname{Hom}_{Z(A)}\left(e_jAe_i, e_kAe_i \right) \cong e_kAe_j.$$
Similarly, if there is some $p \in e_iAe_j$ such that for each $f \in \operatorname{Hom}_{Z(A)}\left( e_iAe_j, e_iAe_k \right)$, there is some $r \in e_jAe_k$ satisfying $f(p) = pr$, then
$$\operatorname{Hom}_{Z(A)}\left( e_iAe_j, e_iAe_k \right) \cong e_jAe_k.$$
\end{Lemma}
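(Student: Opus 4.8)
The plan is to realize the isomorphism explicitly by left multiplication. I would set
$$\Phi \colon e_kAe_j \longrightarrow \operatorname{Hom}_{Z(A)}\bigl(e_jAe_i,\, e_kAe_i\bigr), \qquad \Phi(r)(q) := rq ,$$
and first check it is well defined ($rq \in e_kAe_i$ for $r \in e_kAe_j$, $q \in e_jAe_i$) and a homomorphism of $Z(A)$-modules (immediate from centrality: each $\Phi(r)$ is $Z(A)$-linear, and $\Phi(zr) = z\,\Phi(r)$). The goal is then to prove $\Phi$ bijective. Injectivity is immediate: by (\ref{Aij}) there is a nonzero $q \in e_jAe_i \cong A^{ji}$, and if $\Phi(r) = 0$ then $rq = 0$, i.e.\ $\overbar{r}\,\overbar{q}\,e_{ki} = 0$, so $\overbar{r} = 0$ since $B$ is a domain and $\overbar{q}\neq 0$; hence $r=0$.

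For surjectivity I would argue as follows. Given $f \in \operatorname{Hom}_{Z(A)}(e_jAe_i, e_kAe_i)$, apply Lemma \ref{someh} (whose hypotheses (\ref{Aij}) and (\ref{Aij2}) are in force) to obtain $h \in \operatorname{Frac}B$ with $\overbar{f(q)} = h\,\overbar{q}$ for all $q \in e_jAe_i$. The hypothesis of the lemma then furnishes a nonzero $p \in e_jAe_i$ and an $r \in e_kAe_j$ with $f(p) = rp$; comparing the single nonzero matrix entries gives $h\,\overbar{p} = \overbar{f(p)} = \overbar{rp} = \overbar{r}\,\overbar{p}$ in $\operatorname{Frac}B$, and cancelling the nonzero $\overbar{p}$ yields $h = \overbar{r}$. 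Hence $\overbar{f(q)} = \overbar{r}\,\overbar{q} = \overbar{rq}$, so $f(q) = rq = \Phi(r)(q)$ for all $q$, i.e.\ $f = \Phi(r)$. Thus $\Phi$ is an isomorphism of $Z(A)$-modules.

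The second (right-handed) statement I would prove by the mirror-image argument: take the natural map $e_jAe_k \to \operatorname{Hom}_{Z(A)}(e_iAe_j, e_iAe_k)$, $r \mapsto (q \mapsto qr)$, use a nonzero element of $e_iAe_j$ (nonzero by (\ref{Aij})) for injectivity, and for surjectivity use the right-handed analogue of Lemma \ref{someh} (the identical computation with left and right multiplication swapped, giving $\overbar{f(q)} = \overbar{q}\,h$) together with the relation $f(p) = pr$. The only thing that is not pure bookkeeping — and what I expect to be the one point to get right — is that Lemma \ref{someh} alone merely says $f$ acts as multiplication by some $h \in \operatorname{Frac}B$; it is exactly the single relation $f(p) = rp$ at one \emph{nonzero} $p$ that forces $h = \overbar{r} \in A^{kj}$, i.e.\ that exhibits $f$ as coming from the module $e_kAe_j$ itself rather than only from $\operatorname{Frac}B$.
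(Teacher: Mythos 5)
Your proof is correct and follows essentially the same route as the paper's: both invoke Lemma \ref{someh} to realize $f$ as multiplication by some $h\in\operatorname{Frac}B$, use the hypothesized relation $f(p)=rp$ at a nonzero $p$ to pin down $h=\overbar{r}$, and then verify bijectivity of the natural multiplication map via the domain property of $B$ together with (\ref{Aij}). Your explicit remark that the hypothesis at a single nonzero $p$ is what forces $h\in A^{kj}$ (rather than merely $h\in\operatorname{Frac}B$) is exactly the key point the paper's proof is organized around.
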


\begin{proof}
Fix $f \in \operatorname{Hom}_{Z(A)}\left( e_jAe_i, e_kAe_i \right)$.
By Lemma \ref{someh}, there is some $h \in \operatorname{Frac}B$ such that for each $p \in e_jAe_i$, we have
\begin{equation} \label{wonderful}
\overbar{f(p)} = h \overbar{p}.
\end{equation}
Let $p'$ be as in (\ref{Aij3}).
Then there is some $r \in e_kAe_j$ such that $f(p') = rp'$.
Whence $\overbar{r} = h$ by (\ref{wonderful}), since $B$ is an integral domain.
Thus $r = he_{kj}$.
Therefore for each $p \in e_jAe_i$, we have $f(p) = rp$ by (\ref{wonderful}).  
Consequently, there is a surjective $Z(A)$-module homomorphism
\begin{equation} \label{ZA hom}
\begin{array}{ccc}
e_kAe_j & \twoheadrightarrow & \operatorname{Hom}_{Z(A)}\left(e_jAe_i, e_kAe_i \right)\\
r & \mapsto & (p \mapsto rp).
\end{array}
\end{equation}

To show injectivity, suppose $r,r' \in e_kAe_j$ are sent to the same homomorphism in $\operatorname{Hom}_{Z(A)}\left(e_jAe_i, e_kAe_i \right)$. 
Then for each $p \in e_jAe_i$,
$$rp = r'p.$$
But $e_jAe_i \not = 0$ by assumption (\ref{Aij}).
Whence $r = r'$ since $B$ is an integral domain.
Therefore (\ref{ZA hom}) is an isomorphism.

Similarly, there is a $Z(A)$-module isomorphism
$$\begin{array}{ccc}
e_jAe_k & \stackrel{\sim}{\longrightarrow} & \operatorname{Hom}_{Z(A)}\left(e_iAe_j, e_iAe_k \right)\\
r & \mapsto & (p \mapsto pr).
\end{array}$$
\end{proof}

Again let $A$ be a nonnoetherian homotopy algebra satisfying assumptions (A) and (B).
Furthermore, suppose the arrows in $Q_1^{\operatorname{t}}$ are pairwise coprime.
Fix $a \in Q_1^{\operatorname{t}}$, and consider a simple matching $D \in \mathcal{S}'$ such that $x_D \mid \overbar{a}$.
Set $\mathfrak{q} := \mathfrak{q}_D$ and $i := \operatorname{t}(a)$.

\begin{Lemma} \label{f in m0}
If $j \in Q_0$ is a vertex distinct from $i$ and $f \in \operatorname{Hom}_{\tilde{R}}\left( e_jA_{\mathfrak{q}}e_i, e_iA_{\mathfrak{q}}e_i\right)$, then 
$$\overbar{f(e_jA_{\mathfrak{q}}e_i)} \subseteq \mathfrak{m}_0 \tilde{R}.$$
\end{Lemma}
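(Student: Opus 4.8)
The plan is to reduce, via Lemma \ref{someh}, any such $f$ to multiplication by a single scalar $h\in\operatorname{Frac}B$, to exploit that the source vertex $i=\operatorname{t}(a)$ has indegree $1$, and then to compare the resulting $S_{\mathfrak q}$-submodule of $\tilde R$ against the explicit descriptions of $\tilde R$ and of $\mathfrak m_0$ coming from Propositions \ref{center} and \ref{prime2}.

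First I would pin down the shape of $e_jA_{\mathfrak q}e_i$. Since the arrows of $Q_1^{\operatorname{t}}$ are pairwise coprime, $a$ is the \emph{only} arrow of $Q_1^{\operatorname{t}}$ with $x_D\mid\overline{a}$, so $\epsilon_D=1_A-e_i$ and every vertex $j\neq i$ is a summand of $\epsilon_D$; by Lemma \ref{free}, $e_jA_{\mathfrak q}e_i$ is a cyclic free $S_{\mathfrak q}$-module. Because $\deg^+ i=1$, the unique arrow out of $i$ is $a$, and neither $a$ nor $\delta_a$ is vertex invertible in $A_{\mathfrak q}$ by Lemma \ref{vertex invertible} (as $x_D\mid\overline{a}$); hence every element of $e_jA_{\mathfrak q}e_i$ factors as $c\,a$ with $c\in e_jA_{\mathfrak q}e_{\operatorname{h}(a)}$. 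Choosing a generator of $e_jA_{\mathfrak q}e_{\operatorname{h}(a)}$ whose $\bar{\tau}_{\psi}$-image is coprime to $x_D$ (possible by Lemma \ref{green}, since $\deg^+\operatorname{h}(a)\geq 2$ by assumption (B)), one obtains $\bar{\tau}_{\psi}(e_jA_{\mathfrak q}e_i)=\overline{s_0}\,\overline{a}\,S_{\mathfrak q}$ for a fixed path $s_0$ with $x_D\nmid\overline{s_0}$; in particular, the $\bar{\tau}_{\psi}$-image of every element of $e_jA_{\mathfrak q}e_i$ is divisible by $\overline{a}$, hence by $x_D$, in $B$.

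Next, $Z(A_{\mathfrak q})=\tilde R\,\mathbf{1}$ by Lemma \ref{center2}, so Lemma \ref{someh} applies with the nonzero corners $e_jA_{\mathfrak q}e_i$ and $e_iA_{\mathfrak q}e_j$: there is $h\in\operatorname{Frac}B$ with $\overline{f(p)}=h\overline{p}$ for all $p\in e_jA_{\mathfrak q}e_i$, so $\bar{\tau}_{\psi}(f(e_jA_{\mathfrak q}e_i))=h\,\overline{s_0}\,\overline{a}\,S_{\mathfrak q}$, which must lie in $\bar{\tau}_{\psi}(e_iA_{\mathfrak q}e_i)=A^i_{\mathfrak q}=\tilde R$. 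Identifying $f$ with right multiplication by the corresponding $r\in e_iA_{\mathfrak q}e_j$ (Lemma \ref{endofstory}, using Lemma \ref{someh}), one has $\overline{f(p)}=\overline{r}\,\overline{p}$, and $\overline{r}\,\overline{s_0}\,\overline{a}$ is the $\bar{\tau}_{\psi}$-image of an honest cycle at $i$ passing through $a$ and $\delta_a$, hence lies in $\mathfrak m_a=\bar{\tau}_{\psi}(e_{\operatorname{t}(a)}Aa)$.

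The remaining step is the heart of the lemma: upgrading "$\bar{\tau}_{\psi}(f(e_jA_{\mathfrak q}e_i))\subseteq\tilde R$" to "$\subseteq\mathfrak m_0\tilde R$". Here I would combine the presentation $\tilde R=(k+\mathfrak m_D)_{\mathfrak m_D}+\mathfrak q S_{\mathfrak q}$ of Proposition \ref{center}, the decomposition $\mathfrak m_0=\bigcap_{a'\in Q_1^{\operatorname{t}}}\mathfrak m_{a'}$ of Proposition \ref{prime2}, and the divisibility by $\overline{a}$ established above, to show that the cycle image $\overline{r}\,\overline{s_0}\,\overline{a}$ together with all of its $S_{\mathfrak q}$-multiples lands in the $\tilde R$-ideal $\mathfrak m_0\tilde R$ and not merely in $\mathfrak q S_{\mathfrak q}$. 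I expect this last containment to be the main obstacle: since $\mathfrak m_0\tilde R$ is genuinely smaller than $\mathfrak q S_{\mathfrak q}$, one cannot conclude by the crude estimate $\mathfrak m_aS_{\mathfrak q}=\mathfrak q S_{\mathfrak q}$ of Lemma \ref{airplane}; instead one must track that the image factors through the arrow $a$ into the corner $A^i_{\mathfrak q}$, whose non-unit part is generated over the localized ring $k+\mathfrak m_a$ by $\mathfrak m_a$, and hence by elements that already lie in $\mathfrak m_0$.
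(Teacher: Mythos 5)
Your setup is on the right track — invoking Lemma \ref{someh} to reduce $f$ to multiplication by a single $h\in\operatorname{Frac}B$, using Lemma \ref{free} to see $e_jA_{\mathfrak q}e_i$ as a cyclic free $S_{\mathfrak q}$-module, and observing that (since $\operatorname{deg}^+ i=1$) every element of $e_jA_{\mathfrak q}e_i$ has $\bar\tau_\psi$-image divisible by $\overbar a$ — but the proof is not finished, and the part you flag as ``the main obstacle'' is in fact the whole content of the lemma. The difficulty is exactly the one you gloss over: $h$ may a priori have a pole at $x_D$, so divisibility of $\overbar p$ by $\overbar a$ does \emph{not} by itself force $\overbar{f(p)}=h\overbar p$ to land in the augmentation ideal of $\tilde R$. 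The paper closes this by a contradiction argument with a carefully chosen test element: it supposes some $f(p)$ has a nonzero constant term $c$, picks $t=t'a$ with $t'\in e_jAe_{\operatorname{h}(a)}$, $x_D\nmid\overbar{t'}$, and $t$ not proportional to $p$, and then computes the $x_D$-order of $\overbar t\,\overbar p^{-1}$: on the one hand it must be at least $1$ because it is a nonconstant element of $\bar\tau_\psi(e_iA_{\mathfrak q}e_i)$ (whose nonconstant part lies in $\mathfrak q S_{\mathfrak q}$, by the structure in Proposition \ref{center} and Lemma \ref{vertex invertible}), while on the other hand it is at most $0$ because $x_D\nmid\overbar{t'}$; this uses that $\tilde R$ is a UFD so that the constant $c$ can be stripped off. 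Your sketch contains none of this, and the vague appeal to ``$\mathfrak m_a$-membership'' does not substitute for it: the ideal $\mathfrak m_a S_{\mathfrak q}$ equals $\mathfrak q S_{\mathfrak q}$ (Lemma \ref{airplane}), but you would still need to argue that the element is nonconstant and that its $x_D$-order is strictly positive, which is precisely the point at issue.

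There is also a logical problem with your step that ``identifies $f$ with [multiplication] by the corresponding $r\in e_iA_{\mathfrak q}e_j$ (Lemma \ref{endofstory}).'' Lemma \ref{endofstory} applies only once its hypothesis (\ref{Aij3}) is verified, and in the present context that verification is carried out in Proposition \ref{gameover}, whose proof explicitly \emph{cites Lemma \ref{f in m0}} (in case (ii.a), to rule out $q$ being a trivial path when $i=k\ne j$). Invoking Lemma \ref{endofstory}/Proposition \ref{gameover} here is therefore circular. At the stage of Lemma \ref{f in m0} you only have $h\in\operatorname{Frac}B$ from Lemma \ref{someh}; you do not yet know that $h$ is realized by an element $r$ of the algebra, and your divisibility claim about $\overbar r\,\overbar{s_0}\,\overbar a$ rests on this unavailable fact. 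The paper's proof avoids this by never needing such an $r$: it works directly with the scalar $h$ and the shape of the corner ring $\bar\tau_\psi(e_iA_{\mathfrak q}e_i)$.
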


\begin{proof}
Fix a vertex $j \not = i \in Q_0$ and an $\tilde{R}$-module homomorphism $f: e_jA_{\mathfrak{q}}e_i \to e_iA_{\mathfrak{q}}e_i$. 
We may apply Lemma \ref{someh} to $f$: assumption (\ref{Aij}) holds since there is a path between any two vertices of $Q$, and assumption (\ref{Aij2}) holds by Lemma \ref{center2}. 
Thus there is some $h \in \operatorname{Frac}B$ such that for each $p \in e_jA_{\mathfrak{q}}e_i$, we have 
\begin{equation} \label{f(p)}
\overbar{f(p)} = h \overbar{p}.
\end{equation}

Assume to the contrary that there is some $p \in e_jA_{\mathfrak{q}}e_i$ such that $f(p) = ce_i + q$, where $0 \not = c \in k$ and $\overbar{q} \in \mathfrak{m}_0\tilde{R}$. 
By (\ref{f(p)}),
$$h \overbar{p} = \overbar{f(p)} = c + \overbar{q}.$$
Whence $h = (c + \overbar{q})\overbar{p}^{-1}$.

By assumption (A), there is a path $t' \in e_jAe_{\operatorname{h}(a)}$ such that (i) $x_D \nmid \overbar{t}'$, and (ii) $t'a$ is not a scalar multiple of $p$.
Set $t := t'a$.
Then \begin{equation} \label{not possible}
c\overbar{t}\overbar{p}^{-1} + \overbar{q}\overbar{t}\overbar{p}^{-1} = (c + \overbar{q}) \overbar{t}\overbar{p}^{-1} = h \overbar{t} \stackrel{(\textsc{i})}{=} \overbar{f(t)} \in \tilde{R} \stackrel{\textsc{(ii)}}{=} \bar{\tau}_{\psi}(e_iA_{\mathfrak{q}}e_i),
\end{equation}
where (\textsc{i}) holds by (\ref{f(p)}) and (\textsc{ii}) holds by Lemma \ref{center2}.
Furthermore, $\tilde{R}$ is a unique factorization domain since it is the localization of a subalgebra of the polynomial ring $B$ on a multiplicatively closed subset.
Thus, since $c \not = 0$, (\ref{not possible}) implies
\begin{equation} \label{not possible2}
\overbar{t}\overbar{p}^{-1} \in \bar{\tau}_{\psi}(e_iA_{\mathfrak{q}}e_i).
\end{equation}

Now every element $g \in \bar{\tau}_{\psi}(e_iA_{\mathfrak{q}}e_i)$ is of the form
\begin{equation} \label{dino}
g = d + \sum_{\ell = 1}^m x_D^{n_{\ell}} u_{\ell}v_{\ell}^{-1},
\end{equation}
where $d \in k$, and $u_{\ell},v_{\ell}$ are monomials in $B$ not divisible by $x_D$.
Moreover, for each $\ell$ we have $n_{\ell} \geq 1$, by Lemma \ref{vertex invertible}. 
The element $\overbar{t}\overbar{p}^{-1}$ is of the form (\ref{dino}), with $m \geq 1$ since $t$ is not a scalar multiple of $p$. 
But each $n_{\ell} \leq 0$ since $x_D \nmid \overbar{t}'$, contrary to (\ref{not possible2}).
\end{proof}

\begin{Proposition} \label{gameover} 
For each $j,k \in Q_0$,
\begin{equation*} \label{hom stuff}
\operatorname{Hom}_{\tilde{R}}\left(e_jA_{\mathfrak{q}}e_i, e_kA_{\mathfrak{q}}e_i \right) \cong e_kA_{\mathfrak{q}}e_j \ \ \ \text{ and } \ \ \ \operatorname{Hom}_{\tilde{R}}\left( e_iA_{\mathfrak{q}}e_j, e_iA_{\mathfrak{q}}e_k \right) \cong e_jA_{\mathfrak{q}}e_k.
\end{equation*}
\end{Proposition}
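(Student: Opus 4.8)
The plan is to apply Lemma~\ref{endofstory} to the tiled matrix ring $A_{\mathfrak{q}}$ with distinguished index $i=\operatorname{t}(a)$; its hypotheses (\ref{Aij}) and (\ref{Aij2}) hold since $Q$ is strongly connected and, by Lemma~\ref{center2}, $A^i_{\mathfrak{q}}\mathbf{1}=Z(A_{\mathfrak{q}})$. By Lemma~\ref{endofstory} it then suffices, for each $j,k\in Q_0$, to produce one $p\in e_jA_{\mathfrak{q}}e_i$ (resp.\ $p\in e_iA_{\mathfrak{q}}e_j$) with $f(p)=rp$ (resp.\ $f(p)=pr$) for every $\tilde R$-linear $f\colon e_jA_{\mathfrak{q}}e_i\to e_kA_{\mathfrak{q}}e_i$ (resp.\ $f\colon e_iA_{\mathfrak{q}}e_j\to e_iA_{\mathfrak{q}}e_k$). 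By Lemma~\ref{someh} such an $f$ satisfies $\overbar{f(q)}=h\,\overbar q$ for a fixed $h\in\operatorname{Frac}B$ and is hence $S_{\mathfrak{q}}$-linear; writing $M^{\ell m}:=\bar{\tau}_{\psi}(e_{\ell}A_{\mathfrak{q}}e_m)\subset\operatorname{Frac}B$ and taking $p$ to be a generator of the cyclic $S_{\mathfrak{q}}$-module $e_jA_{\mathfrak{q}}e_i$, the point is to identify $\operatorname{Hom}_{\tilde R}(e_jA_{\mathfrak{q}}e_i,e_kA_{\mathfrak{q}}e_i)=(M^{ki}:M^{ji}):=\{h\in\operatorname{Frac}B:hM^{ji}\subseteq M^{ki}\}$ with $M^{kj}$; then $r:=h\,e_{kj}$ does the job by injectivity of $\bar{\tau}_{\psi}$.

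First I would collect the structural input. Since the arrows of $Q_1^{\operatorname{t}}$ are pairwise coprime and $x_D\mid\overbar a$, the arrow $a$ is the only element of $Q_1^{\operatorname{t}}$ with $x_D$-divisible image, so $\epsilon_D=1_A-e_i$; thus $e_{\ell}\epsilon_D\neq0$ and $e_{\ell}A_{\mathfrak{q}}e_{\ell}=S_{\mathfrak{q}}e_{\ell}$ for every $\ell\neq i$. As $i$ has in- and out-degree $1$, with $a$ the unique arrow out of $i$, $\delta_a$ the unique arrow into $i$ and $\overbar{\delta_a}=1$, every nontrivial path out of $i$ begins with $a$ and every nontrivial path into $i$ ends with $\delta_a$. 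For $j\neq i$, Lemma~\ref{green} gives a path $v\colon\operatorname{h}(a)\to j$ with $x_D\nmid\overbar v$, and since its head $j\neq i$ its first arrow is not $\delta_a$, so $v$ is vertex invertible in $A_{\mathfrak{q}}$ by Lemma~\ref{vertex invertible} (this is the argument in the proof of Theorem~\ref{simples}); hence $\bar{\tau}_{\psi}(e_jA_{\mathfrak{q}}e_{\operatorname{h}(a)})=\overbar v\,S_{\mathfrak{q}}$, and $M^{ji}=\overbar v\,\overbar a\,S_{\mathfrak{q}}$ is a principal, hence invertible, fractional $S_{\mathfrak{q}}$-ideal (compatibly with Lemma~\ref{free}). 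Finally, $S_{\mathfrak{q}}$ is a discrete valuation ring: it is a $1$-dimensional regular local ring by Theorem~\ref{height}.1 and Lemma~\ref{Sq regular}, and its valuation agrees on $\operatorname{Frac}S$ with $v_D$, the $x_D$-adic valuation of $B$, since both are valuations with centre $\mathfrak{q}$ on $S$ and both give $\sigma$ the value $1$.

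Now fix $j\neq i$ — the case $j=i$ being immediate, since $e_iA_{\mathfrak{q}}e_i=\tilde R$ is free of rank one over $\tilde R$ — and let $p$ be a generator of $e_jA_{\mathfrak{q}}e_i$, so $\overbar p=\overbar{va}$. If $k\neq i$, then using the vertex inverse $v^*$ one routes every path $i\to k$ through $j$: for $q\in e_kA_{\mathfrak{q}}e_i$ write $q=q'a$ with $q'\in e_kA_{\mathfrak{q}}e_{\operatorname{h}(a)}$, whence $q=(q'v^*)(va)$ with $q'v^*\in e_kA_{\mathfrak{q}}e_j$; so $M^{ki}=M^{kj}M^{ji}$, and since $M^{ji}$ is invertible and $M^{kj}$ is an $S_{\mathfrak{q}}$-module, $(M^{ki}:M^{ji})=(M^{kj}M^{ji}:M^{ji})=M^{kj}$. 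If $k=i$, the trivial cycle $e_i$ spoils the routing; instead, by Lemma~\ref{f in m0} we have $hM^{ji}\subseteq\mathfrak{m}_0\tilde R\subseteq\mathfrak{m}_0S_{\mathfrak{q}}=\sigma S_{\mathfrak{q}}$ (using Lemma~\ref{airplane}, Proposition~\ref{principal} and exactness of localization), so $h\,\overbar{va}\in\sigma S_{\mathfrak{q}}=\overbar b\,\overbar a\,S_{\mathfrak{q}}$ with $b$ a path for which $ba$ is a unit cycle; cancelling $\overbar a$ gives $h\in\overbar v^{-1}\overbar b\,S_{\mathfrak{q}}$. A short valuation computation then identifies this with $M^{ij}$: both $\overbar b$ and $\overbar{t_2}\,\overbar v$ (where $\overbar{t_2}$ generates $M^{ij}$, via the factor $\delta_a$ with $\overbar{\delta_a}=1$) lie in $\bar{\tau}_{\psi}(e_iA_{\mathfrak{q}}e_{\operatorname{h}(a)})=\overbar s\,S_{\mathfrak{q}}$ with $x_D\nmid\overbar s$, and the resulting quotients $\overbar b/\overbar s$ and $\overbar{t_2}\,\overbar v/\overbar s$ lie in $S_{\mathfrak{q}}$ with $v_D$-value $0$, hence are units of the DVR $S_{\mathfrak{q}}$, so $\overbar v^{-1}\overbar b\,S_{\mathfrak{q}}=\overbar{t_2}\,S_{\mathfrak{q}}=M^{ij}$. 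In all cases $\operatorname{Hom}_{\tilde R}(e_jA_{\mathfrak{q}}e_i,e_kA_{\mathfrak{q}}e_i)=(M^{ki}:M^{ji})=M^{kj}\cong e_kA_{\mathfrak{q}}e_j$, which by Lemma~\ref{endofstory} yields the first isomorphism; the second follows by the symmetric argument with $\delta_a$, the unique arrow into $i$, in place of $a$ and the ``similarly'' clause of Lemma~\ref{endofstory}.

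The main obstacle is the case $k=i$: the target corner $e_iA_{\mathfrak{q}}e_i=\tilde R$ is the one corner that is not a free $S_{\mathfrak{q}}$-module, the naive routing through $j$ fails, and one must instead recognise $M^{ij}$ as the $\tilde R$-dual of the invertible ideal $M^{ji}$ — for which Lemma~\ref{f in m0} (confining the image into $\mathfrak{m}_0\tilde R=\sigma S_{\mathfrak{q}}$) and the discrete valuation ring structure of $S_{\mathfrak{q}}$ are the essential tools. The coprimality hypothesis enters precisely by forcing $\epsilon_D=1_A-e_i$, isolating this single non-free corner and, via the argument of Theorem~\ref{simples}, supplying enough vertex-invertible paths out of $\operatorname{h}(a)$ to carry out the routing when $k\neq i$.
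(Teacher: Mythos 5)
Your proof is correct and takes a genuinely different route from the paper's, so let me compare.

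Both proofs set up the application of Lemma~\ref{endofstory} the same way, verifying hypotheses (\ref{Aij}) and (\ref{Aij2}) via strong connectedness of $Q$ and Lemma~\ref{center2}, and both invoke Lemma~\ref{f in m0} for the delicate case $k=i\neq j$. Where you diverge is in producing the element $r$ with $f(p)=rp$. The paper fixes a path $p\in e_jAe_i$ with $x_D^2\nmid\overbar p$, writes $f(p)$ as a combination of paths $q$, and uses the covering quiver together with the homotopy lemma \cite[Lemma~4.3]{B2} to get a path $s$ with $\overbar{sp}=\overbar q\,\sigma^n$; it then handles $n\le0$ directly and $n=1$ by peeling a unit-cycle factor off $s$ and replacing it with a vertex inverse, using coprimality and Lemma~\ref{vertex invertible}. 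Your argument instead treats the corners $M^{\ell m}=\bar{\tau}_\psi(e_\ell A_{\mathfrak q}e_m)$ as principal fractional $S_{\mathfrak q}$-ideals (via Lemma~\ref{free} applied at $\operatorname{h}(a)$, since $e_i\epsilon_D=0$ blocks a direct application at $i$), identifies $\operatorname{Hom}_{\tilde R}(e_jA_{\mathfrak q}e_i,e_kA_{\mathfrak q}e_i)$ with the conductor $(M^{ki}:M^{ji})$ by Lemma~\ref{someh} and injectivity of $\bar{\tau}_\psi$, and computes that conductor: for $k\ne i$ by routing through the vertex-invertible path $v\colon\operatorname{h}(a)\to j$ to get $M^{ki}=M^{kj}M^{ji}$ and cancelling the invertible factor $M^{ji}$; for $k=i$ by combining Lemma~\ref{f in m0} with the DVR structure of $S_{\mathfrak q}$ (uniformizer $\sigma$, Proposition~\ref{principal}) and the agreement of its valuation with the $x_D$-adic valuation $v_D$ on $\operatorname{Frac}S$, a small point worth making explicit since it rests on $S_{\mathfrak q}$ being maximal among proper subrings of $\operatorname{Frac}S$. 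The paper's argument is closer to the dimer-geometric toolbox the rest of the article uses, while yours is a cleaner commutative-algebra computation once the DVR/principal-ideal structure is in place; both are correct and of comparable length, and both rely on coprimality through $\epsilon_D=1_A-e_i$, which isolates the single non-free corner $e_iA_{\mathfrak q}e_i=\tilde R$.

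One small expository gap: in the case $k\ne i$ you write $q=q'a$ for a general $q\in e_kA_{\mathfrak q}e_i$; since $e_kA_{\mathfrak q}e_i$ is only known to be cyclic over $S_{\mathfrak q}$ (by Lemma~\ref{free} applied with the fixed vertex at $k$), you should factor the cyclic generator rather than a general element, which is what you in effect do when you identify $M^{ki}=\overbar{va}\,M^{kj}\,(\overbar{va})^{-1}\cdot M^{ji}$ via principality. This does not affect correctness.
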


\begin{proof}
Suppose the hypotheses hold.
We claim that $A_{\mathfrak{q}}$ satisfies the assumptions of Lemma \ref{endofstory}, with $i = \operatorname{t}(a)$ and arbitrary $j,k \in Q_0$.

Indeed, assumption (\ref{Aij}) holds since there is a path between any two vertices of $Q$, and assumption (\ref{Aij2}) holds by Lemma \ref{center2}.

To show that the third assumption (\ref{Aij3}) holds, fix $j,k \in Q_0$.
Consider a path $p \in e_jAe_i$ for which $x_D^2 \nmid \overbar{p}$; such a path exists by assumption (A), and since $D$ is a simple matching of $A'$.
Let $f \in \operatorname{Hom}_{\tilde{R}}\left(e_jA_{\mathfrak{q}}e_i, e_kA_{\mathfrak{q}}e_i \right)$ be arbitrary.
We want to show that there is an $r \in e_kA_{\mathfrak{q}}e_j$ such that $f(p) = rp$.

Write $f(p) = \sum_{\ell} c_{\ell} q_{\ell}$ as an $\tilde{R}$-linear combination of paths $q_{\ell} \in e_kAe_i$.
To show that $f(p) = rp$, it suffices to show that for each path $q_{\ell}$, there is a path $r_{\ell}$ such that 
$$q_{\ell} = r_{\ell}p,$$ 
since then we may take $r = \sum_{\ell} c_{\ell} r_{\ell}$.
It therefore suffices to assume that $f(p) = q$ is a single path.

Let $p^+$ and $q^+$ be lifts of $p$ and $q$ to the covering quiver $Q^+$ with coincident tails, $\operatorname{t}(p^+) = \operatorname{t}(q^+) \in Q_0^+$.
Let $s \in e_kAe_j$ be a path for which $s^+$ has no cyclic subpaths in $Q^+$ and
$$\operatorname{t}(s^+) = \operatorname{h}(p^+) \ \ \ \text{ and } \ \ \ \operatorname{h}(s^+) = \operatorname{h}(q^+).$$
Then by \cite[Lemma 4.3]{B2}, there is some $n \in \mathbb{Z}$ such that
$$\overbar{sp} = \overbar{q} \sigma^n.$$

(i) First suppose $n \leq 0$. 
Set 
$$r := \sigma^n_k s.$$
Then $\overbar{rp} = \overbar{q}$.
Thus $rp = q$ since $\bar{\tau}_{\psi}$ is injective.

(ii) So suppose $n \geq 1$; without loss of generality we may assume $n = 1$.

(ii.a) Further suppose $i \not = k$ or $i = k \not = j$.
Then $q$ is a nontrivial path: if $i \not = k$, then $q$ is clearly nontrivial, and if $i = k \not = j$, then $q$ is nontrivial by Lemma \ref{f in m0}.

Since $\deg^+ i = 1$, $x_D$ divides the $\bar{\tau}_{\psi}$-image of each nontrivial path in $Ae_i$.
Whence $x_D \mid \overbar{q}$.
Thus $x_D^2 \mid \overbar{q}\sigma = \overbar{sp}$.
But $x_D^2 \nmid \overbar{p}$ by our choice of $p$.
Therefore $x_D \mid \overbar{s}$.
Consequently, $s$ factors into paths $s = s_3s_2s_1$, where $s_2$ is a subpath of a unit cycle satisfying $x_D \mid \overbar{s}_2$.
Let $b$ be one of the two paths for which $bs_2$ is a unit cycle.
Then $x_D \nmid \overbar{b}$ since $x_D \mid \overbar{s}_2$.
Thus $b$ has vertex inverse 
$$b^* \in e_{\operatorname{t}(s_3)}A_{\mathfrak{q}}e_{\operatorname{h}(s_1)},$$
by Lemma \ref{vertex invertible}.
Set 
$$r := s_3b^*s_1.$$
Then, since $\overbar{b^*} = \overbar{b}^{-1}$, we have
$$\overbar{rp} = \overbar{s_3b^*s_1 p} = \overbar{b}^{-1} \overbar{s}_3\overbar{s}_1 \overbar{p} = \frac{\overbar{s}_2}{\sigma} \overbar{s}_3\overbar{s}_1 \overbar{p} = \frac{\overbar{sp}}{\sigma} = \overbar{q}.$$ 
Therefore $rp = q$ since $\bar{\tau}_{\psi}$ is injective, proving our claim. 

(ii.b) Finally, suppose $i = j = k$.
Then $rp = f(p)$ holds by taking $p = e_i$ and $r = f(e_i)$.
\end{proof}

\begin{Theorem} \label{big theorem 2}
Suppose the arrows in $Q_1^{\operatorname{t}}$ are pairwise coprime.
Let $\mathfrak{q} \in \operatorname{Spec}S$ be a minimal prime over $\mathfrak{q} \cap R = \mathfrak{m}_0$.
Then there is some $i \in Q_0$ for which
$$A_{\mathfrak{q}} \cong \operatorname{End}_{Z(A_{\mathfrak{q}})}(A_{\mathfrak{q}}e_i).$$
Furthermore, $A_{\mathfrak{q}}e_i$ is a reflexive $Z(A_{\mathfrak{q}})$-module.
\end{Theorem}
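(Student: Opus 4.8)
The plan is to exhibit $A_{\mathfrak{q}}$ as the endomorphism ring of the left module $A_{\mathfrak{q}}e_i$ over its own center by assembling the Hom-computations of Proposition \ref{gameover}. Since $\mathfrak{q}$ is minimal over $\mathfrak{m}_0$, Theorem \ref{height}.2 supplies a simple matching $D \in \mathcal{S}'$ with $\mathfrak{q} = \mathfrak{q}_D$ together with an arrow $a \in Q_1^{\operatorname{t}}$ such that $x_D \mid \overbar{a}$; set $i := \operatorname{t}(a)$. By Lemma \ref{center2} we then have $Z(A_{\mathfrak{q}}) = \tilde{R}\mathbf{1} \cong e_iA_{\mathfrak{q}}e_i$, so throughout one identifies $Z(A_{\mathfrak{q}})$-modules with $\tilde{R}$-modules.

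First I would write $A_{\mathfrak{q}}e_i = \bigoplus_{j \in Q_0} e_jA_{\mathfrak{q}}e_i$ as a direct sum of $\tilde{R}$-submodules and combine the standard decomposition of the endomorphism ring of a finite direct sum with Proposition \ref{gameover} (first isomorphism):
$$\operatorname{End}_{Z(A_{\mathfrak{q}})}(A_{\mathfrak{q}}e_i) \;=\; \bigoplus_{j,k \in Q_0} \operatorname{Hom}_{\tilde{R}}\!\left(e_jA_{\mathfrak{q}}e_i,\, e_kA_{\mathfrak{q}}e_i\right) \;\cong\; \bigoplus_{j,k \in Q_0} e_kA_{\mathfrak{q}}e_j \;=\; A_{\mathfrak{q}}.$$
Because each component isomorphism is, by Lemma \ref{endofstory}, implemented by $r \mapsto (p \mapsto rp)$, this composite is precisely the natural ring homomorphism $A_{\mathfrak{q}} \to \operatorname{End}_{Z(A_{\mathfrak{q}})}(A_{\mathfrak{q}}e_i)$ given by left multiplication. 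Its surjectivity is thus exactly the content of Proposition \ref{gameover}; for injectivity I would observe that $A_{\mathfrak{q}}e_i$ is a faithful left $A_{\mathfrak{q}}$-module: if $r \in A_{\mathfrak{q}}$ annihilates it, then for each $j$ it annihilates a nonzero $p_j \in e_jA_{\mathfrak{q}}e_i$ (such a path exists since $Q$ is connected), whose matrix entry $\overbar{p}_j \in \operatorname{Frac}B$ is nonzero, forcing the $j$-th column of $r$ to vanish; ranging over $j$ gives $r = 0$. This establishes $A_{\mathfrak{q}} \cong \operatorname{End}_{Z(A_{\mathfrak{q}})}(A_{\mathfrak{q}}e_i)$.

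For reflexivity, set $M := A_{\mathfrak{q}}e_i$ and $M' := e_iA_{\mathfrak{q}}$, both regarded as $\tilde{R}$-modules via $\tilde{R} \cong e_iA_{\mathfrak{q}}e_i$. Applying Proposition \ref{gameover} with $k = i$ summand by summand gives a $\tilde{R}$-module isomorphism $M^* = \operatorname{Hom}_{\tilde{R}}(M, \tilde{R}) \cong \bigoplus_j e_iA_{\mathfrak{q}}e_j = M'$, realized by $q \mapsto (p \mapsto qp)$; applying the second isomorphism of Proposition \ref{gameover} with $k = i$ gives $(M')^* \cong \bigoplus_j e_jA_{\mathfrak{q}}e_i = M$, realized by $r \mapsto (p \mapsto pr)$. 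Composing, $M^{**} \cong (M')^* \cong M$, and a short chase — writing an element of $M$ as a tuple $(p_j)_j$ and tracking it through the two identifications, using faithfulness to match coefficients — shows that this composite coincides with the canonical evaluation map $M \to M^{**}$, $p \mapsto (\varphi \mapsto \varphi(p))$, since under $M^* \cong M'$ a functional $\varphi$ corresponds to a $q \in M'$ with $\varphi(\,\cdot\,) = q\,(\cdot)$. Hence $A_{\mathfrak{q}}e_i$ is reflexive over $Z(A_{\mathfrak{q}})$.

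Essentially all the substance has been front-loaded into Propositions \ref{center}–\ref{gameover}, so the main point to be careful about here is purely organizational: one must verify that the several abstract Hom-isomorphisms invoked are all realized by the \emph{same} concrete operation (left, respectively right, multiplication inside $A_{\mathfrak{q}}$), so that they glue to the \emph{canonical} maps $A_{\mathfrak{q}} \to \operatorname{End}_{\tilde{R}}(A_{\mathfrak{q}}e_i)$ and $A_{\mathfrak{q}}e_i \to (A_{\mathfrak{q}}e_i)^{**}$, rather than to some a priori noncanonical isomorphisms. Keeping the explicit formula $r \mapsto (p \mapsto rp)$ from Lemma \ref{endofstory} visible at each step is what makes this bookkeeping routine, and with it both assertions of the theorem follow at once.
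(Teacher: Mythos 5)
Your proposal is correct and takes essentially the same approach as the paper's proof: both pick $i = \operatorname{t}(a)$ for the unique $a \in Q_1^{\operatorname{t}}$ with $x_D \mid \overbar{a}$, decompose $A_{\mathfrak{q}}e_i$ into corner pieces, and apply Proposition \ref{gameover} together with Lemma \ref{center2} to identify the resulting Hom-blocks, then use the same two applications of Proposition \ref{gameover} (with $k=i$) for the reflexivity computation. The only difference is cosmetic — you work vertex-by-vertex where the paper groups the summands into $e_i$ and $\epsilon_D = 1_A - e_i$ — and you make explicit two points the paper leaves implicit: that Lemma \ref{endofstory}'s formula $r \mapsto (p \mapsto rp)$ forces the assembled isomorphism to be the canonical (hence ring) map, and that faithfulness of $A_{\mathfrak{q}}e_i$ supplies injectivity.
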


\begin{proof}
Suppose the hypotheses hold.
By Theorem \ref{height}.2, there is some $D \in \mathcal{S}'$ such that $\mathfrak{q} = \mathfrak{q}_D$.
Since the arrows in $Q_1^{\operatorname{t}}$ are pairwise coprime, there is a unique arrow $a \in Q_1^{\operatorname{t}}$ for which $x_D \mid \overbar{a}$.
Set 
$$i := \operatorname{t}(a) \ \ \ \text{ and } \ \ \ \epsilon := \epsilon_D = 1_A - e_i.$$ 
For brevity, denote $\operatorname{Hom}_{\tilde{R}}(-,-)$ by $_{\tilde{R}}(-,-)$.
There are algebra isomorphisms
$$\begin{array}{rcl}
A_{\mathfrak{q}} & \cong & \left[ \begin{matrix} e_iA_{\mathfrak{q}}e_i & e_iA_{\mathfrak{q}}\epsilon \\ \epsilon A_{\mathfrak{q}} e_i & \epsilon A_{\mathfrak{q}} \epsilon \end{matrix} \right] \\
& \stackrel{\textsc{(i)}}{\cong} & \left[ \begin{matrix} _{\tilde{R}}(e_iA_{\mathfrak{q}}e_i, e_iA_{\mathfrak{q}}e_i) &  _{\tilde{R}}( \epsilon A_{\mathfrak{q}}e_i, e_i A_{\mathfrak{q}} e_i) \\ _{\tilde{R}}(e_iA_{\mathfrak{q}}e_i, \epsilon A_{\mathfrak{q}} e_i ) & _{\tilde{R}}(\epsilon A_{\mathfrak{q}}e_i, \epsilon A_{\mathfrak{q}} e_i ) \end{matrix} \right] \\
& \stackrel{\textsc{(ii)}}{\cong} & \operatorname{End}_{Z(A_{\mathfrak{q}})}( e_iA_{\mathfrak{q}}e_i \oplus \epsilon A_{\mathfrak{q}} e_i) \\
& = & \operatorname{End}_{Z(A_{\mathfrak{q}})}(A_{\mathfrak{q}}e_i),
\end{array}$$
where (\textsc{i}) holds by Proposition \ref{gameover} and (\textsc{ii}) holds by Lemma \ref{center2}.

Furthermore, $A_{\mathfrak{q}}e_i$ is a reflexive $Z(A_{\mathfrak{q}})$-module:
$$\begin{array}{rcl}
_{Z(A_{\mathfrak{q}})}( _{Z(A_{\mathfrak{q}})}(A_{\mathfrak{q}}e_i,Z(A_{\mathfrak{q}})),Z(A_{\mathfrak{q}})) & \stackrel{\textsc{(i)}}{=} & ((A_{\mathfrak{q}}e_i,e_iA_{\mathfrak{q}}e_i),e_iA_{\mathfrak{q}}e_i)\\ 
& \stackrel{\textsc{(ii)}}{=} & (e_iA_{\mathfrak{q}}, e_iA_{\mathfrak{q}}e_i) \\
& \stackrel{\textsc{(iii)}}{=} & A_{\mathfrak{q}}e_i,
\end{array}$$
where (\textsc{i}) holds by Lemma \ref{center2}, and (\textsc{ii}), (\textsc{iii}) hold by Proposition \ref{gameover}.
\end{proof}

\begin{Theorem} \label{NCCR theorem}
Let $A$ be a nonnoetherian homotopy algebra satisfying assumptions (A) and (B), and suppose the arrows in $Q_1^{\operatorname{t}}$ are pairwise coprime.
Then $A_{\mathfrak{m}_0}$ is a nonnoetherian NCCR.
\end{Theorem}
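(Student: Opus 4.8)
The plan is to verify, one at a time, the three conditions in Definition \ref{NCCR2} that make $A_{\mathfrak{m}_0}$ a nonnoetherian NCCR, assembling the results already proved; there is essentially no new argument required beyond a small amount of bookkeeping. Recall that the center of $A_{\mathfrak{m}_0} = A\otimes_R R_{\mathfrak{m}_0}$ is $R_{\mathfrak{m}_0}$, which is local, and that its cycle algebra is $SR_{\mathfrak{m}_0}$.

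First, the cycle algebra $SR_{\mathfrak{m}_0}$ is a normal Gorenstein domain by Proposition \ref{ngd}. Second, $A_{\mathfrak{m}_0}$ is cycle regular: by Theorem \ref{big theorem 1} it is a noncommutative desingularization of its center, and a noncommutative desingularization is cycle regular by definition; concretely, for every $\mathfrak{t} \in \operatorname{Spec}(SR_{\mathfrak{m}_0})$ minimal over $\mathfrak{m}_0 R_{\mathfrak{m}_0}$ and every simple module over the corresponding cyclic localization, the global dimension, the projective dimension of the simple, and $\dim (SR_{\mathfrak{m}_0})_{\mathfrak{t}}$ all coincide.

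Third, I would treat the reflexive-endomorphism-ring condition. By Lemma \ref{t} applied with $\mathfrak{p} = \mathfrak{m}_0$, together with Claim (i) in the proof of Proposition \ref{ngd}, a prime $\mathfrak{t}$ of $SR_{\mathfrak{m}_0}$ minimal over $\mathfrak{m}_0 R_{\mathfrak{m}_0}$ satisfies $(SR_{\mathfrak{m}_0})_{\mathfrak{t}} = S_{\mathfrak{q}}$, where $\mathfrak{q} := \mathfrak{t}\cap S$ is a prime of $S$ minimal over $\mathfrak{m}_0$; consequently the cyclic localization $(A_{\mathfrak{m}_0})_{\mathfrak{t}}$ is the cyclic localization $A_{\mathfrak{q}}$, and the two have the same center. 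By Theorem \ref{height}.2 such a $\mathfrak{q}$ equals $\mathfrak{q}_D$ for some simple matching $D \in \mathcal{S}'$, and $\mathfrak{q}$ is minimal over $\mathfrak{q}\cap R = \mathfrak{m}_0$. Now the coprimality hypothesis on $Q_1^{\operatorname{t}}$ lets me apply Theorem \ref{big theorem 2}: there is a vertex $i \in Q_0$ with $A_{\mathfrak{q}} \cong \operatorname{End}_{Z(A_{\mathfrak{q}})}(A_{\mathfrak{q}}e_i)$, and $A_{\mathfrak{q}}e_i$ is a reflexive $Z(A_{\mathfrak{q}})$-module. This is exactly the third condition, so the theorem follows.

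The only point requiring care — the one place where a careless argument could slip — is the identification in the third step of the cyclic localization of $A_{\mathfrak{m}_0}$ at $\mathfrak{t}$ with $A_{\mathfrak{q}}$ itself, not merely up to Morita equivalence, so that the property ``endomorphism ring of a reflexive module over its center'' transports verbatim. This is forced by $(SR_{\mathfrak{m}_0})_{\mathfrak{t}} = S_{\mathfrak{q}}$ together with the definition of the cyclic localization as the subalgebra of $M_{|Q_0|}(\operatorname{Frac}B)$ generated by the diagonal corners localized at the relevant corner-primes: passing first to $A\otimes_R R_{\mathfrak{m}_0}$ only enlarges each corner $\bar{\tau}_{\psi}(e_iAe_i)$ inside $S_{\mathfrak{q}}$, and these corners are in any case replaced by their localizations once $\mathfrak{q}$ is inverted. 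With this identification recorded, the remainder is a direct citation of Propositions \ref{ngd} and Theorems \ref{big theorem 1}, \ref{big theorem 2}.
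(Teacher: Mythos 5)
Your proof is correct and follows essentially the same route as the paper's: citing Proposition \ref{ngd} for the cycle algebra, Theorem \ref{big theorem 1} for cycle regularity, and Theorem \ref{big theorem 2} for the reflexive endomorphism ring condition. The extra bookkeeping you supply in the third step — identifying the cyclic localization of $A_{\mathfrak{m}_0}$ at $\mathfrak{t} \in \operatorname{Spec}(SR_{\mathfrak{m}_0})$ with $A_{\mathfrak{q}}$ for $\mathfrak{q} = \mathfrak{t}\cap S$, via Claim (i) of Proposition \ref{ngd} and Lemma \ref{t} — is a sound and slightly more explicit treatment of a transition the paper leaves implicit.
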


\begin{proof}
$A_{\mathfrak{m}_0}$ is nonnoetherian and an infinitely generated module over its nonnoetherian center by \cite[Section 3]{B6}; has a normal Gorenstein cycle algebra $SR_{\mathfrak{m}_0}$ by Proposition \ref{ngd}; is cycle regular by Theorem \ref{big theorem 1}; and for each prime $\mathfrak{q} \in \operatorname{Spec}(SR_{\mathfrak{m}_0})$ minimal over $\mathfrak{m}_0$, the cyclic localization $A_{\mathfrak{q}}$ is an endomorphism ring of a reflexive $Z(A_{\mathfrak{q}})$-module by Theorem \ref{big theorem 2}.
\end{proof}

\subsection{Examples} \label{Examples}

\begin{Example} \label{first example} \rm{
Set
$$B := k\left[x,y,z,w \right], \ \ \ \ S := k\left[ xz, yz, xw, yw \right] \cong k\left[ a,b,c,d \right]/(ad - bc),$$
and
$$I := (x,y)S, \ \ \ \ J := (z,w)S, \ \ \ \ \mathfrak{m}_0 := zI, \ \ \ \ R := k + \mathfrak{m}_0.$$
Consider the contraction of homotopy algebras given in Figure \ref{first figure}.
Each arrow is labeled by its $\bar{\tau}_{\psi}$/$\bar{\tau}$-image in $B$.
The center and cycle algebra of $A$ are $R$ and $S$ respectively. 

In this example, the maximal ideal $\mathfrak{m}_0 \in \operatorname{Max}R$ at the origin is a height one prime ideal of $S$.\footnote{Note that the ideals $xzS$ and $yzS$, each of which is properly contained in $zI$, are not prime since $(xw) \cdot (yz) \in xzS$ and $(xz) \cdot (yw) \in yzS$.}
Therefore $\mathfrak{m}_0$ itself is the only minimal prime of $S$ over $\mathfrak{m}_0$. 
Furthermore, the cyclic localization of $A$ at $\mathfrak{m}_0$ is
$$A_{\mathfrak{m}_0} = \left\langle \left[ \begin{matrix} S_{\mathfrak{m}_0} & I & zI \\ 
J & S_{\mathfrak{m}_0} & zS \\
S & I & R_{\mathfrak{m}_0} \end{matrix} \right] \right\rangle = \left[ \begin{matrix} 
S_{\mathfrak{m}_0} & IS_{\mathfrak{m}_0} & zIS_{\mathfrak{m}_0} \\ 
JS_{\mathfrak{m}_0} & S_{\mathfrak{m}_0} & zS_{\mathfrak{m}_0} \\
S_{\mathfrak{m}_0} & IS_{\mathfrak{m}_0} & R_{\mathfrak{m}_0} + \mathfrak{m}_0S_{\mathfrak{m}_0}\end{matrix} \right],$$
with center $Z(A_{\mathfrak{m}_0}) \cong R_{\mathfrak{m}_0} + \mathfrak{m}_0S_{\mathfrak{m}_0}$.
}\end{Example}

\begin{figure}
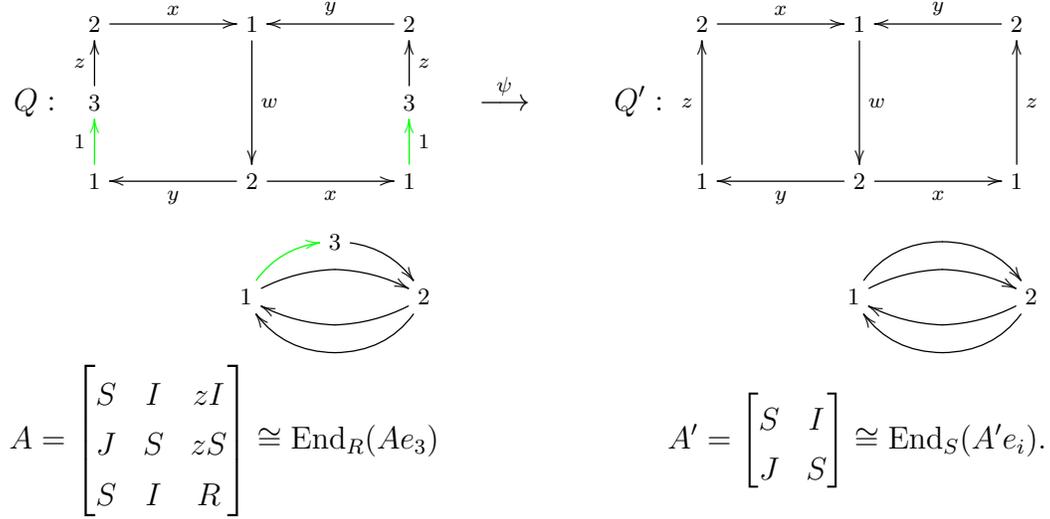

\begin{align*}
Q: \xy 0;/r.31pc/:
(-16,8)*+{\text{\scriptsize{$2$}}}="1";
(0,8)*+{\text{\scriptsize{$1$}}}="2";
(16,8)*+{\text{\scriptsize{$2$}}}="3";
(-16,-8)*+{\text{\scriptsize{$1$}}}="4";
(0,-8)*+{\text{\scriptsize{$2$}}}="5";
(16,-8)*+{\text{\scriptsize{$1$}}}="6";
(-16,0)*+{\text{\scriptsize{$3$}}}="7";
(16,0)*+{\text{\scriptsize{$3$}}}="8";
{\ar^x"1";"2"};{\ar_y"3";"2"};
{\ar^y"5";"4"};{\ar_x"5";"6"};
{\ar@[green]^1"4";"7"};{\ar^z"7";"1"};
{\ar@[green]_1"6";"8"};{\ar_z"8";"3"};
{\ar^w"2";"5"};
\endxy
& \ \ \ \stackrel{\psi}{\longrightarrow} \ \ \ &
Q': \xy 0;/r.31pc/:
(-16,8)*+{\text{\scriptsize{$2$}}}="1";
(0,8)*+{\text{\scriptsize{$1$}}}="2";
(16,8)*+{\text{\scriptsize{$2$}}}="3";
(-16,-8)*+{\text{\scriptsize{$1$}}}="4";
(0,-8)*+{\text{\scriptsize{$2$}}}="5";
(16,-8)*+{\text{\scriptsize{$1$}}}="6";
{\ar^x"1";"2"};{\ar_y"3";"2"};
{\ar^y"5";"4"};{\ar_x"5";"6"};
{\ar^z"4";"1"};
{\ar_z"6";"3"};
{\ar^w"2";"5"};
\endxy
\\
\ \ \ \ \ \ \ \xy 0;/r.35pc/:
(-8,0)*+{\text{\scriptsize{$1$}}}="1";
(8,0)*+{\text{\scriptsize{$2$}}}="2";
(0,5)*+{\text{\scriptsize{$3$}}}="3";
(0,2.5)*{}="4";
(0,-2.5)*{}="5";
(0,-5)*{}="6";
{\ar@/^/@[green]"1";"3"};
{\ar@/^/"3";"2"};
{\ar@{-}@/^.2pc/"1";"4"};
{\ar@/^.2pc/"4";"2"};
{\ar@{-}@/^.2pc/"2";"5"};
{\ar@/^.2pc/"5";"1"};
{\ar@{-}@/^/"2";"6"};
{\ar@/^/"6";"1"};
\endxy
&&
\ \ \ \ \ \ \ \xy 0;/r.35pc/:
(-8,0)*+{\text{\scriptsize{$1$}}}="1";
(8,0)*+{\text{\scriptsize{$2$}}}="2";
(0,5)*{}="3";
(0,2.5)*{}="4";
(0,-2.5)*{}="5";
(0,-5)*{}="6";
{\ar@{-}@/^/"1";"3"};
{\ar@/^/"3";"2"};
{\ar@{-}@/^.2pc/"1";"4"};
{\ar@/^.2pc/"4";"2"};
{\ar@{-}@/^.2pc/"2";"5"};
{\ar@/^.2pc/"5";"1"};
{\ar@{-}@/^/"2";"6"};
{\ar@/^/"6";"1"};
\endxy
\\
A = \left[ \begin{matrix}
S & I & zI \\ 
J & S & zS \\
S & I & R
\end{matrix} \right] \cong \operatorname{End}_R( Ae_3)
& & A' = \left[ \begin{matrix} S & I \\ J & S \end{matrix} \right] \cong \operatorname{End}_S(A'e_i).
\end{align*}
\caption{(Example \ref{first example}.)  The homotopy algebra $A$ is a nonnoetherian NCCR.
The quivers $Q$ and $Q'$ on the top line are each drawn on a torus, and the contracted arrow of $Q$ is drawn in green.}
\label{first figure}
\end{figure}

\begin{Example} \label{second example} \rm{
Set
$$B := k\left[x,y,z,w \right], \ \ \ \ S := k\left[ xz, yz, xw, yw \right],$$
and
$$I := (x,y)S, \ \ \ \ J := (z,w)S, \ \ \ \ \mathfrak{m}_0 := zwI^2, \ \ \ \ R := k + \mathfrak{m}_0.$$
Consider the contraction of homotopy algebras given in Figure \ref{second figure}.
As in Example \ref{first example}, the center and cycle algebra of $A$ are $R$ and $S$ respectively. 

The minimal primes in $S$ over $\mathfrak{m}_0$ are
$$\mathfrak{q}_1 := zI \ \ \ \text{ and } \ \ \ \mathfrak{q}_2 := wI,$$
each of height 1.
The cyclic localizations of $A$ at $\mathfrak{q}_1$ and $\mathfrak{q}_2$ are
$$A_{\mathfrak{q}_1} = \left[ \begin{matrix}
S_{\mathfrak{q}_1} & IS_{\mathfrak{q}_1} & \mathfrak{q}_1S_{\mathfrak{q}_1} & S_{\mathfrak{q}_1} \\ 
wS_{\mathfrak{q}_1} & S_{\mathfrak{q}_1} & zS_{\mathfrak{q}_1} & w S_{\mathfrak{q}_1} \\
S_{\mathfrak{q}_1} & IS_{\mathfrak{q}_1} & (k+\mathfrak{q}_1)_{\mathfrak{q}_1} + \mathfrak{q}_1S_{\mathfrak{q}_1} & S_{\mathfrak{q}_1}\\
S_{\mathfrak{q}_1} & IS_{\mathfrak{q}_1} & \mathfrak{q}_1S_{\mathfrak{q}_1} & S_{\mathfrak{q}_1}
\end{matrix} \right] \cong \operatorname{End}_{Z(A_{\mathfrak{q}_1})}(A_{\mathfrak{q}_1}e_3)$$
and
$$A_{\mathfrak{q}_2} = \left[ \begin{matrix}
S_{\mathfrak{q}_2} & I S_{\mathfrak{q}_2} & S_{\mathfrak{q}_2} & \mathfrak{q}_2S_{\mathfrak{q}_2} \\
zS_{\mathfrak{q}_2} & S_{\mathfrak{q}_2} & zS_{\mathfrak{q}_2} & wS_{\mathfrak{q}_2} \\
S_{\mathfrak{q}_2} & IS_{\mathfrak{q}_2} & S_{\mathfrak{q}_2} & \mathfrak{q}_2S_{\mathfrak{q}_2} \\
 S_{\mathfrak{q}_2} & IS_{\mathfrak{q}_2} & S_{\mathfrak{q}_2} & (k+ \mathfrak{q}_2)_{\mathfrak{q}_2} + \mathfrak{q}_2S_{\mathfrak{q}_2}
 \end{matrix} \right] \cong \operatorname{End}_{Z(A_{\mathfrak{q}_2})}(A_{\mathfrak{q}_2}e_4),$$
with respective centers 
$$Z(A_{\mathfrak{q}_1}) \cong (k+ \mathfrak{q}_1)_{\mathfrak{q}_1} + \mathfrak{q}_1S_{\mathfrak{q}_1} \ \ \ \ \text{ and } \ \ \ \ 
Z(A_{\mathfrak{q}_2}) \cong (k+ \mathfrak{q}_2)_{\mathfrak{q}_2} + \mathfrak{q}_2S_{\mathfrak{q}_2}.$$
(Note that $wS_{\mathfrak{q}_1} = JS_{\mathfrak{q}_1}$ since $z = w \, \frac{xz}{xw}$, and similarly $zS_{\mathfrak{q}_2} = JS_{\mathfrak{q}_2}$.)
In contrast to Example \ref{first example}, $A$ itself is not an endomorphism ring, although its cyclic localizations are.
}\end{Example}

\ \\
\textbf{Acknowledgments.}  The author would like to thank Kenny Brown for useful discussions, and an anonymous referee for helpful comments. 
This article was completed while the author was a research fellow at the Heilbronn Institute for Mathematical Research at the University of Bristol. 

\bibliographystyle{hep}
\def\cprime{$'$} \def\cprime{$'$}

\end{document}